\DeclareMathAlphabet{\mathcal}{OMS}{cmsy}{m}{n} 
\definecolor{DarkPurple}{rgb}{0.40,0.0,0.20}
\newcommand{\Aut}{\operatorname{Aut}}
\newcommand{\N}{\mathbb{N}}
\newcommand{\A}{\mathcal{A}}
\newcommand{\X}{\mathcal{X}}
\newcommand{\Cred}{C_{r}^{*}}
\newcommand{\Lip}{\operatorname{Lip}}
\newcommand{\CC}{{\mathbb{C}}}
\newcommand{\vertiii}[1]{{\left\vert\kern-0.25ex\left\vert\kern-0.25ex\left\vert #1 
		\right\vert\kern-0.25ex\right\vert\kern-0.25ex\right\vert}}
\newcommand{\Bvert}[1]{{\Big\vert\kern-0.25ex\Big\vert\kern-0.25ex\Big\vert #1 
		\Big\vert\kern-0.25ex\Big\vert\kern-0.25ex\Big\vert}}
\newcommand{\bvert}[1]{{\big\vert\kern-0.25ex\big\vert\kern-0.25ex\big\vert #1 
		\big\vert\kern-0.25ex\big\vert\kern-0.25ex\big\vert}}
\newcommand{\nvert}[1]{{\vert\kern-0.25ex\vert\kern-0.25ex\vert #1 
		\vert\kern-0.25ex\vert\kern-0.25ex\vert}}
\renewcommand{\leq}{\leqslant}
\renewcommand{\geq}{\geqslant}
\newcommand{\cd}{\cdot}
\newcommand{\ot}{\otimes}
\newcommand{\otm}{\otimes_{\mathrm{min}}}
\newcommand{\hot}{\widehat \otimes}
\newcommand{\op}{\oplus}
\newcommand{\ci}{\circ}
\newcommand{\ti}{\times}
\newcommand{\nn}{\mathbb{N}}
\newcommand{\zz}{\mathbb{Z}}
\newcommand{\al}{\alpha}
\newcommand{\be}{\beta}
\newcommand{\ga}{\gamma}
\newcommand{\Ga}{\Gamma}
\newcommand{\de}{\delta}
\newcommand{\ep}{\varepsilon}
\newcommand{\io}{\iota}
\newcommand{\la}{\lambda}
\newcommand{\Na}{\nabla}
\newcommand{\si}{\sigma}
\newcommand{\te}{\theta}
\newcommand{\ze}{\zeta}
\newcommand{\pa}{\partial}
\newcommand{\C}[1]{\mathcal{#1}}
\newcommand{\T}[1]{\text{#1}}
\newcommand{\B}[1]{\mathbb{#1}}
\newcommand{\R}{\mathrm}
\newcommand{\fork}[2]{\left\{ \begin{array}{#1} #2 \end{array} \right.}
\newcommand{\ma}[2]{\left(\begin{array}{#1} #2 \end{array} \right)}
\newcommand{\lrar}{\Leftrightarrow}
\newcommand{\su}{\subseteq}
\newcommand{\wit}{\widetilde}
\newcommand{\inn}[1]{\langle #1 \rangle}
\newcommand{\binn}[1]{\big\langle #1 \big\rangle}
\newcommand{\sem}{\setminus}
\newcommand{\ralg}{\rtimes_\alg}
\newcommand{\alg}{{\operatorname{alg}}}
\newtheorem{lemma}{Lemma}[section]
\newtheorem{theorem}[lemma]{Theorem}
\newtheorem{proposition}[lemma]{Proposition}
\theoremstyle{definition}
\newtheorem{definition}[lemma]{Definition}
\newtheorem{example}[lemma]{Example}
\newtheorem{remark}[lemma]{Remark}
\declaretheorem[style=theorem,name={Theorem}]{theoremletter}
\newtheorem{introcorollary}[theoremletter]{Corollary}
\title[]{Quantum metrics on crossed products with groups of polynomial growth}
\date{\today}
\author[1]{Are Austad}
\address{Are Austad, Department of Mathematics, University of Oslo, P.O. Box 1053 Blindern, N-0316 Oslo, Norway}
\email{areaus@math.uio.no}
\author[2]{Jens Kaad}
\address{David Kyed, Department of Mathematics and Computer Science, University of Southern Denmark, Campusvej 55, DK-5230 Odense M, Denmark}
\email{kaad@imada.sdu.dk}
\author[1]{David Kyed}
\address{David Kyed, Department of Mathematics and Computer Science, University of Southern Denmark, Campusvej 55, DK-5230 Odense M, Denmark}
\email{dkyed@imada.sdu.dk}
\keywords{Crossed products, compact quantum metric spaces, spectral triples }
\subjclass[2020]{Primary: 58B34; Secondary: 47L65, 46L55} 
\numberwithin{equation}{section} 
\begin{document}
	
	\maketitle
	\begin{abstract}
We show how to equip the crossed product between a group of polynomial growth and a compact quantum metric space with {a compact quantum metric space structure}. When the quantum metric on the base space arises from a spectral triple, {which is compatible with the action of the group}, we furthermore show that the crossed product becomes a spectral metric space. 
Lastly, we analyse {the spectral triple} at the crossed product level from the point of view of unbounded $KK$-theory and show that it {arises} as an internal Kasparov product of unbounded Kasparov modules. 
\end{abstract}
\tableofcontents	
	\section{Introduction}
%
Rieffel's theory of compact quantum metric spaces provides an elegant framework for studying non-commutative geometry from a metric point view. There is by now an extensive body of work on quantum metric spaces, and the theory has provided a mathematical foundation for rigorously proving heuristic approximation results from physics -- the seminal result in this direction  that ``matrix algebras converge to the sphere'' was proven by Rieffel in \cite{Rie:MSG}. The early years of compact quantum metric spaces have been dominated by {a} case-by-case analysis of interesting examples and only a limited amount of tools are available for constructing new compact quantum metric spaces out of old ones.  
However, research into such general constructions  is currently gaining momentum, and in recent years there has been an increased focus on the non-commutative (metric) geometry of crossed products \cite{CMRV2008, HSWZ:STC,BMR:DSS, KK:DCQ, Klisse2023}.
The present paper continues this line of research, by showing how recent approximation techniques from \cite{KaadKyed2022} and \cite{Kaad2023} can be employed to treat crossed products with finitely generated groups of polynomial growth, thus providing the first results about the compact quantum metric structure of crossed products beyond the realm of (virtually) abelian groups. \\

The theory of compact quantum metric spaces is strongly related to Connes' non-commutative geometry, and one of the goals of the present paper is to {emphasise} this {relationship} in the particular case of crossed product algebras.  In vague terms, the problem that one encounters in the non-commutative differential geometric setting is the following: how to fuse a spectral triple on a group $C^*$-algebra with a spectral triple on a {$C^*$-algebra} acted upon by the group, to obtain a spectral triple on the resulting crossed product.   
This question has been investigated in several papers \cite{HSWZ:STC,BMR:DSS,CMRV2008, Pat:CST}, and to obtain a satisfying answer, one needs to enforce additional assumptions on the action.

More formally, the starting point of the investigation is a {countable} discrete group $\Gamma$ equipped with a proper length function  $l\colon \Gamma \to \B R$, from which one obtains a spectral triple  $(\B C\Gamma, \ell^2(\Gamma), D_l)$ {on the reduced group $C^*$-algebra} $\Cred(\Gamma)$. The Dirac operator $D_l$ is the closure of the essentially selfadjoint operator $\mathcal{D}_l\colon C_c(\Gamma) \to \ell^2(\Gamma)$ defined as $\mathcal{D}_l(\delta_s):=l(s)\delta_s$ for all $s\in \Gamma$. 
This construction dates back to the seminal paper by Connes \cite{Con:CFH}, but to {increase the flexibility we are in this paper working in the more general context} of matrix-valued length functions $l\colon \Gamma \to M_n( \B C)$ (see \cref{d:length}), so that the Dirac operator acts on {$\ell^2(\Gamma) \ot \B C^{\oplus n}$. Indeed, without this added flexibility of matrix-valued length functions, it would be more difficult to tackle basic examples such as the standard spectral triple on the noncommutative $2$-torus, as illustrated in \cref{ss:length}. 
	Given an action $\al$ of $\Ga$ on a $C^*$-algebra $A$ we show how to construct an unbounded Kasparov module $( A \ralg \Ga, \ell^2(\Ga,A) \ot \B C^n, D_l)$ which connects the reduced crossed product $A \rtimes \Ga$ to the base $C^*$-algebra $A$.}

{Consider now a unital spectral triple $(\C A, H, D)$ {on the $C^*$-algebra $A$}, which is still endowed with the action $\al$ of $\Ga$. If} the action is \emph{smooth}, i.e.~preserves the {unital $*$-subalgebra $\A \su A$}, then the {algebraic crossed product} $\C A \rtimes_{\textrm{alg}}\Gamma$ {forms} a natural dense $*$-subalgebra in $A\rtimes\Gamma$.  Hence, the natural problem that arises at this point is the following:
\begin{center}
\emph{Can we combine {the Dirac operators} $D_l$ and $D$ into a single Dirac operator for the crossed product?}
 \end{center}
The natural candidate is the so-called tensor sum $D_l\times_{\Na} D$, which acts on a Hilbert space $G$, which is either $\ell^2(\Gamma, H^{\oplus n})$ or $\ell^2(\Gamma, H^{\oplus n})^{\oplus 2}$ depending on the {parities} of the spectral triples involved; see \cref{ss:triples-on-crossed} for the relevant definitions. {An investigation of commutators between the tensor sum and elements in the algebraic crossed product does however reveal that an extra condition on the action of $\Gamma$ has to be satisfied. More precisely, letting $d \colon \C A \to \B L(H)$ denote the derivation coming from the spectral triple $(\C A,H,D)$ it has to be assumed that} the action is \emph{metrically equicontinuous}, meaning that $\sup_{g\in \Gamma}\big\| d(\alpha_g(a)) \big\| <\infty$. Under this extra assumption, it holds that $(\C A\rtimes_{\alg} \Gamma, G, D_l\times_{\Na} D )$ is a {unital} spectral triple {on the reduced crossed product}; see \cite[Theorem 2.7]{HSWZ:STC}. {In fact, in \cref{sec:unbounded-KK} we shall see that this spectral triple can be viewed as the unbounded Kasparov product of the unbounded Kasparov module $( \C A \ralg \Ga, \ell^2(\Ga,A^{\op n}), D_l)$ and the unital spectral triple $(\C A,H,D)$; see also the end of the introduction for some further preliminary discussion of this result.} \\

Turning instead to the non-commutative \emph{metric} approach, one passes from the spectral triple $(\C A,H,D)$ to the pair $(\A, L_D)$ where $L_{D}$ denotes the seminorm on $\C A$ defined as {$L_D(a):=\|d(a)\|$}. Rieffel's fundamental insight is, that in order to capture a non-commutative analogue of a metric on a compact space, one needs to require that the \emph{Monge-Kantorovi\v{c} metric}
\[
\rho_D(\mu,\nu):= \sup\{|\mu(a)-\nu(a)| \mid a\in \A, \ L_D(a)\leq 1\}
\]
metrises the weak$^*$ topology on the state space $S(A)$. In this case, $(\C A, H, D)$  is referred to as a \emph{spectral metric space}.

These considerations {can be made substantially more general} by replacing $(\C A,L_D)$ with a pair consisting of an operator system  $\X$ and a seminorm $L_\X$ whose associated Monge-Kantorovi\v{c} metric metrises the weak$^*$ topology on the state space $S(\X)$. {In this way, one} arrives at Rieffel's notion of a \emph{compact quantum metric space} \cite{Rie:MSS, Rie:MSA, Rie:GHD, Rie:CQM} {(formulated for operator systems).} The theory of compact quantum metric spaces has been extensively developed by Rieffel himself, with important contributions from Li \cite{Li:DCQ, Li:ECQ, Li:GH-dist} and an impressive body  of work centred around Latr\'{e}moli\`ere \cite{Lat:AQQ, Lat:QGH, Lat:DGH, Lat:GPS, Lat:MGP, LatPack:Solenoids, LatAgu:AF, Lat:Compactness-theorem} whose approach is based on $C^*$-algebras rather than operator systems. To align with current trends in non-commutative geometry  \cite{walter:GH-convergence, walter-connes:truncations, CvS:Truncations, GvS:TolFin, walter-tori, Rie:Fourier-truncations}, we have {opted for the general setting} of operator systems in the present text, and our operator systems will always be concrete, in the sense that they come equipped with an inclusion into a fixed $C^*$-algebra.

So if $\X \subseteq A$ is an operator system, one may again consider an action of $\Gamma$ on $A$ which preserves the given operator system $\X$. In this situation, we naturally obtain a new operator system $\X \rtimes_\alg\Gamma$ sitting inside the reduced crossed product $A \rtimes\Gamma$. If $(\X,L_{\C X})$ is {a compact quantum metric space} and if {$l \colon \Ga \to M_n(\B C)$ is a proper matrix length function such that $(\B C \Ga, L_{D_l})$ is a compact quantum metric space,} the natural question is:
\begin{center}
\emph{{Can we} fuse the seminorm $L_{D_l}$ and the seminorm $L_\X$ to obtain a compact quantum metric space structure on the crossed product operator system $\X \rtimes_\alg\Gamma$?}
\end{center}
In the special situation where $\Gamma$ is the group of integers, positive results {were} obtained in \cite{KK:DCQ} in the context of abstract compact quantum metric spaces, and in \cite{HSWZ:STC} and \cite{BMR:DSS} in the context of spectral metric spaces.  The results from \cite{HSWZ:STC} and \cite{BMR:DSS} were recently generalised to  virtually abelian groups in \cite{Klisse2023}, but beyond this not much seems to be known about the quantum metric structure of crossed products.  Moreover, the techniques applied in the papers \cite{KK:DCQ, BMR:DSS, HSWZ:STC, Klisse2023} do not immediately extend to treat more general groups; see e.g.~\cite[Section 2]{HSWZ:STC} for a discussion of these limitations. \\

Our first theorem generalises \cite[Theorem A]{KK:DCQ}, by providing a {positive answer to our question} in the situation where $\Gamma$ is just assumed to be amenable. {In order to state our first theorem properly, we spend some time introducing two interesting seminorms on the crossed product operator system $\C X \ralg \Ga$.

The first seminorm $L_l$ on $\C X \ralg \Ga$ comes from the unbounded Kasparov module $( A \ralg \Ga, \ell^2(\Ga,A) \ot \B C^n, D_l)$, meaning that $L_l(x)$ agrees with the operator norm of the closure of the commutator $[D_l,x]$.

The second seminorm $L_H$ needs a bit more preparation. Recall that a norm $\nvert{-}$ on the compactly supported functions $C_c(\Gamma, \CC)$ is called \emph{order-preserving} if $\nvert{f} \leq \nvert{g}$ whenever $0\leq f(s) \leq g(s)$ for all $s\in \Gamma$. Natural examples  of such norms include  all $p$-norms for $p\in {[1,\infty]}$. Applying such an order-preserving norm, we may lift a seminorm $L_{\C X} \colon \C X \to [0,\infty)$ to a seminorm $L_H$ on the operator system $\C X \ralg \Ga$. Indeed, identifying $\C X \ralg \Ga$ with the compactly supported maps $C_c(\Ga,\C X)$ we have a well-defined composition $L_H(x) := \nvert{L_{\C X}\ci x}$.}


 	\begin{theoremletter}\label{thm:main-theorem-crossed}
	Let $\Ga$ be an amenable, countable discrete group endowed with a proper matrix-valued length function $l\colon \Gamma \to M_n(\CC)$ {and let $\nvert{\cdot }$ be an order-preserving norm on $C_c(\Gamma, \CC)$.}  {Let moreover $\C X \su A$ be an operator system which is preserved by an action of $\Ga$ on the unital $C^*$-algebra $A$. Suppose now that $(\X,L_\X)$ and $(\CC \Ga, L_{D_l})$ are compact quantum metric spaces. Then the seminorm} $L\colon \X\rtimes_{\alg} \Gamma \to [0,\infty)$ defined as
		\begin{equation}\label{eq:intro-defi-of-L}
			L(x) := \max\big\{{L_l(x), \ L_H(x), \  L_H(x^*)}  \big\},
		\end{equation}
	provides $\X\rtimes_\alg \Gamma$ with {the structure of a compact quantum metric space}. 
	\end{theoremletter}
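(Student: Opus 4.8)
The plan is to verify Rieffel's characterisation of a compact quantum metric space: one must check that $L$ is a lower semicontinuous seminorm on $\X\ralg\Ga$ whose kernel is exactly $\CC 1$, and, decisively, that the image of the unit ball $\{x\in\X\ralg\Ga : L(x)\le 1\}$ in the quotient $(\X\ralg\Ga)/\CC 1$ is totally bounded. Since $L$ is a maximum of the three seminorms $L_l$, $L_H(\cdot)$ and $L_H((\cdot)^*)$, it is automatically a seminorm with $L(1)=0$, and its lower semicontinuity is routine (each constituent being a closed commutator seminorm or a norm composed with $L_\X$). For the kernel I would argue as follows: if $L(x)=0$ then $L_l(x)=0$ forces the closed commutator $[\overline{D_l},x]$ to vanish, and since $l$ is proper, with $l(s)\neq l(e)$ for every $s\neq e$, this annihilates every group coefficient of $x$ except the one at the identity, so that $x=x_e\in\X$. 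The remaining condition $L_H(x)=0$ then reads $L_\X(x_e)=0$, whence $x_e\in\CC 1$ because $(\X,L_\X)$ is a compact quantum metric space; thus $x\in\CC 1$.

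The heart of the argument is total boundedness, and here I would exploit amenability together with the approximation machinery of \cite{KaadKyed2022,Kaad2023}. First I would introduce a sequence of \emph{Fej\'er}-type unital completely positive Schur multipliers $\Phi_N$ on $\X\ralg\Ga$, acting by $\Phi_N\big(\sum_s x_s u_s\big)=\sum_s h_N(s)\,x_s u_s$ for suitable finitely supported positive-definite functions $h_N\colon\Ga\to[0,1]$. Amenability, through a F\o lner/Reiter sequence, lets one choose the $h_N$ to converge pointwise to $1$ while controlling their variation along the length function, and this is exactly the input that transfers the compact quantum metric space structure of $(\CC\Ga,L_{D_l})$ into a uniform operator-norm estimate $\|\Phi_N(x)-x\|\le \varps_N\,L(x)$ with $\varps_N\to 0$. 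Establishing this estimate for an arbitrary amenable group — rather than only for $\ZZ$ or for virtually abelian groups as in the earlier literature — is the main obstacle, and it is precisely what the recent approximation techniques are designed to overcome.

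Granting such maps, total boundedness follows from the standard criterion that a set which is approximated uniformly (modulo $\CC 1$) by sets that are themselves totally bounded is totally bounded; so it remains to check, for each fixed $N$, that $\Phi_N$ carries the $L$-unit ball into a totally bounded subset of $(\X\ralg\Ga)/\CC 1$. For this, note that $\Phi_N(x)$ is supported on the finite set $\supp(h_N)$, so its class is described by the finitely many coefficients $h_N(s)x_s$. Since $\nvert{\cdot}$ is order-preserving, the pointwise inequality $L_\X(x_s)\,\delta_s\le L_\X\circ x$ yields $L_\X(h_N(s)x_s)\le L_\X(x_s)\le \nvert{\delta_s}^{-1}$, controlling the base-direction Lipschitz seminorm of each coefficient; and for $s\neq e$ the properness of $l$ together with $L_l(x)\le 1$ bounds the operator norm $\|x_s\|$, pinning down the scalar ambiguity that the quotient by $\CC 1$ cannot absorb away from the identity coefficient. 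Invoking the compact quantum metric space structure of $(\X,L_\X)$, which renders each set $\{a\in\X:L_\X(a)\le c\}$ totally bounded modulo scalars, the coefficients then range over totally bounded subsets of $\X$; as there are only finitely many of them, $\Phi_N$ maps the unit ball into a totally bounded set modulo $\CC 1$.

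Combining the two previous paragraphs — the uniform approximation $\|\Phi_N(x)-x\|\le\varps_N$ on the unit ball and the total boundedness of each $\Phi_N$-image — an $\varps/3$ argument delivers total boundedness of the full unit ball modulo $\CC 1$, which completes the verification. Throughout, the symmetric term $L_H(x^*)$ is what handles the non-self-adjointness inherent to the operator-system framework, guaranteeing that the coefficient estimates above apply equally to $x$ and to $x^*$ and hence that the seminorm $L$ genuinely controls both halves of the quantum metric.
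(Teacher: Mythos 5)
Your overall strategy is the same as the paper's: the maps $\Phi_N$ you describe, $\Phi_N\big(\sum_s x_s u_s\big)=\sum_s h_N(s)x_s u_s$ with $h_N(s)=|F_N\cap sF_N|/|F_N|$, are exactly the Berezin transforms $\beta_{F_N}=(1\otimes\chi_{F_N})\circ\delta$ that the paper builds from a F\o lner sequence, and your finite-stage total-boundedness argument (coefficientwise control via $L_\X(x_s)\le\nvert{\delta_s}^{-1}$ and $\|x_s\|\le \|l(s)\|^{-1}$ for $s\ne e$, then invoking the compact quantum metric structure of $(\X,L_\X)$) is a sound, if slightly different, packaging of the paper's Proposition on finitely supported subsystems. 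However, there is a genuine gap at the point you yourself flag as ``the main obstacle'': the uniform estimate $\|\Phi_N(x)-x\|_\infty\le\varps_N\,L(x)$ is asserted, not proved, and nothing in your sketch supplies the mechanism. Pointwise convergence $h_N(s)\to 1$ together with ``controlling the variation of $h_N$ along the length function'' does not by itself yield an operator-norm bound against $L(x)$ that is uniform over the $L$-unit ball; some structural input is needed to convert weak$^*$ convergence of the states $\chi_{F_N}$ into a norm estimate on the crossed product.

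The missing ingredient is the following. Writing $\beta_{F_N}(x)-x=(1\otimes(\chi_{F_N}-\epsilon))\delta(x)$ and testing against a contractive functional $\eta$ on $A\rtimes\Ga$, one gets $|\eta(\beta_{F_N}(x)-x)|=|(\chi_{F_N}-\epsilon)\big((\eta\otimes 1)\delta(x)\big)|\le\rho_{L_{D_l}}(\chi_{F_N},\epsilon)\cdot L_{D_l}\big((\eta\otimes 1)\delta(x)\big)$; the quantity $\rho_{L_{D_l}}(\chi_{F_N},\epsilon)$ tends to $0$ precisely because $(\CC\Ga,L_{D_l})$ is a compact quantum metric space and $\chi_{F_N}\to\epsilon$ weak$^*$. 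What remains --- and this is the technical heart of the theorem --- is to prove that every slice map $(\eta\otimes 1)\delta\colon\X\ralg\Ga\to\CC\Ga$ is a contraction from $L_l$ to $L_{D_l}$. This is not formal: it requires comparing the commutator $[D_l,x]$ on $\ell^2(\Ga,A)\otimes\CC^n$ with the commutator of $\delta(x)$ against $1\otimes D_l$, which the paper does by conjugating with the multiplicative unitary $W$, introducing the translated operators $D_l^t$ (multiplication by $l(st)$), and showing $\|d_l^t(x)\|_\infty=\|d_l(x)\|_\infty$ via the unitaries $\rho_t$ and the twists $U_t$ built from the action $\al_t$. Without this slice-map contraction (or an equivalent substitute), your approximation step has no proof, and since everything downstream depends on it, the argument as written does not close.
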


We sketch the key ideas behind the proof of this theorem. The amenability assumption on $\Ga$ is crucial as it guarantees the existence of a Følner sequence $(F_n)_{n=1}^\infty$, from which we may define the associated Berezin transforms $(\beta_n)_{n=1}^\infty$, see \cref{def:Berezin-transform-on-crossed-product}. Applying the Berezin transforms, we approximate the pair $(\X\rtimes_{\alg} \Gamma, L)$ by systems $(\X\rtimes_{\alg} S_n, L)$, where $S_n := F_n F_n^{-1} \su \Ga$ is a finite subset determined by the $n^{\T{th}}$ entry of the Følner sequence. The approximation is done through the technical result \cref{prop:CQMS-if-close-to-CQMS}, which morally says that if a system is arbitrarily close to a compact quantum metric space, it must itself be a compact quantum metric space. Pairs such as $(\X\rtimes_{\alg} S_n, L)$ are significantly easier proven to be compact quantum metric spaces since $S_n$ is finite, see \cref{prop:Berezin-image-is-CQMS}. Indeed, this follows pretty much directly from $(\X,L_\X)$ being a compact quantum metric space. Much of the proof now requires careful analysis of the Berezin transforms. In particular, we must guarantee that it is a contraction with respect to the seminorm $L$ from \eqref{eq:intro-defi-of-L}, as well as make sure that the norm $\Vert z - \beta_n (z)\Vert$ for $z \in \X\rtimes_{\alg} \Gamma$  is small when $n$ is sufficiently large. The latter again turns out to  hinge on amenability of $\Ga$ and the assumption that $(\CC \Ga, L_{D_l})$ is a compact quantum metric space, see \cref{prop:Berezin-norm-Lip-approx}. Note that even if $\X$ is an algebra, our approximation techniques depend crucially on the use of operator systems, as $\X \rtimes_{\alg} S_n$ is in general not multiplicatively closed for our sets $S_n$.

The main source of examples satisfying the assumptions of  \cref{thm:main-theorem-crossed} comes from groups of polynomial growth, whose word length functions were shown to yield compact quantum metric spaces in \cite[Theorem 1.4]{ChristRieffel2017}.  We single this case out as a separate corollary for  ease of reference.

\begin{introcorollary}\label{maincor:poly-growth-cor-cqms}
Let $\Gamma$ be a finitely generated group of polynomial growth and {let} $l\colon \Gamma \to [0,\infty)$ be the word length function arising from a finite generating set. {Let moreover $\C X \su A$ be an operator system which is preserved by an action of $\Ga$ on the unital $C^*$-algebra $A$. If $(\X,L_\X)$ is a compact quantum metric space, then} the seminorm $L$ defined in \eqref{eq:intro-defi-of-L} yields a compact quantum metric space structure on $\X\rtimes_\alg \Gamma$.
\end{introcorollary}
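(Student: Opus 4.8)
The plan is to obtain this corollary as an immediate specialisation of \cref{thm:main-theorem-crossed}, so that the entire task reduces to verifying that the word length function on a finitely generated group of polynomial growth satisfies every hypothesis of that theorem. I would fix the finite generating set and let $l\colon \Ga \to [0,\infty)$ denote the associated word length, which I regard as a matrix-valued length function with $n=1$, i.e.\ taking values in the selfadjoint part $[0,\infty) \su \B R \su M_1(\B C) = \B C$ in the sense of \cref{d:length}. Four conditions then remain to be checked before the theorem can be applied: amenability of $\Ga$, properness of $l$, the availability of an order-preserving norm on $C_c(\Ga,\B C)$, and the compact quantum metric structure of $(\B C\Ga, L_{D_l})$. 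The operator system $\X \su A$, the action of $\Ga$ preserving it, and the compact quantum metric structure of $(\X,L_\X)$ are supplied directly by the hypotheses of the corollary.

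Three of these four points are routine. First, polynomial growth forces amenability: a group of polynomial growth has subexponential growth, so the word-metric balls satisfy the Følner condition and $\Ga$ is amenable; alternatively one invokes Gromov's theorem that such a group is virtually nilpotent, and nilpotent-by-finite groups are amenable. Second, since the generating set is finite, every ball $\{s \in \Ga \mid l(s) \leq R\}$ is finite, so $l$ is proper, and it is a genuine length function. Third, for the order-preserving norm one may simply take any $p$-norm on $C_c(\Ga,\B C)$ — for instance the supremum norm — since the excerpt already records that all $p$-norms are order-preserving.

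The one genuinely non-trivial ingredient, and hence the crux of the argument, is that $(\B C\Ga, L_{D_l})$ is itself a compact quantum metric space. This is not verified by hand; it is precisely the theorem of Christ and Rieffel \cite[Theorem 1.4]{ChristRieffel2017}, which establishes that the word-length Dirac operator on a group of polynomial growth yields a compact quantum metric space on the reduced group $C^*$-algebra. Importing that result supplies the final missing hypothesis. With amenability, properness of $l$, a choice of order-preserving norm, the assumed compact quantum metric structure on $(\X,L_\X)$, and the Christ--Rieffel structure on $(\B C\Ga, L_{D_l})$ all in place, \cref{thm:main-theorem-crossed} applies verbatim and yields that the seminorm $L$ of \eqref{eq:intro-defi-of-L} endows $\X \ralg \Ga$ with a compact quantum metric space structure. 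The only real subtlety is bookkeeping, namely correctly situating the scalar word length within the matrix-valued framework of the main theorem, after which no further work is required.
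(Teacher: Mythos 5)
Your proposal is correct and follows exactly the same route as the paper's own (one-line) proof: specialise \cref{thm:main-theorem-crossed}, citing amenability of groups of polynomial growth and \cite[Theorem 1.4]{ChristRieffel2017} for the compact quantum metric structure of $(\B C\Ga, L_{D_l})$. The additional bookkeeping you carry out (properness of the word length, viewing it as a matrix length function with $n=1$, choosing an order-preserving norm) is routine and consistent with the paper.
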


{Our second main result is in fact a corollary to \cref{thm:main-theorem-crossed} but we state it here as a separate theorem in order to highlight its importance. This second main result is a generalisation of the existing results in \cite{Klisse2023,HSWZ:STC, BMR:DSS}} 
which cover the case of $n=1$ and $\Gamma$ virtually abelian.


\begin{theoremletter}\label{introthm:spectral-metric-space}
Let $(\C A, H, D)$ be a {spectral metric space structure on a $C^*$-algebra $A$} and let $\Gamma$ be an amenable, countable discrete group equipped with a proper matrix-valued length function $l\colon \Gamma \to M_n(\CC)$ for which $(\CC\Ga, {\ell^2(\Ga) \ot \B C^{n}}, D_l)$ is a spectral metric space {structure on the reduced group $C^*$-algebra}. If $\Gamma$ acts on $A$ in a smooth and metrically equicontinuous manner, then the resulting spectral triple $(\A\rtimes_\alg\Gamma, G, D_l\times_{\Na} D)$ yields a spectral metric space structure on the reduced crossed product $A \rtimes \Ga$. 
\end{theoremletter}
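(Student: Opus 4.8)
The plan is to deduce the statement from \cref{thm:main-theorem-crossed}, applied with the operator system $\C X := \A$ and an equicontinuity seminorm, exploiting the freedom in the choice of order-preserving norm. Write $L_\times(x) := \|[D_l\times_{\Na} D, x]\|$ for the seminorm of the tensor sum, and recall from \cite[Theorem 2.7]{HSWZ:STC} that $(\A\ralg\Ga, G, D_l\times_{\Na} D)$ is already known to be a spectral triple under the smoothness and metric equicontinuity hypotheses; it thus remains only to show that $L_\times$ turns $\A\ralg\Ga$ into a compact quantum metric space. To this end I first introduce the equicontinuity seminorm $L_\al\colon \A \to [0,\infty)$, $L_\al(a) := \sup_{g\in\Ga}\|d(\al_g(a))\|$, which is finite precisely by metric equicontinuity. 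I claim that $(\A, L_\al)$ is itself a compact quantum metric space: it is lower semicontinuous (a supremum of the lower semicontinuous seminorms $a\mapsto\|d(\al_g(a))\|$), it is $\al$-invariant and $*$-symmetric, and since $L_D \leq L_\al$ (take $g=e$) its kernel equals that of $L_D$, namely $\CC 1$. Moreover $L_\al \geq L_D$ forces the unit ball $\{a : L_\al(a)\leq 1\}$ to sit inside $\{a : L_D(a)\leq 1\}$, whose image in $A/\CC1$ is totally bounded because $(\A,H,D)$ is a spectral metric space; as total boundedness passes to subsets, Rieffel's criterion gives the claim.

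Next I fix the two order-preserving norms $\|\cdot\|_\infty$ and $\|\cdot\|_1$ on $C_c(\Ga,\CC)$ and let $L_\infty$ and $L_1$ be the two seminorms produced by \cref{thm:main-theorem-crossed} from the data $(\A, L_\al)$ and $(\CC\Ga, L_{D_l})$, so that $L_\bullet(x) = \max\{L_l(x),\, L_{H,\bullet}(x),\, L_{H,\bullet}(x^*)\}$ with $L_{H,\infty}(x) = \sup_s L_\al(x(s))$ and $L_{H,1}(x) = \sum_s L_\al(x(s))$. Since $(\A, L_\al)$ and $(\CC\Ga, L_{D_l})$ are compact quantum metric spaces, $\Ga$ is amenable, and $l$ is a proper matrix-valued length function, \cref{thm:main-theorem-crossed} guarantees that \emph{both} $(\A\ralg\Ga, L_\infty)$ and $(\A\ralg\Ga, L_1)$ are compact quantum metric spaces.

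The heart of the argument is to sandwich $L_\times$ between $L_\infty$ and $L_1$. Writing $x = \sum_s a_s u_s$ and using the explicit form of the tensor sum from \cref{ss:triples-on-crossed}, the commutator $[D_l\times_{\Na} D, x]$ decomposes into two orthogonal Clifford components: a horizontal one realizing $[D_l,x]$, whose norm equals $L_l(x)$ after the interior tensoring $\ell^2(\Ga,A)\ot\CC^n \ot_A H \cong G$, and a vertical one $[1\ot D, x]$. A fibrewise computation shows that $[1\ot D, a_s u_s]$ acts as $M_s\lambda_s$, where $M_s$ is decomposable with fibre $d(\al_{g^{-1}}(a_s))$ over $g\in\Ga$, so $\|M_s\| = L_\al(a_s)$; testing on a vector supported in a single fibre (whose images under the distinct shifts $\lambda_s$ land in distinct fibres) and using the triangle inequality yields
\[
\max_s L_\al(a_s) \;\leq\; \|[1\ot D, x]\| \;\leq\; \sum_s L_\al(a_s).
\]
Since the two Clifford components are orthogonal, $\max\{L_l(x), \|[1\ot D,x]\|\} \leq L_\times(x) \leq L_l(x) + \|[1\ot D,x]\|$, and combining this with the displayed estimates, the identities $L_{H,\infty}(x) = \max_s L_\al(a_s)$, $L_{H,1}(x) = \sum_s L_\al(a_s)$, and the $*$-symmetry $L_\times(x^*) = L_\times(x)$, produces constants $C, C' \geq 1$ with
\[
L_\infty(x) \;\leq\; C\, L_\times(x) \qquad\text{and}\qquad L_\times(x) \;\leq\; C'\, L_1(x)
\]
for all $x\in\A\ralg\Ga$.

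Finally I transfer the metric property. Passing to Monge--Kantorovi\v{c} metrics reverses inequalities between seminorms, so the two displayed estimates give $\tfrac{1}{C'}\rho_{L_1} \leq \rho_{L_\times} \leq C\,\rho_{L_\infty}$ on the state space $S(A\rtimes\Ga)$. Both $\rho_{L_1}$ and $\rho_{L_\infty}$ metrise the weak$^*$ topology, which is compact and Hausdorff: the right inequality makes the identity $(S,\rho_{L_\infty})\to(S,\rho_{L_\times})$ continuous, and the left inequality makes $(S,\rho_{L_\times})\to(S,\rho_{L_1})$ continuous, so the $\rho_{L_\times}$-topology is squeezed between two copies of the weak$^*$ topology and hence coincides with it. In particular $\rho_{L_\times}$ is finite (right inequality) and separates states (left inequality), so it is a genuine metric inducing the weak$^*$ topology, which exhibits $L_\times$ as a Lip-norm and shows that $(\A\ralg\Ga, G, D_l\times_{\Na} D)$ is a spectral metric space. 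I expect the main obstacle to be the comparison step of the third paragraph: extracting from the definition of $\times_{\Na}$ and its connection the two-sided control of $\|[1\ot D, x]\|$ by the $\ell^\infty$- and $\ell^1$-lifts of $L_\al$, and verifying that the connection and curvature terms hidden in $\times_{\Na}$ contribute only bounded perturbations that leave these estimates intact. The conceptual device that makes the deduction work, despite $L_\times$ being equivalent to no single $L_\bullet$, is the simultaneous use of \cref{thm:main-theorem-crossed} for the two order-preserving norms $\|\cdot\|_1$ and $\|\cdot\|_\infty$.
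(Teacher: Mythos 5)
Your proposal is correct, and its engine coincides with the paper's: apply \cref{thm:main-theorem-crossed} with the order-preserving supremum norm to manufacture a Lip-norm on $\A\ralg\Gamma$ that is dominated by $L_{D_l\times_{\Na}D}$, the domination coming from the decomposition of the commutator into its vertical and horizontal parts (\cref{l:equislip}). The differences are peripheral but worth recording. First, the paper builds the comparison seminorm from $L_D$ itself rather than from your equicontinuity seminorm $L_\al$: \cref{l:specdomi} tests $d_H(z)$ only against vectors in the fibre over $e$, which yields $\sup_s L_D(a_s)\leq \|d_H(z)\|_\infty$ and already suffices. Your stronger bound $\sup_s L_\al(a_s)\leq \|d_H(z)\|_\infty$ (obtained by testing at all fibres $\delta_t$ and using that left translation by $s$ sends distinct fibres to distinct fibres) is also valid, but it obliges you to check separately that $(\A,L_\al)$ is a compact quantum metric space; your verification via $L_D\leq L_\al$, inclusion of unit balls and \cref{thm:Rieffels-criterion} is fine. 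Second, and more significantly, the entire upper half of your sandwich --- the estimate $L_\times\leq C'L_1$, the second application of \cref{thm:main-theorem-crossed} with the $\ell^1$-norm, and the two-sided squeeze of Monge--Kantorovi\v{c} metrics --- is redundant: once $L_\infty\leq L_{D_l\times_{\Na}D}$ is known, the $L_{D_l\times_{\Na}D}$-unit ball sits inside the $L_\infty$-unit ball, total boundedness of images in the quotient passes to subsets, and \cref{thm:Rieffels-criterion} concludes directly; this is exactly how the paper finishes. Finally, the obstacle you flag concerning ``connection and curvature terms hidden in $\times_{\Na}$'' does not materialise: as explained in \cref{r:nabla}, under the identification of $(\ell^2(\Gamma,A)\ot\CC^n)\hot_A H$ with $\ell^2(\Gamma,\CC^n\ot H)$ the operator $1\hot_{\Na}D$ becomes the plain diagonal operator and the commutator formula \eqref{eq:d_H-formula} carries no correction terms, so your fibrewise estimates hold exactly as stated.
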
	

The proof of \cref{introthm:spectral-metric-space} boils down to bounding the seminorm arising from the spectral triple $(\A\rtimes_\alg\Gamma, G, D_l\times_{\Na} D)$ from below by a seminorm of the form \eqref{eq:intro-defi-of-L}. The latter will induce a compact quantum metric space by \cref{thm:main-theorem-crossed}, from which \cref{introthm:spectral-metric-space} follows immediately. 

In particular, the above \cref{introthm:spectral-metric-space} applies in the case where $\Gamma$ is a group of polynomial growth and $l$ is a word length function arising from a finite generating set. Note that by Gromov's celebrated theorem \cite{gromov-groups-of-polynomial-growth-and-expanding-maps}, being of polynomial growth is equivalent to being virtually nilpotent, and hence this  may be viewed as  a natural extension of the main result from \cite{Klisse2023} from virtually abelian to virtually nilpotent groups. 
Moreover, the statements about groups of polynomial growth
hold true in the more general context where $\ell\colon \Gamma \to [0,\infty)$ is a proper length function of \emph{bounded doubling}, since these are the groups treated in \cite[Theorem 1.4]{ChristRieffel2017}
and amenability is implied by the assumption of bounded doubling. 
For finitely generated groups, bounded doubling and polynomial growth coincide, as shown in \cite[Proposition 1.2]{ChristRieffel2017}.\\

  {Let us end this introduction by some further remarks on the unital spectral triple $(\C A \ralg \Ga, G, D_l \ti_{\Na} D)$ on the reduced crossed product $A \rtimes \Ga$. Recall that this spectral triple makes sense under the assumption that $(\C A,H,D)$ is a unital spectral triple on a $C^*$-algebra $A$ which is equipped with a smooth and metrically equicontinuous action of $\Ga$. Moreover, we require the existence of a proper matrix length function $l \colon \Ga \to M_n(\B C)$. As demonstrated in \cref{introthm:spectral-metric-space}, this spectral triple on the reduced crossed product becomes a spectral metric space under some natural conditions on the defining data. Another point of the present paper is to further motivate the particular choice of Dirac operator $D_l \ti_{\Na} D$ by showing that it arises naturally via an application of the internal Kasparov product in unbounded $KK$-theory, \cite{Kuc:KKU,Mes:UKC,KaLe:SFU,MeRe:NMU,MeLe:SRS}.

    In \cref{ss:length}, we show how $( A \rtimes_{\alg} \Gamma, \ell^2(\Gamma, A) \ot \B C^n, D_l)$  provides an unbounded Kasparov module from $A\rtimes \Gamma$ to $A$ and, by definition, the spectral triple $(\A, H, D)$ provides one from $A$ to $\B C$. Recall from \cite{BaJu:TBK} that unbounded Kasparov modules give rise to classes in $KK$-theory via the Baaj-Julg bounded transform and we therefore obtain a class in $KK$-theory $[D_l] \in KK^p(A \rtimes \Ga, A)$ and a class in $K$-homology $[D] \in K^q(A)$, where the parities $p$ and $q$ in $\{0,1\}$ depend on the parities of the involved unbounded Kasparov modules. The main goal of \cref{sec:unbounded-KK} is to verify the compatibility of these structures, by proving Theorem \ref{t:unbprod} here below.} This is carried out by applying the machinery of unbounded $KK$-theory as introduced in \cite{Mes:UKC} and further developed in \cite{KaLe:SFU,MeRe:NMU}. In particular, we construct a correspondence, in the sense of \cite[Definition 6.3]{KaLe:SFU}, from the unbounded Kasparov module to the spectral triple. The main ingredient in this correspondence is a Hermitian connection $\Na$ which can be applied to lift the abstract Dirac operator $D$ on $H$ to a selfadjoint unbounded operator on the internal tensor product $(\ell^2(\Gamma,A) \ot \B C^n)\hot_A H$. The domain of this Hermitian connection is a certain completion of the algebraic direct sum $C_c(\Gamma,\C A) \ot \B C^n \su \ell^2(\Gamma,A) \ot \B C^n$ which takes into account the differentiable structure of the norm-dense unital $*$-subalgebra $\C A \su A$ as witnessed by the selfadjoint unbounded operator $D$. The conditions on the Hermitian connection ensures that the corresponding tensor sum (or unbounded product operator) $D_l \ti_\Na D$ fits as the abstract Dirac operator of a spectral triple on the algebraic crossed product $\C A \rtimes_\alg \Gamma$. The fact that this spectral triple represents the internal Kasparov product is then really a consequence of a criterion established by Kucerovsky in \cite{Kuc:KKU}.
 

\begin{theoremletter}\label{t:unbprod}
Suppose that the action of $\Ga$ on $A$ is {smooth and metrically} equicontinuous with respect to a unital spectral triple $(\C A,H,D)$ of parity $q$. Suppose moreover that $l \colon \Ga \to M_n(\B C)$ is a proper matrix length function of parity $p$. Then it holds that $(\C A \ralg \Ga, G, D_l {\ti_\Na} D)$ is a unital spectral triple of parity $p + q$ (modulo $2$). Moreover, in the case where $A$ is separable, the corresponding $K$-homology class $[D_l \ti_\Na D] \in K^{p + q}(A \rtimes \Ga)$ agrees with the internal Kasparov product $[D_l] \hot_A [D]$.
\end{theoremletter}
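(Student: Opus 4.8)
The plan is to identify $(\C A \ralg \Ga, G, D_l \ti_{\Na} D)$ with the unbounded Kasparov product by packaging the intermediate data as a \emph{correspondence} in the sense of \cite[Definition 6.3]{KaLe:SFU} and then invoking Kucerovsky's criterion \cite{Kuc:KKU}. First I would take the operator $A$-module $\ell^2(\Ga,A)\ot \B C^n$ underlying the unbounded Kasparov module $(\C A \ralg \Ga, \ell^2(\Ga,A)\ot \B C^n, D_l)$ and equip it with a Hermitian connection $\Na$, defined on a completion of the algebraic core $C_c(\Ga,\C A)\ot \B C^n$ that records the differentiable structure of $\C A \su A$. The natural candidate is the Grassmann connection associated to the frame $\{\de_s \ot e_i\}$, acting through the derivation $d$ of the base spectral triple, so that schematically $\Na(\de_s \ot e_i \cd a) = \de_s \ot e_i \ot d(a)$ with $d(a) = [D,a]$ regarded as a one-form. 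With $\Na$ at hand, the operator $1 \ot_{\Na} D$ lifts $D$ to $G = (\ell^2(\Ga,A)\ot \B C^n)\hot_A H$, and the candidate Dirac operator is the tensor sum $D_l \ti_{\Na} D = D_l \ot 1 \pm 1 \ot_{\Na} D$, with signs and the ambient grading dictated by the parities $p$ and $q$.

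Next I would prove the first assertion, namely that this tensor sum is a self-adjoint operator with compact resolvent whose commutators with $\C A \ralg \Ga$ are bounded. Self-adjointness I would derive from the self-adjointness theorems for such tensor sums developed in \cite{KaLe:SFU, Mes:UKC, MeRe:NMU}, which apply once one checks on the common core that $D_l \ot 1$ and $1\ot_{\Na} D$ anticommute up to a bounded operator; this bounded defect is controlled by the curvature of $\Na$ together with the commutator of $D$ with the action. The compact resolvent follows by feeding the local compactness of $(\C A \ralg \Ga, \ell^2(\Ga,A)\ot \B C^n, D_l)$ and the compact resolvent of $D$ through the internal product. Boundedness of $[D_l \ti_{\Na} D, x]$ for $x \in \C A \ralg \Ga$ is where smoothness and metric equicontinuity enter: the $D_l \ot 1$ contribution is bounded because $l$ is a length function, while the $1 \ot_{\Na} D$ contribution is bounded precisely because $\sup_{g \in \Ga}\|d(\al_g(a))\| < \infty$. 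This step essentially reproduces and extends \cite[Theorem 2.7]{HSWZ:STC} to the matrix-valued setting.

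For the second assertion I would verify the three hypotheses of Kucerovsky's theorem \cite{Kuc:KKU} for the factorisation $G = (\ell^2(\Ga,A)\ot \B C^n)\hot_A H$. The connection condition requires that for $\xi$ in a dense submodule the creation operator $T_\xi \colon H \to G$, $\eta \mapsto \xi \hot_A \eta$, maps $\Dom(D)$ into $\Dom(D_l \ti_{\Na} D)$ with bounded graded commutator; this holds because $\Na$ is a Hermitian connection, so that $[D_l \ti_{\Na} D, T_\xi]$ is governed by $\Na(\xi)$ and is bounded for $\xi$ in the core. The domain condition $\Dom(D_l \ti_{\Na} D) \su \Dom(D_l \ot 1)$ follows since $1 \ot_{\Na} D$ preserves the relevant domain decomposition. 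The positivity condition asks that the graded commutator of $D_l \ot 1$ with $1\ot_{\Na} D$ be semibounded below, which again reduces to the curvature estimate from the previous paragraph. With these in place, Kucerovsky's theorem yields $[D_l \ti_{\Na} D] = [D_l]\hot_A [D]$ in $K^{p+q}(A \rtimes \Ga)$; separability of $A$ is used to ensure the Kasparov product is well defined and unique.

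I expect the main obstacle to be the construction and analysis of the Hermitian connection $\Na$ together with the completion of its domain. The point is that the core $C_c(\Ga,\C A)\ot \B C^n$ must be completed into an operator module on which $\Na$ is simultaneously densely defined, Hermitian, and compatible with the $A$-valued inner product twisted by $D$; both the self-adjointness of $D_l \ti_{\Na} D$ and the semiboundedness in Kucerovsky's positivity condition hinge on a single curvature estimate for $\Na$, which in turn rests essentially on the metric equicontinuity of the action through the uniform bound $\sup_{g}\|d(\al_g(a))\| < \infty$. Once this estimate is established, the remaining verifications are comparatively routine bookkeeping with gradings and domains.
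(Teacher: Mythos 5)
Your overall architecture coincides with the paper's: the Grassmann connection $\Na(a\de_s \ot e_i) = d(a)\de_s \ot e_i$ defined on a completion of the core $C_c(\Ga,\C A)\ot \B C^n$, packaged as a correspondence in the sense of \cite[Definition 6.3]{KaLe:SFU}, with selfadjointness, compact resolvent and the identification $[D_l\ti_\Na D] = [D_l]\hot_A[D]$ all delivered by \cite[Theorems 6.7 and 7.5]{KaLe:SFU}, which encapsulate Kucerovsky's criterion. So the proposal is essentially the paper's proof.

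The one point you should revise is your diagnosis of where the difficulty lies. You predict that both the selfadjointness of $D_l\ti_\Na D$ and Kucerovsky's positivity condition hinge on a single ``curvature estimate'' for $\Na$ resting on metric equicontinuity. In fact, on the core $C_c(\Ga,\B C^n\ot\T{Dom}(D))$ the commutator $[D_l\hot 1, 1\hot_\Na D]$ vanishes identically: under the identification of $(\ell^2(\Ga,A)\ot\B C^n)\hot_A H$ with $\ell^2(\Ga,\B C^n\ot H)$, the operator $1\hot_\Na D$ is just the diagonal operator $\T{diag}(D)$ acting on the $H$-leg, while $D_l\hot 1$ multiplies by $l(s)$ on the $\B C^n$-leg, so the two commute fibrewise and the positivity/anticommutation condition is trivial rather than an estimate. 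Metric equicontinuity enters in exactly one place, namely the boundedness of the horizontal commutators $[1\hot_\Na D, \pi_\al(x)\la_g]$, whose norm is $\sup_{s\in\Ga}\|d(\al_s^{-1}(x))\|_\infty$ --- a point you do also identify correctly in your second paragraph, so no step of your argument actually fails; you have simply overestimated the role of the curvature. The one genuinely delicate step your proposal omits is the verification that the tensor sum, defined as the closure of the symmetric operator on the algebraic core, coincides with the selfadjoint operator on $\T{Dom}(D_l\hot 1)\cap\T{Dom}(1\hot_\Na D)$ produced by \cite[Theorem 6.7]{KaLe:SFU}; the paper settles this by showing that $D_l\ti_\Na D\pm i$ have dense range, using that for each $s\in\Ga$ the fibrewise operator obtained by freezing the bounded selfadjoint matrix $l(s)$ is selfadjoint on $\T{Dom}(D)^{\op 2}$.
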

	
The rest {of the} paper is structured as follows: in \cref{sec:prelim}, we recall the basics pertaining to crossed products and compact quantum metric spaces, and in \cref{sec:qm-on-crossed} we introduce our key tool, a version of the Berezin transform, and prove the central technical result of the paper,  \cref{thm:mainA}. In \cref{sec:slip-from-length}, we analyse the seminorm on the crossed product arising from a length function on a group, and prove \cref{thm:main-theorem-crossed} and \cref{maincor:poly-growth-cor-cqms}. We then turn to  the analysis of spectral metrics on crossed products in \cref{ss:specross} and prove \cref{introthm:spectral-metric-space}, while \cref{sec:unbounded-KK} is devoted to the $KK$-theoretic point of view on the Dirac operator on the crossed product and a proof of \cref{t:unbprod}. This structure is chosen so that readers that are not familiar with the techniques of unbounded $KK$-theory may still access the results regarding compact quantum metric structures.	
	
	\subsubsection*{Conventions}
	All inner products on Hilbert spaces and Hilbert $C^*$-modules will be assumed linear in the second argument. For a Hilbert $C^*$-module $E$, we let $\B L(E)$ denote the unital $C^*$-algebra of bounded adjointable operators on $E$. 
	Regarding tensor products, we will be using the symbol $\otimes$ to denote algebraic tensor products, the symbol $\hat{\otimes}$ to denote tensor products between Hilbert spaces and Hilbert $C^*$-modules, while the symbol $\otimes_{\R{min}}$ refers to the minimal tensor product between $C^*$-algebras. 
        {The unique $C^*$-norm on a $C^*$-algebra $A$ is always denoted by $\| \cd \|_\infty$ and this notation therefore also applies to the operator norm.}
	{For each $n \in \N$ we consider $\B C^n$ as a Hilbert space and let $e_j \in \B C^n$ denote the $j^{\T{th}}$ standard basis vector for $j \in \{1,\ldots,n\}$.}
         We consistently apply the same notation ``$1$'' for the identity operator on a vector space and the unit in a unital $C^*$-algebra.

	\subsubsection*{Acknowledgements} The authors gratefully acknowledge the financial support from  the Independent Research Fund Denmark through grant no.~9040-00107B and 1026-00371B, and from the the ERC through the MSCA Staff Exchanges grant no.~101086394.

	\section{Preliminaries}\label{sec:prelim}
	
\subsection{Reduced crossed products}\label{ss:cross}
Let $\Ga$ be a countable discrete group and let $\alpha\colon \Gamma \to \Aut(A)$ be an action of $\Gamma$ on a unital $C^*$-algebra by {$*$-automorphisms}. 
 In this preliminary subsection we (very briefly) recall the definition of the reduced crossed product $A \rtimes \Ga$ and the corresponding dual coaction, taking the theory of Hilbert $C^*$-modules as our point of departure. {The reader who is less familiar with Hilbert $C^*$-module could use the book \cite{Lance1995} as a reference.}

We let $\ell^2(\Ga,A)$ denote the Hilbert $C^*$-module over $A$ consisting of sequences $(a_s)_{s\in \Gamma}$   
indexed by $\Ga$ with values in $A$, subject to the convergence constraint that the series $\sum_{s \in \Ga} a_s^* a_s^{\phantom{\ast}}$ converges {with respect to the norm} on $A$. For each $s\in \Gamma$, we denote by $\delta_s\in  \ell^2(\Ga,A)$  the {sequence} which is zero everywhere but in position $s$, where its value is the unit in $A$. The \emph{reduced crossed product} $A \rtimes \Ga$ is the $C^*$-subalgebra of $\B L\big( \ell^2(\Ga,A)\big)$ generated by the elements $\pi_\alpha (x) \lambda_g$ for $x \in A$ and $g \in \Ga$, where
\begin{equation}\label{eq:generator}
\pi_\al(x) \la_g( a \de_s) := \al_{gs}^{-1}(x) a \de_{gs} .
\end{equation}
We may identify the $C^*$-algebra $A$ with a $C^*$-subalgebra of the reduced crossed product $A \rtimes \Ga$ via the injective $*$-homomorphism $x \mapsto \pi_\al(x) \la_e$, which we shall often suppress. In the special case where $A = \B C$, we get that $\ell^2(\Ga) := \ell^2(\Ga,\B C)$ is a Hilbert space and the corresponding reduced crossed product agrees with the reduced group $C^*$-algebra $\Cred(\Ga) := \B C \rtimes \Ga$. We apply the notation $\B C \Ga \su \Cred(\Ga)$ for the \emph{group algebra} which is the smallest $*$-subalgebra containing $\la_g$ for all $g \in \Ga$. On the reduced group $C^*$-algebra we single out the tracial state $\tau \colon \Cred(\Ga) \to \B C$ which is defined by the formula $\tau(z) := \inn{\de_e,z \de_e}$. \\

The reduced crossed product $A \rtimes \Ga$ carries a coaction of $\Gamma$. This coaction induces a map $\delta \colon A \rtimes \Ga \to (A \rtimes \Ga) \ot_{\mathrm{min}} \Cred (\Ga)$ given by
\begin{equation}\label{eq:crossed-product-coaction}
\delta(\pi_\alpha(x) \la_g) = \pi_\alpha(x)\la_g \otimes \la_g .
\end{equation}
To see that  $\de$ is indeed a well-defined,  unital $*$-homomorphism, we represent the minimal tensor product $(A \rtimes \Ga) \ot_{\mathrm{min}} \Cred (\Ga)$ faithfully on the Hilbert $C^*$-module $\ell^2(\Ga,A) \hot \ell^2(G)$ and introduce the unitary operator $W \in \B L\big( \ell^2(\Ga,A) \hot \ell^2(G) \big)$ given by the formula
\[
W( a \delta_s \otimes \delta_t) = a \delta_s \otimes \delta_{st}.
\]
For every $z \in A \rtimes \Ga$, the coaction can then be described via conjugation with $W$ as
\begin{equation}\label{eq:coaction-through-W}
\delta(z) = W (z \otimes 1) W^* .
\end{equation}
The tracial state $\tau \colon \Cred(\Ga) \to \B C$ can be promoted to a conditional expectation $E \colon A \rtimes \Ga \to A \rtimes \Ga$ defined by putting $E(z) := (1 \ot \tau) \de(z)$. The image of $E$ agrees with $A$, {viewed as a $C^*$-subalgebra of $A \rtimes \Ga$}.        
	
	\subsection{Compact quantum metric spaces}
	In this section we review the basics of Rieffel's theory of compact quantum metric spaces \cite{Rie:MSS, Rie:MSA, Rie:GHD, Rie:CQM}, as well as some of the more recent results within the theory. To align with current developments in non-commutative geometry revolving around convergence of spectral truncations \cite{walter:GH-convergence, walter-connes:truncations, CvS:Truncations, GvS:TolFin, walter-tori, Rie:Fourier-truncations, walter-estrada-cp-grps}, the results in the present section are formulated using the framework of operator systems, rather than Rieffel's original approach based on order unit spaces.

	\begin{definition}
		An \emph{operator system} $\X$ is a unital $*$-invariant subspace of a unital $C^*$-algebra $A$. The operator system $\X$ is \emph{complete} if it is closed in the $C^*$-norm inherited from $A$. 
	\end{definition}
	
	

        From now on we fix an operator system $\C X \su A$ and let $X$ denote its norm closure in $A$ so that $X$ is a complete operator system. The scalars $\B C$ are often identified with the subspace $\B C 1 \su \C X$ spanned by the unit in $\C X$. An element $x \in \X$ is \emph{positive} if it is positive as an element in the ambient unital $C^*$-algebra $A$. The state space of $\C X$ is denoted by $S(\C X)$ and we recall that a \emph{state} on $\C X$ is a unital and positive map $\mu \colon \C X \to \CC$. The state space is equipped with the weak$^*$ topology and we record that $S(\C X)$ and $S(X)$ are homeomorphic through the restriction map. 

For two unital $C^*$-algebras $A$ and $B$, operator systems $\C X \su A$ and $\C Y \su B$ will be considered isomorphic if there is a unital completely positive isomorphism $\psi \colon \C X \to \C Y$ with a completely positive inverse. Notice here that the terminology completely positive means that $\psi$ induces a positive map from $M_n(\C X) \to M_n(\C Y)$ for all $n \in \B N$ by applying $\psi$ entrywise and considering $M_n(\C X)$ and $M_n(\C Y)$ as operator systems inside the unital $C^*$-algebra $M_n(A)$ and $M_n(B)$, respectively.
        
Let us now further equip the operator system $\C X$ with a seminorm $L \colon \X \to [0,\infty)$. The kernel of $L$ is defined as the subspace $\ker L:= \{x \in \X \mid L(x) = 0\}$. The following terminology was introduced by Rieffel in \cite{Rie:matricial-bridges}. 
	
\begin{definition}\label{def:Lipshitz-seminorm}
The seminorm $L$ is a called a \emph{slip-norm} if $\CC \subseteq \ker L$ and $L$ is $*$-invariant, meaning that $L(x^*) = L(x)$ for all $x \in \X$. 
\end{definition}

For the rest of this subsection we assume that our seminorm $L$ is a slip-norm.
	
\begin{definition}\label{def:Monge-Kantorovic-metric}
The \emph{Monge-Kantorovi\v{c} metric} $\rho_L$ on the state space $S(\X)$ is defined by 
\[
\rho_L(\varphi, \psi) := \sup \big\{\vert \varphi (x) - \psi(x) \vert \mid L(x) \leq 1  \big\}
\quad \mbox{for all } \varphi, \psi \in S(\C X).
\]
\end{definition}

Notice that the Monge-Kantorovi\v{c} metric is in fact an extended metric, meaning that $\rho_L$ satisfies all the usual properties of a metric except that infinite distances between states are allowed. Nonetheless, $\rho_L$ gives rise to a topology on $S(\C X)$ with basis consisting of all the metric open balls with finite radii.

\begin{definition}
The pair $(\X, L)$ is a \emph{compact quantum metric space} if the Monge-Kantorovi\v{c} metric $\rho_L$ metrizes the weak$^*$  topology on $S(\X)$.  In this case, $L$ is referred to as a \emph{Lip-norm}. 
\end{definition}
\begin{example}
The canonical commutative example, after which the above definition is modelled,  is obtained by considering a compact metric space $(M,\rho)$ and equipping {the unital $*$-subalgebra of Lipschitz functions $\Lip(M) \su C(M)$} with the seminorm which assigns the Lipschitz constant to a given {Lipschitz function}. The fact that this indeed defines a compact quantum metric space can be traced back to the work of Kantorovi\v{c} and Rubin\v{s}te\u{\i}n \cite{KaRu:FSE, KaRu:OSC}, and a more modern proof within the framework of compact quantum metric spaces can be found in \cite{Kaad2023}. 
\end{example}

The theory of compact quantum metric spaces is also inspired by, and strongly connected with, Connes' non-commutative geometry \cite{Con:NCG}, since a unital spectral triple $(\C A, H, D)$ naturally gives rise to a seminorm $L_D\colon \C A \to [0,\infty)$. {To explain how this works, recall that every element $a$ in the coordinate algebra $\C A$ preserves the domain of the selfadjoint unbounded operator $D$ (called the Dirac operator) and that the commutator
  \[
[D,a] \colon \T{Dom}(D) \to H
\]
extends to a bounded operator on the separable Hilbert space $H$. In particular, we get a derivation $d \colon \C A \to \B L(H)$ given by $d(a) := \T{cl}([D,a])$ and the relevant seminorm is defined by the formula
\[
L_D(a):=\|d(a)\|_\infty .
\]}

It is not always the case that $L_D$ gives rise to a compact quantum metric structure \cite{PutJul:subshifts, KN:finiteness}, but it is the case in many naturally occurring examples \cite{ChristRieffel2017, OzawaRieffel2005, AgKa:PSM, KaadMikkelsen2023, Christensen-Ivan:spectral-triples, BMR:DSS, FaLaLaPa:STN}. In \cite{BMR:DSS}, the following terminology {(suggested by Rieffel)} was introduced for this situation:


\begin{definition}\label{d:sms}
  {A \emph{spectral metric space} is a unital spectral triple $(\C A, H, D)$ satisfying that $(\C A, L_D)$ is compact quantum metric space.} 
\end{definition}

        
 Returning to the general setting of an operator system $\C X$ with a slip-norm $L$, we equip the quotient space $\C X/\B C$ with the quotient norm inherited from the $C^*$-norm on $A$ and denote the quotient map by $[\cdot] \colon \X \to \X /\CC$. The following theorem is Rieffel's characterisation of compact quantum metric spaces.

\begin{theorem}[{\cite[Theorem 1.8]{Rie:MSA}}]\label{thm:Rieffels-criterion}
The pair $(\X,L)$ is a compact quantum metric space if and only if the quotient map $[\cd]$ sends the $L$-unit ball $\{x\in \X \mid L(x)\leq 1\}$ to a totally bounded subset in $\X/\mathbb{C}$.
\end{theorem}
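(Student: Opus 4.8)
The plan is to prove the two implications separately, exploiting the duality between the $L$-unit ball $\mathcal{B}_L := \{x \in \X \mid L(x) \leq 1\}$ and the state space. The organising observation is that for states $\varphi, \psi$ the quantity $\varphi(x) - \psi(x)$ annihilates $\CC$ and hence depends only on the class $[x] \in \X/\CC$; thus $\rho_L(\varphi,\psi)$ is the supremum of $|(\varphi-\psi)([x])|$ over $[x] \in [\mathcal{B}_L]$, and the whole theorem becomes a statement about $[\mathcal{B}_L]$ as a subset of the Banach space $\X/\CC$.

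For the implication that total boundedness of $[\mathcal{B}_L]$ forces $(\X, L)$ to be a compact quantum metric space, I would first note that total boundedness makes $[\mathcal{B}_L]$ bounded, say of diameter $\leq R$, which immediately gives $\rho_L(\varphi,\psi) \leq 2R < \infty$, so $\rho_L$ is a genuine (hence Hausdorff) metric. The heart of this direction is that weak$^*$-convergence implies $\rho_L$-convergence. Given a net $\varphi_i \to \varphi$ weak$^*$ and $\ep > 0$, I would cover $[\mathcal{B}_L]$ by finitely many $\ep$-balls centred at $[x_1], \ldots, [x_k]$; splitting each $(\varphi_i - \varphi)([x])$ off the nearest centre, the tail is controlled uniformly by $\|\varphi_i - \varphi\| \cdot \ep \leq 2\ep$ (differences of states have norm $\leq 2$ and kill constants), while the finitely many terms $(\varphi_i-\varphi)([x_j])$ are eventually below $\ep$ by weak$^*$-convergence. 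This yields $\rho_L(\varphi_i,\varphi)\to 0$, so the identity map $(S(\X),\text{weak}^*) \to (S(\X),\rho_L)$ is a continuous bijection from a weak$^*$-compact space to a Hausdorff space, hence a homeomorphism, which is exactly the assertion that $\rho_L$ metrizes the weak$^*$ topology.

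For the converse, assume $\rho_L$ metrizes the weak$^*$ topology, so that $(S(\X),\rho_L)$ is a \emph{compact} metric space. I would embed $[\mathcal{B}_L]$ isometrically into a space of Lipschitz functions and invoke Arzel\`a--Ascoli. To each self-adjoint $x \in \X$ associate the affine weak$^*$-continuous function $\hat x(\varphi) := \varphi(x)$; the definition of $\rho_L$ makes $\hat x$ a $1$-Lipschitz function whenever $L(x)\leq 1$. The key computation is that $[x] \mapsto [\hat x]$ is an \emph{isometry} from the self-adjoint part of $\X/\CC$ into $C(S(\X))$ modulo constants: using that every state on the operator system $\X$ extends to a state on $A$, the range $\{\varphi(x) : \varphi \in S(\X)\}$ equals $[\min\sigma(x),\max\sigma(x)]$, whence both $\inf_\lambda \|x - \lambda\|_\infty$ and $\inf_\lambda \|\hat x - \lambda\|_\infty$ equal $(\max\sigma(x)-\min\sigma(x))/2$. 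Since $(S(\X),\rho_L)$ is compact, the $1$-Lipschitz functions vanishing at a fixed basepoint are equicontinuous and uniformly bounded, hence totally bounded in sup-norm by Arzel\`a--Ascoli; transporting this back through the isometry shows $[\mathcal{B}_L \cap \X_\sa]$ is totally bounded. To remove self-adjointness I would use the slip-norm property $L(x^*)=L(x)$, which forces $\tfrac{1}{2}(x+x^*)$ and $\tfrac{1}{2i}(x-x^*)$ into $\mathcal{B}_L$ whenever $x$ is, so $[\mathcal{B}_L] \su [\mathcal{B}_L\cap\X_\sa] + i[\mathcal{B}_L\cap\X_\sa]$ is totally bounded as a sum of two totally bounded sets.

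I expect the main obstacle to be the converse direction, specifically the isometric identification of the quotient norm on $\X/\CC$ with the sup-norm-modulo-constants on affine functions. This is where the operator-system framework must be handled carefully: the identity $\{\varphi(x):\varphi\in S(\X)\} = [\min\sigma(x),\max\sigma(x)]$ for self-adjoint $x$ relies on states of $\X$ being exactly the restrictions of states of the ambient $C^*$-algebra $A$, and the passage from general to self-adjoint elements must invoke the slip-norm hypothesis. Once this isometric dictionary is established, Arzel\`a--Ascoli supplies the remaining compactness essentially for free.
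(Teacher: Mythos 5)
The paper does not prove this statement; it is imported verbatim from Rieffel's work (\cite[Theorem 1.8]{Rie:MSA}), so there is no internal proof to compare against. Your argument is correct and is essentially Rieffel's original one: the forward direction via a finite $\varepsilon$-net in $\X/\CC$ and the compact-to-Hausdorff trick, the converse via the isometric embedding $[x]\mapsto[\hat x]$ of the self-adjoint part of $\X/\CC$ into $C(S(\X))/\CC$ followed by Arzel\`a--Ascoli, with the slip-norm property $L(x^*)=L(x)$ reducing to the self-adjoint case. The only point you gloss over is the uniform boundedness input to Arzel\`a--Ascoli: since $\rho_L$ is a priori only an extended metric, finiteness of the diameter of $(S(\X),\rho_L)$ is not a formal consequence of compactness alone but also uses connectedness (convexity) of the state space --- exactly the observation the paper records in the remark following \cref{def:finite-diameter}; once that is inserted, the proof is complete.
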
	

\begin{remark}\label{rem:Rieffel-functional-version}
If $\sigma\colon \C X\to \B C$ is a bounded, unital functional, then we get an isomorphism of {normed spaces $\varphi\colon \C X \to  \C X/\B C  \oplus \B C$} by setting $\varphi(x)=([x], \sigma(x))$. The {subset}
\[
\mathcal{E}_{\sigma}:=\{x\in \X \mid L(x)\leq 1, \sigma(x)=0\} \su \C X
\]
is mapped onto $\{[x] \mid L(x)\leq 1\}\times \{0\}$ {via this isomorphism and \cref{thm:Rieffels-criterion} therefore shows that $\C E_\si \su \C X$ is totally bounded if and only if $(\C X, L)$ is a compact quantum metric space.}
\end{remark}

\begin{definition}\label{def:finite-diameter}
The pair $(\X, L)$ is said to have \emph{finite diameter} if there exists a constant $C \geq 0$ such that 
\[
\Vert [x] \Vert_{\X/\CC} \leq C \cdot L(x) \quad \mbox{for all } x \in \C X.
\]
\end{definition}

\begin{remark}	
Having finite diameter, in the sense of \cref{def:finite-diameter},  is equivalent with the (extended) metric $\rho_L$ assigning a finite diameter to the state space $S(\X)$; see \cite[Proposition 1.6]{Rie:MSA}. By connectedness and compactness of $S(\C X)$ in the weak$^*$  topology, it follows that  a compact quantum metric space automatically has finite diameter. Notice that connectedness is important here since the disjoint union of two compact metric spaces $M$ and $N$ is compact even though the distance between arbitrary points in $M$ and $N$ is deemed equal to infinity. 
\end{remark}

For the purposes of the present paper, we shall need the following convenient lemma, which we prove for lack of a good reference.

\begin{lemma}\label{l:directsum}
  Suppose that $(\C Y,K)$ is a compact quantum metric space and that $q \colon \C X \to \C Y$ and $\si \colon \C Y \to \C X$ are two unital maps satisfying 
  \begin{enumerate}
  \item $q \si = 1$;
  \item $q$ and $\si$ are slip-norm bounded and norm bounded;
  \item the subset $\big\{x \in \ker q \mid L(x) \leq 1 \big\} \su \C X$ is totally bounded.
  \end{enumerate}
Then it holds that $(\C X,L)$ is a compact quantum metric space.
\end{lemma}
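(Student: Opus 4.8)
The plan is to verify Rieffel's criterion, \cref{thm:Rieffels-criterion}, directly: I will show that the quotient map $[\cd]\colon \C X \to \C X/\CC$ carries the $L$-unit ball $B := \{x \in \C X \mid L(x) \leq 1\}$ onto a totally bounded subset of $\C X/\CC$. The key structural observation is that the section $\si$ furnishes a linear splitting of $\C X$: for every $x \in \C X$ one has $x = (x - \si q(x)) + \si q(x)$, and since $q(x - \si q(x)) = q(x) - (q\si)q(x) = 0$ by hypothesis (1), the first summand lies in $\ker q$. I will analyse the two summands separately and then recombine them.

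For the first summand, note that $\si q$ is slip-norm bounded by hypothesis (2): if $C_q$ and $C_\si$ denote the slip-norm bounds of $q$ and $\si$, then $L(\si q(x)) \leq C_\si K(q(x)) \leq C_\si C_q L(x)$, whence $L(x - \si q(x)) \leq (1 + C_\si C_q) L(x)$ as $L$ is a seminorm. Setting $C' := 1 + C_\si C_q$, we see that for $x \in B$ the element $x - \si q(x)$ lies in $C'$ times the set $\big\{y \in \ker q \mid L(y) \leq 1\big\}$, which is totally bounded by hypothesis (3); since rescaling preserves total boundedness, the set $\{x - \si q(x) \mid x \in B\}$ is totally bounded in $\C X$, and hence so is its image under the contraction $[\cd]$.

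For the second summand, I will invoke that $(\C Y, K)$ is a compact quantum metric space. Again by \cref{thm:Rieffels-criterion}, the image of the $K$-unit ball under the quotient map $[\cd]_{\C Y}\colon \C Y \to \C Y/\CC$ is totally bounded. Since $K(q(x)) \leq C_q L(x) \leq C_q$ for $x \in B$, the set $\{[q(x)]_{\C Y} \mid x \in B\}$ is contained in $C_q$ times this totally bounded set, and is therefore totally bounded. Because $\si$ is unital and norm bounded it descends to a bounded linear map $\ov\si \colon \C Y/\CC \to \C X/\CC$ satisfying $[\si q(x)] = \ov\si([q(x)]_{\C Y})$, and a bounded linear map sends totally bounded sets to totally bounded sets; thus $\{[\si q(x)] \mid x \in B\}$ is totally bounded.

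Finally, combining the two pieces via $[x] = [x - \si q(x)] + [\si q(x)]$ exhibits $[B]$ as a subset of the sum of two totally bounded subsets of the normed space $\C X/\CC$, which is again totally bounded. Rieffel's criterion then yields that $(\C X, L)$ is a compact quantum metric space. The argument is largely bookkeeping once the splitting is in place; the only point requiring genuine care is the interplay between the two seminorms $L$ and $K$ mediated by $q$ and $\si$ — specifically, that hypothesis (2) suffices to keep $x - \si q(x)$ inside a bounded multiple of the totally bounded kernel ball — so I expect that estimate, rather than any topological subtlety, to be the crux. One could alternatively phrase the conclusion through \cref{rem:Rieffel-functional-version}, but the direct quotient formulation above appears cleanest.
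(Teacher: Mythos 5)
Your proposal is correct and takes essentially the same route as the paper: both rest on the splitting $x = (x - \si q(x)) + \si q(x)$, controlling the first summand by hypothesis (3) and the second by the compact quantum metric structure of $(\C Y,K)$ together with the norm boundedness of $\si$. The only difference is cosmetic — you verify the quotient-space form of \cref{thm:Rieffels-criterion} directly (which obliges you to check that $\si$ descends to a bounded map $\C Y/\CC \to \C X/\CC$), whereas the paper slices with a state $\mu q$ via \cref{rem:Rieffel-functional-version} and proves total boundedness inside $\C X$ itself; the two formulations are interchangeable.
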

\begin{proof}
  Let $\mu \colon \C Y \to \B C$ be a state. According to \cref{thm:Rieffels-criterion} and \cref{rem:Rieffel-functional-version}, it suffices to show that the subset
  \[
T := \big\{x \in \ker(\mu q) \mid L(x) \leq 1 \big\} \su \C X
\]
is totally bounded.
However, since both $q$ and $\si$ are slip-norm bounded there exists a constant $C > 0$ such that
\[
(1 - \si q)(T) \su \big\{x \in \ker q \mid L(x) \leq C \big\} \, \, \T{and } \, \, \,
(\si q)(T) \su \si\big( \big\{y \in \ker \mu \mid K(y) \leq C \big\} \big) .
\]
Since both of the right hand sides are totally bounded, we conclude that $T \su (1 - \si q)(T) + (\si q)(T)$ is totally bounded as well.
\end{proof}

Lastly, the following approximation result from \cite[Corollary 2.10]{KaadKyed2022} will play an important role in our proof of the main results.
\begin{proposition}\label{prop:CQMS-if-close-to-CQMS}
Suppose that for every $\varepsilon >0 $ there exists a compact quantum metric space $(\X_\varepsilon, L_\varepsilon)$ and unital linear maps $\Phi_\varepsilon \colon \X \to \X_\varepsilon$ and $\Psi_\varepsilon \colon \X_\varepsilon \to \X$ such that
\begin{enumerate}
\item $\Phi_\varepsilon$ is bounded for the slip-norms and $\Psi_\varepsilon$ is bounded for the operator norms;
\item The inequality $\Vert \Psi_\varepsilon \Phi_\varepsilon (x) - x \Vert_\infty \leq \varepsilon \cdot L(x)$ holds for all $x \in \C X$. 
\end{enumerate}
Then $(\X, L)$ is a compact quantum metric space. 
\end{proposition}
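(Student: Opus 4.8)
The plan is to verify the total boundedness condition of Rieffel's criterion, \cref{thm:Rieffels-criterion}. Writing $\mathcal{B} := \{x \in \X \mid L(x) \leq 1\}$ for the $L$-unit ball and $[\cdot] \colon \X \to \X/\CC$ for the quotient map, it suffices to show that $[\mathcal{B}]$ is totally bounded in $\X/\CC$. The guiding principle is an elementary approximation fact: a subset of a metric space which, for every $\varepsilon > 0$, is contained in the $\varepsilon$-neighbourhood of some totally bounded set is itself totally bounded (given $\delta > 0$, refine an $\varepsilon$-net of the approximating set with $\varepsilon < \delta/2$ and chain the two estimates). So the whole argument reduces to producing, for each $\varepsilon$, a totally bounded subset of $\X/\CC$ that $\varepsilon$-approximates $[\mathcal{B}]$.

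First I would fix $\varepsilon > 0$ and push $\mathcal{B}$ into the model space $\X_\varepsilon$. Since $\Phi_\varepsilon$ is bounded for the slip-norms there is a constant $C_\varepsilon > 0$ with $L_\varepsilon(\Phi_\varepsilon(x)) \leq C_\varepsilon \cdot L(x)$, so $\Phi_\varepsilon(\mathcal{B})$ lies in the scaled ball $\{y \in \X_\varepsilon \mid L_\varepsilon(y) \leq C_\varepsilon\}$. Because $(\X_\varepsilon, L_\varepsilon)$ is a compact quantum metric space, \cref{thm:Rieffels-criterion} guarantees that the quotient image of its unit $L_\varepsilon$-ball is totally bounded in $\X_\varepsilon/\CC$; as the image of the scaled ball is just $C_\varepsilon$ times this set, and $[\Phi_\varepsilon(\mathcal{B})]$ is a subset of it, the set $[\Phi_\varepsilon(\mathcal{B})]$ is totally bounded.

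Next I would transport this back to $\X/\CC$ via $\Psi_\varepsilon$. Since $\Psi_\varepsilon$ is unital and linear it maps $\CC$ into $\CC$ and hence descends to a linear map $\overline{\Psi}_\varepsilon \colon \X_\varepsilon/\CC \to \X/\CC$; because $\Psi_\varepsilon$ is bounded for the operator norms, the computation $\Vert [\Psi_\varepsilon(y)] \Vert_{\X/\CC} = \inf_\lambda \Vert \Psi_\varepsilon(y - \lambda 1) \Vert_\infty \leq \Vert \Psi_\varepsilon \Vert \cdot \Vert [y] \Vert_{\X_\varepsilon/\CC}$ shows $\overline{\Psi}_\varepsilon$ is bounded, and bounded linear maps send totally bounded sets to totally bounded sets. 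Thus $[\Psi_\varepsilon \Phi_\varepsilon(\mathcal{B})] = \overline{\Psi}_\varepsilon\big([\Phi_\varepsilon(\mathcal{B})]\big)$ is totally bounded in $\X/\CC$.

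Finally, hypothesis (2) together with the contractivity of the quotient map yields, for every $x \in \mathcal{B}$, the estimate $\Vert [\Psi_\varepsilon \Phi_\varepsilon(x)] - [x] \Vert_{\X/\CC} \leq \Vert \Psi_\varepsilon \Phi_\varepsilon(x) - x \Vert_\infty \leq \varepsilon \cdot L(x) \leq \varepsilon$, so $[\mathcal{B}]$ lies in the $\varepsilon$-neighbourhood of the totally bounded set $[\Psi_\varepsilon \Phi_\varepsilon(\mathcal{B})]$. Since $\varepsilon > 0$ was arbitrary, the approximation fact above forces $[\mathcal{B}]$ to be totally bounded, and \cref{thm:Rieffels-criterion} then gives that $(\X,L)$ is a compact quantum metric space. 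I do not expect a serious analytic obstacle here: the content is purely the clean interplay of the slip-norm bound on $\Phi_\varepsilon$, the operator-norm bound on $\Psi_\varepsilon$, and estimate (2). The only points requiring care are the verification that $\Psi_\varepsilon$ descends to the quotient with controlled norm (which uses unitality crucially) and the elementary chaining of nets, so those are the steps I would write out in full.
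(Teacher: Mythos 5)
Your proof is correct: the paper itself states \cref{prop:CQMS-if-close-to-CQMS} without proof, importing it from \cite[Corollary 2.10]{KaadKyed2022}, and your argument --- pushing the $L$-unit ball forward with the slip-norm-bounded $\Phi_\varepsilon$, pulling back the totally bounded quotient image through the induced map $\overline{\Psi}_\varepsilon$ on $\X_\varepsilon/\CC$, and then chaining $\varepsilon$-nets via hypothesis (2) and \cref{thm:Rieffels-criterion} --- is exactly the standard proof of that cited result. The two points you flag as needing care (that unitality of $\Psi_\varepsilon$ makes it descend to the quotient with controlled norm, and the elementary net-refinement lemma) are indeed the only places where anything could go wrong, and you handle both correctly.
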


We are now in position to begin our investigation of compact quantum metric {space} structures on crossed products.

\section{Quantum metrics on crossed products}\label{sec:qm-on-crossed}
Throughout this section we let $A$ be a unital $C^*$-algebra equipped with an action $\al$ of a countable discrete group $\Ga$. We shall moreover consider an operator system $\C X \su A$ and suppose that $\al_g(\C X) = \C X$ for all $g \in \Ga$. 

For a subset $S \su \Ga$ satisfying that $e \in S$ and $S^{-1} = S$, we define the operator system $\C X \rtimes_{\alg} S$ as the span of the elements 
\[
\pi_\al(x) \la_g 
\quad \T{for all } x \in \C X \T{and } g \in S 
\]
inside $A \rtimes \Ga$.  
We apply the notation $X \rtimes S \su A \rtimes \Ga$ for the complete operator system obtained as the norm closure of $\C X \rtimes_\alg S$.

\subsection{Quantum metrics on subsystems of finite support}\label{ss:quametsubsys}
We recall from \cref{ss:cross} that $E \colon A \rtimes \Ga \to A$ denotes the conditional expectation defined by $E(z) := (1 \ot \tau) \de$. For each $g \in \Ga$, we then define the linear map $E_g \colon A \rtimes \Ga \to A$ by putting $E_g(z) := E( z \cd \la_g^{-1})$. Notice that $E_g$ is a norm contraction and that $E_g$ vanishes on $A$ as soon as $g \neq e$; we are here, as usual, identifying $A$ with the $C^*$-subalgebra $\pi_\al(A) \su A \rtimes \Ga$.

\begin{proposition}\label{prop:Berezin-image-is-CQMS}
{Let $S \su \Ga$ be a finite subset satisfying that $e \in S$ and $S^{-1} = S$.} Suppose that $(\C X,K)$ is a compact quantum metric space {and that $L$ is a slip-norm on $\C X \ralg S$.} If there exists a constant $C \geq 0$ such that the inequalities
  \[
  \| z - E(z) \|_\infty \leq C \cd L(z) \, \, \mbox{and } \, \, \, K( E_g(z) ) \leq C \cd L(z)
  \]
  are satisfied for all $z \in \C X \rtimes_{\alg} S$ and $g \in S$. Then the pair $\big( \C X \rtimes_{\alg} S, L \big)$ is a compact quantum metric space. 
\end{proposition}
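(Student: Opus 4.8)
The plan is to verify Rieffel's totally bounded criterion (\cref{thm:Rieffels-criterion}) directly. Writing $Y := (\C X \rtimes_{\alg} S)/\B C$ for the quotient operator system and $[\cd] \colon \C X \rtimes_{\alg} S \to Y$ for the quotient map, I must show that the set $Q := \big\{ [z] \mid z \in \C X \rtimes_{\alg} S, \ L(z) \leq 1 \big\}$ is totally bounded in $Y$. (That $\C X \rtimes_{\alg} S$ is genuinely an operator system, so that the quotient makes sense, is exactly what the hypotheses $e \in S$, $S = S^{-1}$ and $\al$-invariance of $\C X$ guarantee.) The starting point is the Fourier-type decomposition $z = \sum_{g \in S} \pi_\al(E_g(z)) \la_g$ with coefficients $E_g(z) \in \C X$, which is a \emph{finite} sum since $S$ is finite; thus the whole argument reduces to controlling the coefficients $E_g(z)$.

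First I isolate the neutral coefficient $g = e$. Here $E_e = E$, and condition (ii) gives $K(E(z)) \leq C \cd L(z) \leq C$ whenever $L(z) \leq 1$, so the classes $[E(z)]$ range inside $\{[x] \mid K(x) \leq C\} \su \C X/\B C$, which is totally bounded because $(\C X,K)$ is a compact quantum metric space. For $g \neq e$ I use that $E_g$ vanishes on $A$, hence on the image of $E$, so that $E_g(z) = E_g(z - E(z))$; combining the contractivity of $E_g$ with condition (i) yields $\| E_g(z) \|_\infty \leq \| z - E(z) \|_\infty \leq C \cd L(z)$, while condition (ii) simultaneously gives $K(E_g(z)) \leq C \cd L(z)$. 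Hence for $L(z) \leq 1$ every off-neutral coefficient lies in the set $\{ x \in \C X \mid K(x) \leq C, \ \|x\|_\infty \leq C \}$.

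The crux of the proof, which I would record as a short auxiliary lemma, is the following: in any compact quantum metric space $(\C X, K)$ and for any $R \geq 0$, the set $\{ x \in \C X \mid K(x) \leq R, \ \|x\|_\infty \leq R \}$ is totally bounded in the \emph{full} norm of $\C X$, not merely modulo scalars. This is the main obstacle and precisely the place where both hypotheses (i) and (ii) are genuinely needed: for the off-neutral coefficients there is no quotient by $\B C$ available in which to absorb a scalar, so a bound modulo scalars (as provided by $K$ alone) does not suffice, and the norm bound from (i) is essential. The proof is a routine $\varepsilon$-net argument: choose a finite $\varepsilon$-net for the totally bounded set $\{[x] \mid K(x) \leq R\} \su \C X/\B C$ with fixed representatives, and note that the bound $\|x\|_\infty \leq R$ confines the remaining scalar ambiguity to a bounded, hence totally bounded, disc in $\B C$.

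Finally I reassemble. The inclusion $\C X \hookrightarrow \C X \rtimes_{\alg} S$ induces a contractive (in fact isometric) map $j \colon \C X/\B C \to Y$, and for each $g \neq e$ the assignment $x \mapsto [\pi_\al(x)\la_g]$ defines a norm-contraction $j_g \colon \C X \to Y$. Since $[z] = j([E(z)]) + \sum_{g \neq e} j_g(E_g(z))$, the previous two paragraphs show that $Q$ is contained in the finite sumset $j(T_0) + \sum_{g \neq e} j_g(T_1)$, where $T_0 := \{[x] \mid K(x) \leq C\} \su \C X/\B C$ and $T_1 := \{ x \mid K(x) \leq C, \ \|x\|_\infty \leq C \} \su \C X$ are totally bounded. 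As the image of a totally bounded set under a Lipschitz map is totally bounded, and a finite sum of totally bounded sets is totally bounded, $Q$ is totally bounded in $Y$. By \cref{thm:Rieffels-criterion} this proves that $\big( \C X \rtimes_{\alg} S, L \big)$ is a compact quantum metric space.
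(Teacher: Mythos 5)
Your proof is correct, but it follows a genuinely different route from the paper's. The paper does not verify Rieffel's criterion for $\C X \ralg S$ directly; instead it fixes a state $\mu$ on $A$, forms the unital ``symbol'' map $V_S(z) := \sum_{g \in S} \la_g\, \mu(E_g(z))$ onto the finite-dimensional operator system $\B C S$ together with its canonical section $\io$, and feeds this data into \cref{l:directsum}. That forces some extra bookkeeping you avoid entirely: the paper must first show $\ker L = \B C$ (so that $(\B C S, L)$ is a compact quantum metric space by finite-dimensionality), and must check that $V_S$ is slip-norm bounded — which is where hypothesis (i) enters for them. The totally bounded set at the heart of their argument is $\{z \in \ker V_S \mid L(z) \leq 1\}$, whose coefficients land in $\{x \in \C X \mid K(x) \leq C,\ \mu(x) = 0\}$; the scalar direction of each coefficient is thus killed by the state $\mu$ via \cref{rem:Rieffel-functional-version}. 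You instead kill the scalar direction of the off-neutral coefficients by the norm bound $\|E_g(z)\|_\infty \leq C\cdot L(z)$ coming from hypothesis (i), through your auxiliary lemma that a subset of $\C X$ bounded in both $K$ and the $C^*$-norm is totally bounded in the full norm (which is fine: the finitely many net representatives automatically have bounded norm, so the residual scalar lives in a fixed disc). Both proofs ultimately rest on the same finite Fourier decomposition $z = \sum_{g\in S}\pi_\al(E_g(z))\la_g$ and the same two coefficient estimates, and your use of hypotheses (i) and (ii) is exactly parallel to theirs, just packaged differently. What the paper's route buys is reuse of the general-purpose \cref{l:directsum}; what yours buys is a shorter, more self-contained argument that never introduces the auxiliary state $\mu$, the system $\B C S$, or the claim $\ker L = \B C$ (which in your setup comes for free from total boundedness). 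I see no gap.
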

\begin{proof}
Remark first of all that $\ker L = \B C$. Indeed, if $z \in \ker L$, then it follows from our assumptions that $z = E(z)$ and $K( E(z)) = 0$. Since  $(\C X,K)$ is assumed to be a compact quantum metric space, $\ker K = \B C$ and we may conclude that $z \in \B C$.\\
Fix a state $\mu \colon A \to \B C$ and consider the unital bounded operator $V_S \colon A \rtimes \Ga \to \Cred(\Ga)$ defined by 
  \[
V_S(z) := \sum_{g \in S} \la_g \mu( E_g(z) ) .
\]
We record that $V_S$ induces a surjective unital linear map $V_S \colon \C X \rtimes_{\alg} S \to \B C S$. {Moreover, let $\io \colon \Cred(\Ga) \to A \rtimes \Ga$ denote the injective unital $*$-homomorphism sending $\la_g$ to $\pi_\al(1) \la_g$ and record that $\io$ restricts to a right inverse $\io \colon \B C S \to \C X \ralg S$ of $V_S \colon \C X \rtimes_{\alg} S \to \B C S$.} The strategy is now to apply \cref{l:directsum} with $q=V_S$ and $\sigma = \io$ and we therefore verify the relevant conditions here below.\\
Let us identify $\B C S$ with an operator subsystem of $\C X \rtimes_{\alg} S$ via the {inclusion $\io$.} Applying the finite-dimensionality of $\B C S$ and the observation that $\ker L = \B C$ we then obtain that $(\B C S, L)$ is a compact quantum metric space. Regarding the boundedness condition, the only thing that needs proof is the fact that $V_S$ is slip-norm bounded. To see this, put $M:=\max\{L(\la_g) \mid g\in S\}$ and let $z \in \C X \rtimes_{\alg} S$ be given.
Applying our assumptions, we then get the estimates
\[
\begin{split}
  L\big( V_S(z) \big)
  & \leq \sum_{g \in S \sem \{e \}} L(\la_g) \cd \big| \mu(E_g(z))\big| 
\leq M \cd \sum_{g \in S \sem \{e\}} \| E_g(z) \|_\infty \\
& \leq M \cd (|S| - 1) \cd \| z - E(z) \|_\infty
\leq M \cd (|S| - 1) \cd C \cd L(z),
\end{split}
\]
establishing that $V_S \colon \C X \rtimes_{\alg} S \to \B C S$ is slip-norm bounded.\\
It therefore only remains to show that the subset
\[
R := \big\{z \in \ker V_S \mid L(z) \leq 1 \big\}
\su \C X \rtimes_{\alg} S
\]
is totally bounded. To verify this, remark that the set $T := \big\{x \in \C X \mid K(x) \leq C \T{ and } \mu(x) = 0 \big\}$ is totally bounded since $(\C X,K)$ is a compact quantum metric space; see \cref{rem:Rieffel-functional-version}. For every $z \in R$ and $g \in S$ it moreover holds that $E_g(z) \in T$ and hence that
\[
z = \sum_{g \in S} \pi_\al(E_g(z)) \cd \la_g \in \sum_{g \in S} \pi_\al(T) \cd \la_g.
\]
This shows that $R$ is totally bounded since $R$ is contained in the totally bounded set $\sum_{g \in S} \pi_\al(T) \cd \la_g$.
\end{proof}

Having now obtained a criterion for when finitely supported operator subsystems $\C X\rtimes_{\alg} S$  are compact quantum metric spaces, our next aim is to provide suitable maps from $\C X\rtimes_{\alg}  \Gamma$ into such {finitely supported subsystems. The end goal is to} deduce that $\C X\rtimes_{\alg}  \Gamma$ itself is a compact quantum metric space via an application of Proposition \ref{prop:CQMS-if-close-to-CQMS}. 

\subsection{The Berezin transform for crossed products}\label{ss:Berezin}
The  Berezin transform has played a pivotal role in the theory of compact quantum metric spaces since its first appearance in Rieffel's foundational paper \cite{Rie:MSG}. Most recently, it has been generalised to the setting of $q$-deformed spaces, being an essential tool in analysing the compact quantum metric structures of the Podle{\'s} sphere \cite{AKK:Podcon}, quantum $SU(2)$ \cite{KaadKyed2022} and the quantum projective spaces \cite{KaadMikkelsen2023}. The aim of the present subsection is to show how one may obtain a version of the Berezin transform for crossed products and apply it to investigate {quantum metrics in the situation where the group in question} is amenable. 
\medskip

Let us for a while fix a finite non-empty subset $F\su \Gamma$ and consider the corresponding unit vector
\begin{equation}\label{eq:unitvector}
\xi_F := \frac{1}{\vert F \vert^{1/2}} \sum_{s \in F} \delta_s 
\end{equation}
in the Hilbert space $\ell^2(\Ga)$. The unit vector $\xi_F$ then yields the vector state $\chi_F \colon \Cred (\Ga) \to \CC$ given by $\chi_F(z) := \inn{\xi_F , z \xi_F}$. Notice that in the special case where $F = \{e\}$, then $\chi_F$ agrees with the tracial state $\tau$. Recall, from \cref{ss:cross}, that the notation $\delta \colon A \rtimes \Ga \to (A \rtimes \Ga) \otimes_{\mathrm{min}} \Cred(\Ga)$ refers to the dual coaction, which we now use to define a Berezin transform in our setting. 

\begin{definition}\label{def:Berezin-transform-on-crossed-product}
The \emph{Berezin transform} associated to the finite non-empty subset $F \su \Ga$ is the unital positive map $\beta_F \colon A \rtimes \Ga \to A \rtimes \Ga$ given by
\begin{equation*}
\beta_F (z) = (1 \otimes \chi_F)  (\delta(z)) .
\end{equation*}
\end{definition}
A straightforward calculation shows that for  $z = \sum_{g \in \Ga} \pi_\al(x_g) \la_g \in A \rtimes_{\alg} \Ga$ we have the formulae
\begin{equation}\label{eq:berezin}
  \beta_F(z) = \sum_{g \in \Ga} \pi_\al(x_g) \la_g \cd \chi_F(\la_g)
  = \sum_{g \in \Ga} \frac{\vert F \cap gF \vert}{\vert F \vert} \pi_\al(x_g) \la_g.
\end{equation}
Let us define the finite subset $S := F \cd F^{-1} = \big\{s t^{-1} \mid s,t \in F \big\}$ and record that $e \in S$ and that $S = S^{-1}$. 

\begin{lemma}\label{l:imaber}
  We have the identities $\be_F( X \rtimes \Ga) = X \rtimes_\alg S$ and $\be_F(\C X \rtimes_\alg \Ga) = \C X \rtimes_\alg S$.
\end{lemma}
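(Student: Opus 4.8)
The entire lemma turns on one elementary combinatorial fact about the coefficients appearing in \eqref{eq:berezin}. Writing $c_g := \vert F \cap gF\vert / \vert F\vert$, I would first record that $c_g > 0$ exactly when $g \in S$ and $c_g = 0$ otherwise. Indeed $F \cap gF \neq \emptyset$ if and only if $s = gt$ for some $s,t \in F$, i.e.\ if and only if $g = st^{-1} \in FF^{-1} = S$. Thus, on a finitely supported element $z = \sum_g \pi_\al(x_g)\la_g$, the Berezin transform $\be_F$ merely rescales the $g$-th Fourier coefficient by the scalar $c_g$ and annihilates every coefficient indexed by $g \notin S$.

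This already yields the algebraic identity $\be_F(\C X \rtimes_{\alg}\Ga) = \C X \rtimes_{\alg} S$. For $z \in \C X \rtimes_{\alg}\Ga$ the formula \eqref{eq:berezin} gives $\be_F(z) = \sum_{g \in S} c_g \pi_\al(x_g)\la_g \in \C X \rtimes_{\alg} S$, which is the inclusion ``$\su$''. For the reverse inclusion, given $w = \sum_{g \in S}\pi_\al(y_g)\la_g$ I would exploit that each $c_g$ with $g \in S$ is a strictly positive scalar and that $\C X$ is a vector space: the element $z := \sum_{g \in S}\pi_\al(c_g^{-1}y_g)\la_g$ lies in $\C X \rtimes_{\alg} S \su \C X \rtimes_{\alg}\Ga$ and satisfies $\be_F(z) = w$.

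For the $C^*$-level identity $\be_F(X \rtimes \Ga) = X \rtimes_{\alg} S$ I would first upgrade \eqref{eq:berezin} to the closure, using crucially that $S$ is finite. Recall that each coordinate map $E_g$ is a norm contraction and that $E_g\big(\sum_h \pi_\al(a_h)\la_h\big) = a_g$; hence the bounded operator $T(z) := \sum_{g \in S} c_g\, \pi_\al(E_g(z))\,\la_g$ (a finite sum of bounded maps) agrees with $\be_F$ on the dense $*$-subalgebra $A \rtimes_{\alg}\Ga$, and therefore $\be_F = T$ on all of $A \rtimes \Ga$ by continuity. Now if $z \in X \rtimes \Ga = \ov{\C X \rtimes_{\alg}\Ga}$, then $E_g(z) \in X$ for every $g$, since $E_g$ is continuous, maps $\C X \rtimes_{\alg}\Ga$ into $\C X$, and $X = \ov{\C X}$ is closed; consequently $\be_F(z) = \sum_{g \in S} c_g\,\pi_\al(E_g(z))\,\la_g \in X \rtimes_{\alg} S$. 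The reverse inclusion is verbatim the argument of the previous paragraph with $\C X$ replaced by $X$.

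The only step that is not purely formal is this passage to the norm closure, and I expect it to be the main, albeit mild, obstacle: one has to be sure that applying $\be_F$ to a norm limit cannot create coefficients outside $X$ or support outside $S$. The finite-support rewriting $\be_F = T$ disposes of both concerns at once, and it additionally shows that $X \rtimes_{\alg} S$ is already norm-closed --- it is the range of the bounded idempotent $z \mapsto \sum_{g \in S}\pi_\al(E_g(z))\la_g$ restricted to $X \rtimes \Ga$ --- so that no ambiguity remains between $X \rtimes_{\alg} S$ and its closure $X \rtimes S$.
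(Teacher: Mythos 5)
Your proof is correct and follows essentially the same route as the paper: the observation that the coefficients $c_g = \vert F \cap gF\vert/\vert F\vert$ are strictly positive exactly on $S$ gives both inclusions at the algebraic level, and the passage to the norm closure rests, in both arguments, on the finiteness of $S$ together with the contractivity of the coordinate maps $E_g$. The only cosmetic difference is that you make the closure step explicit by rewriting $\be_F$ as the bounded map $T$ built from the $E_g$, whereas the paper simply invokes norm-continuity of $\be_F$ together with the (equivalent) fact that $X \rtimes_{\alg} S$ is norm-closed.
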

\begin{proof}
  Let first $g \in \Ga$ be given and record that
\[
\chi_F(\la_g) = 0 \lrar \vert F \cap gF\vert = 0 \lrar g \notin S
\]
Consulting the formulae in \eqref{eq:berezin} we then obtain that
\[
X \rtimes_\alg S \su \be_F( X \rtimes \Ga) \, \, \T{and } \, \, \, \be_F(\C X \rtimes_\alg \Ga) = \C X \rtimes_\alg S .
\]
The remaining inclusion follows by remarking that the finiteness of $F$ implies that $X \rtimes_{\alg} S$ is a norm-closed subspace of $X \rtimes \Ga$.
\end{proof}

For the rest of this subsection, we shall assume that $\Ga$ is \emph{amenable}. Letting $\epsilon\colon \CC\Ga \to \CC$ denote the counit defined by the formula $\epsilon(\la_g) = 1$ for all $g \in \Ga$, the amenability assumption on $\Ga$ amounts to demanding that $\epsilon$ extends to a character on $\Cred(\Gamma)$.  Amenability of $\Gamma$ may be characterised in a multitude of ways   \cite[Theorem 2.6.8]{Brown-Ozawa}, the most important for our purposes being by the existence of a  \emph{Følner sequence} $(F_n)_{n = 1}^\infty$; see \cite[Section 2.6]{Brown-Ozawa}. We recall here that each $F_n \su \Ga$ is a finite non-empty subset such that \begin{itemize}
\item[(i)] For every $g \in \Ga$ there exists an $N \in \nn$ such that $g \in F_n$ for all $n \geq N$ and;
\item[(ii)] For every $g \in \Ga$ it holds that $\lim_{n \to \infty} |F_n \cup g F_n|/|F_n| = 1 = \lim_{n \to \infty} |F_n \cap g F_n|/|F_n|$.
\end{itemize}  
For each $n \in \N$, we have the state $\chi_{F_n}$ and the Berezin transform $\beta_{F_n}$ from \cref{def:Berezin-transform-on-crossed-product}, which we will denote by $\chi_n$ and $\beta_n$, respectively.
The next result follows immediately from the properties of the Følner sequence together with the computation $\chi_n(\la_g) = |F_n \cap g F_n|/|F_n|$.

\begin{lemma}\label{lem:ptwise-convergence}
The sequence of states $(\chi_n)_{n = 1}^\infty$ converges to $\epsilon$ in the weak$^*$  topology.
\end{lemma}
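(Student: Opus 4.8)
The plan is to exploit that weak$^*$ convergence of states in $\Cred(\Ga)$ is nothing but pointwise convergence of the associated functionals, and to reduce the verification first to the norm-dense $*$-subalgebra $\CC \Ga$ and then to its linear generators $\la_g$. First I would record that each $\chi_n$ is a state and hence a functional of norm $1$, and likewise $\epsilon$; since the family $\{\chi_n\}_{n=1}^\infty \cup \{\epsilon\}$ is uniformly bounded and $\CC \Ga$ is norm-dense in $\Cred(\Ga)$, a standard $\varepsilon/3$-argument shows that $\chi_n(z) \to \epsilon(z)$ for every $z \in \Cred(\Ga)$ as soon as this convergence is established for all $z \in \CC \Ga$. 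By linearity it then suffices to treat the generators $z = \la_g$, $g \in \Ga$.

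Next I would carry out the key computation of $\chi_n(\la_g)$. Using that the inner product is linear in the second argument and that $\la_g \de_s = \de_{gs}$ in the left regular representation, I expand
\[
\chi_n(\la_g) = \inn{\xi_{F_n}, \la_g \xi_{F_n}} = \frac{1}{|F_n|} \sum_{s,t \in F_n} \inn{\de_t, \de_{gs}} = \frac{1}{|F_n|} \big| \{ s \in F_n \mid gs \in F_n \} \big| = \frac{|F_n \cap g^{-1} F_n|}{|F_n|}.
\]
Since left translation by $g$ is a bijection of $\Ga$ and the counting measure is therefore translation invariant, one has $|F_n \cap g^{-1} F_n| = |g F_n \cap F_n|$, so that $\chi_n(\la_g) = |F_n \cap g F_n|/|F_n|$, matching the stated formula.

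Finally I would invoke the Følner property (ii), which asserts precisely that $\lim_{n \to \infty} |F_n \cap g F_n|/|F_n| = 1$ for every $g \in \Ga$. As $\epsilon(\la_g) = 1$ by definition of the counit, this gives $\chi_n(\la_g) \to \epsilon(\la_g)$ for all $g$, and combined with the reduction above the claim follows. There is no genuine obstacle in this argument; the only points demanding (minor) care are the uniform-boundedness reduction to the dense subalgebra and the bookkeeping of $g$ versus $g^{-1}$ in the inner-product computation, both of which are routine.
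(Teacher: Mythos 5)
Your proposal is correct and follows essentially the same route as the paper: the paper simply remarks that the lemma follows immediately from the Følner property together with the computation $\chi_n(\la_g) = |F_n \cap g F_n|/|F_n|$, which is exactly the computation you carry out, and your reduction to the dense subalgebra $\CC\Ga$ via uniform boundedness of the states is the standard step the paper leaves implicit.
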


Recall that $\C X \su A$ is an operator system with $\al_g(\C X) = \C X$ for all $g\in \Gamma$. Let us now fix a slip-norm $L$ on the operator system $\C X \rtimes_\alg \Ga \su A \rtimes \Ga$ and a slip-norm $L_{\Gamma}$ on the group algebra $\B C\Ga$, the latter viewed as an operator system inside $\Cred(\Ga)$. 
We let $\rho_{L_{\Gamma}}$ denote the corresponding Monge-Kantorovi\v{c} metric on the state space $S(\Cred(\Ga))$, which is homeomorphic to $S(\B C\Ga)$ via restriction.
Notice also that the dual coaction $\de$ maps the operator system $\C X \rtimes_\alg \Ga$ to an operator subsystem of the algebraic tensor product $(\C X \rtimes_\alg \Ga) \ot \B C\Ga$, {which sits inside the minimal tensor product $(A \rtimes \Ga) \otm \Cred(\Ga)$. We would also like to emphasise the subtlety that the quantity $\rho_{L_\Ga}(\chi_F,\epsilon)$, appearing in the statement of the next proposition, could in principle be infinite (making the corresponding estimate void).}

\begin{proposition}\label{prop:Berezin-norm-Lip-approx}
Suppose that $\Ga$ is amenable and that the inequality 
  \[
L_{\Gamma}( (\eta \ot  1)\de(z) ) \leq L(z)
\]
holds for all contractive functionals $\eta \colon X \rtimes \Ga \to \B C$ and all $z \in \C X \rtimes_\alg \Ga$. Then we have the estimate 
\[ 
\| \beta_F(z) - z \|_\infty \leq \rho_{L_{\Gamma}}(\chi_F, \epsilon) \cd L(z),
\]
for every $z \in \C X \rtimes_\alg \Ga$ and every finite non-empty subset $F \su \Ga$.
\end{proposition}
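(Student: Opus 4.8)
The plan is to exploit the fact that both $\beta_F$ and the identity map arise from the dual coaction $\de$ by slicing off the second leg with the states $\chi_F$ and $\epsilon$, respectively. Concretely, since $\Ga$ is amenable the counit $\epsilon$ extends to a character on $\Cred(\Ga)$, and a direct computation on a generator $\pi_\al(x)\la_g$ shows that $(1 \ot \epsilon)\de = \id$ on $\C X \rtimes_\alg \Ga$. Combined with $\beta_F = (1 \ot \chi_F)\de$ this yields the key identity $\beta_F(z) - z = \big(1 \ot (\chi_F - \epsilon)\big)\de(z)$, so the whole problem is reduced to controlling the single slice $\big(1 \ot (\chi_F - \epsilon)\big)$ applied to $\de(z)$.

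To estimate the norm of the left-hand side I would pass to the predual: since $\beta_F(z) - z$ lies in the complete operator system $X \rtimes \Ga$, Hahn--Banach gives $\| \beta_F(z) - z \|_\infty = \sup_\eta |\eta(\beta_F(z) - z)|$, the supremum being taken over all contractive functionals $\eta \colon X \rtimes \Ga \to \B C$ --- precisely the class appearing in the hypothesis. For a fixed such $\eta$ the next step is a Fubini-type interchange on the minimal tensor product: because $\eta \ci (1 \ot \chi_F) = \chi_F \ci (\eta \ot 1)$ and $\eta \ci (1 \ot \epsilon) = \epsilon \ci (\eta \ot 1)$ as slice maps on $(X \rtimes \Ga) \otm \Cred(\Ga)$, setting $w := (\eta \ot 1)\de(z)$ one obtains $\eta(\beta_F(z) - z) = (\chi_F - \epsilon)(w)$. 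Here the finiteness of the support of $z$ guarantees that $w$ genuinely lies in the group algebra $\B C\Ga$, so that the slip-norm $L_\Ga$ may legitimately be applied to it.

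The final step is purely a matter of the Monge-Kantorovi\v{c} metric. For any two states and any element $w$ one has $|(\chi_F - \epsilon)(w)| \leq \rho_{L_\Ga}(\chi_F, \epsilon) \cd L_\Ga(w)$: this is immediate by rescaling when $L_\Ga(w) > 0$, and when $L_\Ga(w) = 0$ it follows from the possibly-infinite nature of $\rho_{L_\Ga}$, since if $\chi_F$ and $\epsilon$ ever disagreed on $\ker L_\Ga$ the distance would already be infinite, rendering the target estimate vacuously true. Invoking the standing hypothesis $L_\Ga\big((\eta \ot 1)\de(z)\big) \leq L(z)$ then gives $|\eta(\beta_F(z) - z)| \leq \rho_{L_\Ga}(\chi_F, \epsilon) \cd L(z)$ uniformly in $\eta$, and taking the supremum over $\eta$ closes the argument.

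I expect the only genuinely delicate points to be bookkeeping rather than conceptual: justifying the slice-map interchange $\eta \ci (1 \ot \chi_F) = \chi_F \ci (\eta \ot 1)$ on the minimal tensor product, and handling the degenerate case $L_\Ga(w) = 0$ together with the possibility that $\rho_{L_\Ga}(\chi_F, \epsilon) = \infty$, so as not to assert a void inequality. Both are standard once isolated, so the proposition should follow cleanly from the identity $(1 \ot \epsilon)\de = \id$ together with the hypothesis.
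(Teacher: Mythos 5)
Your proposal is correct and follows essentially the same route as the paper's proof: reduce to $|\eta(\beta_F(z)-z)|$ over contractive functionals $\eta$, use $(1\ot\epsilon)\de(z)=z$ and the slice interchange to rewrite this as $|(\chi_F-\epsilon)\big((\eta\ot 1)\de(z)\big)|$, and then apply the Monge--Kantorovi\v{c} bound together with the hypothesis. Your extra care about the degenerate cases ($L_\Ga(w)=0$ and $\rho_{L_\Ga}(\chi_F,\epsilon)=\infty$) matches the paper's own remark that the estimate is void when the distance is infinite.
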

\begin{proof}
  Let $z \in \C X \rtimes_\alg \Ga$ be given and let $F \su \Ga$ be a finite non-empty subset. Consider an arbitrary contractive functional $\eta \colon X \rtimes \Ga \to \B C$. It then suffices to show that
  \[
\big| \eta\big( \beta_F(z) - z \big) \big| \leq \rho_{L_{\Gamma}}(\chi_F, \epsilon) \cd L(z) .
\]
Using that $( 1 \otimes \epsilon)\delta(z)=z$,  this estimate is a consequence of the following computation:
\[
\begin{split}
  \big| \eta\big(\be_F(z) - z \big)\big|
& = \big| \big(\eta \otimes (\chi_{F_n} - \epsilon) \big)\delta(z)  \big|
= \big| (\chi_F - \epsilon)\big( (\eta \otimes  1) \delta(z)  \big) \big| \\
&\leq \rho_{L_{\Gamma}}(\chi_F , \epsilon) \cdot L_{\Gamma} ((\eta \otimes  1) \delta(z)) 
\leq \rho_{L_{\Gamma}}(\chi_F , \epsilon) \cdot L(z) . \qedhere
\end{split}
\]
\end{proof}

\subsection{Constructing quantum metrics on crossed products}
 The aim of this short section is to prove the main technical result of the present paper, which provides criteria ensuring that  crossed products with amenable groups are compact quantum metric spaces. In the sections to follow, we shall see how our main theorems from the introduction follow from this result.

\begin{theorem}\label{thm:mainA}
  Suppose that $\Ga$ is an amenable, countable discrete group and that $(\C X, L_\X)$ and $(\B C\Ga, L_{\Gamma})$ are compact quantum metric spaces.
  Suppose moreover that the action of $\Ga$ on the unital $C^*$-algebra $A$ preserves the operator system $\C X \su A$. If $L$ is a slip-norm on $\C X \rtimes_\alg \Ga$ satisfying the following conditions:
  \begin{enumerate}
  \item The Berezin transform $\be_F \colon \C X \rtimes_\alg \Ga \to \C X \rtimes_\alg \Ga$ is slip-norm bounded for every finite non-empty subset $F \su \Ga$.
  \item The linear map $(\eta \ot 1) \de \colon \C X \rtimes_\alg \Ga \to \B C \Ga$ is a slip-norm contraction for every contractive functional $\eta \colon X \rtimes \Ga \to \B C$.
  \item The linear map $E_g \colon \C X \rtimes_\alg \Ga \to \C X$ is slip-norm bounded for every $g \in \Ga$.
  \end{enumerate}
Then the pair $\big( \C X \rtimes_\alg \Ga, L \big)$ is a compact quantum metric space.
\end{theorem}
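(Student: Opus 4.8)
The plan is to obtain the statement from the abstract approximation criterion \cref{prop:CQMS-if-close-to-CQMS}, using the Berezin transforms attached to a F\o lner sequence as the approximating maps. Since $\Ga$ is amenable, fix a F\o lner sequence $(F_n)_{n=1}^\infty$ and let $\be_n$ and $\chi_n$ be the associated Berezin transforms and vector states. Writing $S_n := F_n F_n^{-1}$, \cref{l:imaber} tells us that $\be_n$ maps $\C X \ralg \Ga$ onto the finitely supported subsystem $\C X \ralg S_n$. I would then take $\C X_\ep := \C X \ralg S_n$ for suitably large $n$, with $\Phi_\ep := \be_n$ and $\Psi_\ep$ the isometric inclusion $\C X \ralg S_n \hookrightarrow \C X \ralg \Ga$; both are unital, $\be_n$ being a unital positive map and the inclusion being the identity on a subspace.

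First I would verify that each pair $(\C X \ralg S_n, L)$ is itself a compact quantum metric space by means of \cref{prop:Berezin-image-is-CQMS}. That result requires a constant $C \geq 0$ with $\| z - E(z) \|_\infty \leq C \cd L(z)$ and $L_\X(E_g(z)) \leq C \cd L(z)$ for all $z \in \C X \ralg S_n$ and all $g \in S_n$. The second family of inequalities is precisely hypothesis (3), and since $S_n$ is finite we may take the maximum of the finitely many resulting bounds. For the first inequality I would exploit the identity $E = \be_{\{e\}}$, which holds because $\chi_{\{e\}} = \tau$; applying \cref{prop:Berezin-norm-Lip-approx} --- whose hypothesis is exactly condition (2) --- to the subset $F = \{e\}$ then gives $\| z - E(z) \|_\infty \leq \rho_{L_\Ga}(\tau,\epsilon) \cd L(z)$. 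Here $\rho_{L_\Ga}(\tau,\epsilon)$ is finite, since $(\B C\Ga, L_\Ga)$ has finite diameter and both $\tau$ and the counit $\epsilon$ (a character on $\Cred(\Ga)$ by amenability) are states. That $L$ restricts to a slip-norm on $\C X \ralg S_n$ is immediate from $e \in S_n$, $S_n = S_n^{-1}$ and $\C X$ being unital and $*$-invariant.

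With this in hand, the two conditions of \cref{prop:CQMS-if-close-to-CQMS} are readily checked. Condition (1) holds because $\Phi_\ep = \be_n$ is slip-norm bounded by hypothesis (1), while $\Psi_\ep$, being an isometric inclusion, is operator-norm bounded. For condition (2), \cref{prop:Berezin-norm-Lip-approx} yields $\| \be_n(z) - z \|_\infty \leq \rho_{L_\Ga}(\chi_n,\epsilon) \cd L(z)$, so it remains to drive $\rho_{L_\Ga}(\chi_n,\epsilon)$ below any prescribed $\ep > 0$. This is exactly where amenability meets the Lip-norm hypothesis on $L_\Ga$: by \cref{lem:ptwise-convergence} the states $\chi_n$ converge to $\epsilon$ in the weak$^*$ topology, and since $(\B C\Ga, L_\Ga)$ is a compact quantum metric space the metric $\rho_{L_\Ga}$ metrises that topology, whence $\rho_{L_\Ga}(\chi_n,\epsilon) \to 0$. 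Given $\ep > 0$ I would therefore choose $n$ with $\rho_{L_\Ga}(\chi_n,\epsilon) \leq \ep$, and \cref{prop:CQMS-if-close-to-CQMS} then produces the desired compact quantum metric space structure on $\C X \ralg \Ga$.

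The individual steps are short once the machinery of \cref{ss:quametsubsys} and \cref{ss:Berezin} is in place; the single load-bearing observation is the identification $E = \be_{\{e\}}$. It is this that allows the one estimate furnished by \cref{prop:Berezin-norm-Lip-approx} to play a double role --- supplying both the finite-support norm bound required to invoke \cref{prop:Berezin-image-is-CQMS} and the vanishing approximation error required by \cref{prop:CQMS-if-close-to-CQMS}. I do not anticipate any serious difficulty beyond the bookkeeping of constants (which may depend on $n$, though this is harmless as each finitely supported system need only be a compact quantum metric space on its own) and the routine check that $\epsilon$ belongs to the state space so that the distances $\rho_{L_\Ga}(\chi_n,\epsilon)$ are genuinely defined.
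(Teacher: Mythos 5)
Your proposal is correct and follows essentially the same route as the paper's own proof: the same Følner/Berezin approximation via \cref{prop:CQMS-if-close-to-CQMS}, the same use of \cref{prop:Berezin-image-is-CQMS} for the finitely supported systems, and the same key observation that $E = \be_{\{e\}}$ lets \cref{prop:Berezin-norm-Lip-approx} supply both the $\| z - E(z)\|_\infty$ bound and the vanishing approximation error. No gaps.
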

\begin{proof}
Fix a Følner sequence $(F_n)_{n = 1}^\infty$ and the corresponding sequence of states $(\chi_n)_{n = 1}^\infty$ and Berezin transforms $(\beta_n)_{n = 1}^\infty$, constructed in \cref{ss:Berezin}. We aim to apply \cref{prop:CQMS-if-close-to-CQMS}, so let $\varepsilon >0$ be given.

Since $(\B C\Ga,L_{\Gamma})$ is assumed to be a compact quantum metric space, the Monge-Kantorovi\v{c} metric $\rho_{L_{\Gamma}}$ metrizes the weak$^*$  topology and by \cref{lem:ptwise-convergence} we may therefore choose an $n\in \N$ such that $\rho_{L_{\Gamma}} (\chi_n , \epsilon) \leq \varepsilon$. 
Using the language of \cref{prop:CQMS-if-close-to-CQMS}, we set $(\C X_\varepsilon, L_\varepsilon) = ( \X \rtimes_\alg S_n, L)$ where $S_n := F_n \cd F_n^{-1}$. By \cref{l:imaber} we may moreover consider the unital linear map $\Phi_\ep = \be_n \colon \X \rtimes_\alg \Ga \to \X \rtimes_\alg S_n$ and the unital linear map $\Psi_\ep \colon \X \ralg S_n \to \X \ralg \Ga$ given by inclusion. It follows from our assumptions that $\Phi_\ep$ is slip-norm bounded and, being an isometry, $\Psi_\ep$  is clearly norm-bounded. 

According to \cref{prop:CQMS-if-close-to-CQMS}, we need to show that $( \X \rtimes_\alg S_n, L)$ is a compact quantum metric space, {and to this end we apply} \cref{prop:Berezin-image-is-CQMS}. Because of our assumptions, it suffices to find a constant $C \geq 0$ such that $\| z - E(z) \|_\infty \leq C \cd L(z)$ for all $z \in \C X \ralg \Ga$. But this is a consequence of \cref{prop:Berezin-norm-Lip-approx} and the observation that $\chi_{\{e\}} = \tau$ so that 
\[
\| z - E(z) \|_\infty = \| z - \be_{\{e\}}(z) \|_\infty \leq \rho_{L_{\Gamma}}(\tau,\epsilon) \cd L(z).
\]
  {Notice here that the quantity $\rho_{L_{\Gamma}}(\tau,\epsilon)$ is indeed finite since $(\B C\Ga,L_\Ga)$ is a compact quantum metric space, implying that the diameter of the state space is finite (with respect to $\rho_{L_\Ga}$). By \cref{prop:Berezin-image-is-CQMS}, we may conclude that $( \X \rtimes_\alg S_n, L)$ is a compact quantum metric space.}
  
{To verify the remaining condition of \cref{prop:CQMS-if-close-to-CQMS}, we apply \cref{prop:Berezin-norm-Lip-approx} one more time, obtaining the inequalities}
\[
\| \Psi_\ep \Phi_\ep(z) - z\|_\infty = \| \beta_n(z) - z \|_\infty
\leq \rho_{L_{\Gamma}}(\chi_n, \epsilon) \cd L(z)
\leq \varepsilon \cd L(z)
\]
for all $z \in \C X \ralg \Ga$.

Since $\ep >0$ was arbitrary, it follows from \cref{prop:CQMS-if-close-to-CQMS} that $(\C X \ralg \Ga, L)$ is a compact quantum metric space. 
\end{proof}

\section{Slip-norms from length functions}\label{sec:slip-from-length}
{In this section we study slip-norms coming from a length function on the fixed countable discrete group $\Ga$. More precisely, assuming that $A$ is a unital $C^*$-algebra which is equipped with an action of $\Ga$, we shall see that a matrix length function $l \colon \Ga \to M_n(\B C)$ gives rise to a slip-norm $L_l$ on the algebraic crossed product $A \ralg \Ga$. In fact, if such a length function is proper we show how to construct an unbounded Kasparov module from our data and this unbounded Kasparov product then yields the relevant slip-norm in a canonical way. After these preliminary considerations we analyse the slip-norm $L_l$ on $A \ralg \Ga$ in more detail, culminating in a proof of \cref{thm:main-theorem-crossed}. Along the way, we shall also give a tentative definition of a bivariant spectral metric space based on our investigations of the pair $(A \ralg \Ga, L_l)$ and the characterisation of compact quantum metric spaces given in \cite[Theorem 3.1]{Kaad2023}; see \cref{r:bivariant} at the end of \cref{ss:slipslice}.}

\subsection{Unbounded Kasparov modules and slip-norms from length functions}\label{ss:length}
One of Connes' many insights in his seminal paper \cite{Con:CFH} is that {proper} length functions on countable discrete groups provide natural examples of spectral triples. The precise requirements for a length function on countable discrete groups {vary} a bit in the existing literature \cite{Con:CFH, HSWZ:STC, Klisse2023, ChristRieffel2017}, and to ensure optimal flexibility we choose to allow for matrix-valued length functions. As an example of this flexibility, notice that \cite[Remark 2.15]{HSWZ:STC} clarifies that their iteration procedure for crossed products with $\B Z$ can be implemented by allowing for matrix-valued length functions on $\B Z^d$ (for $d$ iterations). After providing the formal definiton, we further illustrate how the matrix valued setting allows for a description of the classical geometry of the 2-torus by means of a length function.
Throughout this subsection, $\Ga$ will be a countable discrete group and $n \in \B N$ is a fixed matrix size. 


\begin{definition}\label{d:length}
A function $l\colon \Gamma \to M_n(\B C)$ is called a \emph{matrix length function} {if the following hold:}
\begin{enumerate}
\item $l(g)$ is selfadjoint for all $g \in \Ga$.
\item $l(g) = 0$ if and only if $g = e$.
\item For all $g\in \Gamma$, the map $\varphi_g\colon \Gamma \to M_n(\B C)$ defined as $\varphi_g(s)=l(s)-l(g^{-1}s)$ is bounded.
\end{enumerate}
A matrix length function $l$ is called \emph{proper} if it satisfies that
\begin{enumerate}\setcounter{enumi}{3}
\item The map $(l^2 + 1)^{-1} \colon \Ga \to M_n(\B C)$ vanishes at infinity. 
\end{enumerate}
Letting $\ga \in M_n(\B C)$ be a selfadjoint unitary element, we say that a matrix length function $l$ is \emph{graded by $\gamma$}  if $l(g)$ anti-commutes with $\ga$ for all $g \in \Ga$.
\end{definition}

The prime example of such a (matrix) length function arises when $\Ga$ is finitely generated and $l$ agrees with the word length with respect to a choice of finite generating set. {However, to see the relevance of matrix length functions as opposed to scalar valued length functions consider the group $\B Z^2$ together with the proper length function $l \colon \B Z^2 \to M_2(\B C)$ given by $l(n,m) := \begin{pmatrix}0 & n + im \\ n - im & 0\end{pmatrix}$. This proper length function is moreover graded by the selfadjoint unitary element $\ga = \begin{pmatrix}1 & 0 \\ 0 & - 1\end{pmatrix}$. Following the constructions in the present subsection we get a graded unital spectral triple on the reduced group $C^*$-algebra $\Cred(\B Z^2)$. In this example, we may identify the corresponding Hilbert space $\ell^2(\B Z^2)^{\op 2}$ with the Hilbert space $L^2(\B T^2)^{\op 2}$, where each summand consists of $L^2$-functions on the $2$-torus. Under this identification, it is not hard to see that the Dirac operator coming from the matrix length function agrees with the classical Dirac operator $\begin{pmatrix} 0 & -i\pa/\pa \te_1 + \pa/\pa \te_2 \\ -i \pa/\pa \te_1 - \pa/\pa \te_2 & 0\end{pmatrix}$ for the $2$-torus (up to passing to the selfadjoint closure). Of course, we could also apply a matrix length function on $\B Z^k$ for $k > 2$ to reconstruct the classical Dirac operator for the $k$-torus as well.} 
\medskip

Let us from now on assume that $l \colon \Gamma \to M_n(\B C)$ is a proper matrix length function and that $\Ga$ acts via $*$-automorphisms on a unital $C^*$-algebra $A$. As in \cref{ss:cross}, we denote the action by $\al$ and the corresponding reduced crossed product by $A \rtimes \Ga$.
We shall see how to construct an unbounded Kasparov module which applies the proper matrix length function to relate the reduced crossed product $A \rtimes \Ga$ and the base $A$. This essentially follows the original construction due to Connes.
For the convenience of the reader, we start out by reviewing the definition of an unbounded Kasparov module from \cite{BaJu:TBK}. To this end, recall that an unbounded operator $D \colon \R{Dom}(D) \to E$ acting on a Hilbert $C^*$-module $E$ is called \emph{selfadjoint and regular} if $D$ is symmetric and $D + i$ and $D - i$ are {surjective; see \cite[Chapter 9]{Lance1995} for more details on these concepts.}  

\begin{definition} \label{def:unbnd-kasparov-module}
  Let $A$ and $B$ be unital $C^*$-algebras. {An} \emph{unbounded Kasparov module} from $B$ to $A$ is given by a triple $(\C B,E,D)$ where:
  \begin{itemize}
  \item[(i)] $\C B \su B$ is a norm-dense unital $*$-subalgebra of $B$.
  \item[(ii)] $E$ is a countably generated $C^*$-correspondence from $B$ to $A$ where the associated $*$-homomorphism $\phi \colon B \to \B L(E)$ is unital and injective.
  \item[(iii)] $D \colon \R{Dom}(D) \to E$ is an unbounded selfadjoint and regular operator acting on $E$.
  \end{itemize}
  This data is subject to the following two conditions:
  \begin{enumerate}
  \item For every $b \in \C B$ it holds that $\phi(b)$ preserves the domain of $D$ and  the operator
$[D,\phi(b)] \colon \R{Dom}(D) \to E$ extends to a bounded adjointable operator $d(b) \colon E \to E$.
\item The resolvent $(D + i)^{-1} \colon E \to E$ is compact (in the sense of Hilbert $C^*$-module theory).
  \end{enumerate}
  We say that an unbounded Kasparov module $(\C B,E,D)$ is \emph{graded} if there exists a selfadjoint unitary element $\ga \in \B L(E)$ such that $\ga \phi(b) = \phi(b) \ga$ for all $b \in B$ and $D\ga = - \ga D$. 
\end{definition}

Returning to the  crossed product situation, we now put $B := A \rtimes \Ga$ and consider the norm-dense unital $*$-subalgebra $\C B$ defined as the smallest $*$-subalgebra of $B$ containing all the bounded adjointable operators $\pi_\al(x) \la_g$, for $x \in A$ and $g \in \Ga$, introduced in \eqref{eq:generator}. In other words, $\C B$ can be identified with the algebraic crossed product $A \rtimes_{\alg} \Ga$. 
Let us represent the reduced crossed product $A \rtimes \Ga$ on the countably generated Hilbert $C^*$-module $E := \ell^2(\Ga,A) \ot \B C^n$ by letting each $z \in A \rtimes \Ga$ {act as $\phi(z) := z \ot 1$}. In this fashion, $E$ becomes a $C^*$-correspondence from $A \rtimes \Ga$ to $A$.
In order to define the relevant unbounded selfadjoint and regular operator $D_l \colon \T{Dom}(D_l) \to E$ we first consider the compactly supported maps $C_c(\Ga,A)$ as a norm-dense right $A$-submodule of $\ell^2(\Ga,A)$. We then introduce the unbounded symmetric operator
\[
\C D_l \colon C_c(\Ga,A) \ot \B C^n \to E \ \text{given by } \ \C D_l( a \de_s \ot \xi) := a \de_s \ot l(s) \xi . 
\]
The assumption that $l(g)$ is selfadjoint for all $g \in \Ga$ entails that $\C D_l + \la i$ has dense image for all $\la \in \B R \sem \{0\}$. Indeed, for $\la \in \B R \sem \{0\}$ and any simple tensor of the form $a \de_s \ot \xi \in C_c(\Ga,A) \ot \B C^n$, we get that
$(\C D_l + \la i)( a \de_s \ot (l(s) + \la i)^{-1} \xi) = a \de_s \ot \xi$. This entails that the image of $\C D_l + \la i$ does in fact agree with $C_c(\Ga,A) \ot \B C^n$. We therefore get that the closure $D_l := \R{cl}(\C D_l)$ is an unbounded selfadjoint and regular operator on $E$.  Remark that if $l$ is graded by a selfadjoint unitary $\ga \in M_n(\B C)$ then we may promote $\ga$ to a selfadjoint unitary operator $1 \ot \ga$ acting on $E = \ell^2(\Ga,A) \ot \B C^n$. The selfadjoint unitary $1 \ot \ga$ then anticommutes with $D_l$ and commutes with $z \ot 1$ for all $z \in A \rtimes \Ga$.\\

Before proving that the data $(\C B,E,D_l)$ is an unbounded Kasparov module, we make a small observation regarding multiplication operators on $E$. 
Let $\psi \colon \Ga \to M_n(A)$ be a {bounded map with respect to the $C^*$-norm on $M_n(A)$. We may then define the multiplication operator $M_\psi \colon E \to E$ by the formula
\begin{equation}\label{eq:mult}
M_\psi( a \de_s \ot e_j) := \sum_{i = 1}^n \psi(s)_{ij} \cd a \de_s \ot e_i ,
\end{equation}
where $\psi(s)_{ij} \in A$ refers to the entry in position $(i,j)$ for the matrix $\psi(s)$. Considering the unital $C^*$-algebra $\ell^\infty(\Ga,M_n(A))$ consisting of all bounded maps from $\Ga$ to the unital $C^*$-algebra $M_n(A)$ we then record that the above multiplication operators yield an injective unital $*$-homomorphism $M : \ell^\infty(\Ga,M_n(A)) \to \B L(E)$. In particular, we get that $\| M_\psi \|_\infty = \sup_{g \in \Ga} \| \psi(g) \|_\infty$ for all $\psi \in \ell^\infty(\Ga,M_n(A))$.

For a finite subset $F \su \Ga$, observe that the finite dimensionality of $\B C^n$ entails that the projection $P_F \colon E \to E$ given by
\[
P_F(a \de_s \ot \xi) = \fork{ccc}{a \de_s \ot \xi & \T{for} & s \in F \\ 0 & \T{for} & s \notin F}
\]
is a compact operator (in the sense of Hilbert $C^*$-module theory). For $\psi \in \ell^\infty(\Ga,M_n(A))$, using this compactness observation in combination with the computation of operator norms given above then yields that $M_\psi$ is a compact operator if and only if $\psi$ belongs to the $C^*$-subalgebra $C_0(\Ga,M_n(A)) \su \ell^\infty(\Ga,M_n(A))$ consisting of functions vanishing at infinity.

In the following sections, we will primarily be interested in the case where $\psi$ takes values in $M_n(\B C)\subseteq M_n(A)$.

\begin{proposition}\label{p:lengthunb}
Suppose that $l \colon \Ga \to M_n(\B C)$ is a proper matrix length function and that $\Gamma$ acts on a unital $C^*$-algebra $A$. Then the triple $\big( A \rtimes_{\alg} \Ga, \ell^2(\Ga,A) \ot \B C^n, D_l\big)$ is an unbounded Kasparov module from $A \rtimes \Ga$ to $A$. Moreover, if $l$ is graded by $\ga$, then the unbounded Kasparov module is graded by $1 \ot \ga$. 
\end{proposition}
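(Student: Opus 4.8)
The claim is that $(A \rtimes_{\alg} \Ga, E, D_l)$ with $E = \ell^2(\Ga,A) \ot \B C^n$ is an unbounded Kasparov module from $A \rtimes \Ga$ to $A$. Conditions (i) and (ii) of \cref{def:unbnd-kasparov-module} are essentially already in place from the construction: $A \rtimes_{\alg} \Ga$ is the norm-dense unital $*$-subalgebra, $E$ is a countably generated $C^*$-correspondence on which $A \rtimes \Ga$ acts via the unital injective $*$-homomorphism $\phi(z) = z \ot 1$, and $D_l = \R{cl}(\C D_l)$ has already been shown to be selfadjoint and regular. So the work is entirely in verifying the two analytic conditions (1) and (2), plus the gradedness addendum.

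\medskip

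For the resolvent condition (2), the plan is to exhibit $(D_l + i)^{-1}$ as a multiplication operator and invoke the compactness criterion established just before the proposition. On simple tensors one computes $(D_l + i)^{-1}(a\de_s \ot \xi) = a \de_s \ot (l(s) + i)^{-1}\xi$, so that $(D_l + i)^{-1} = M_\psi$ where $\psi \colon \Ga \to M_n(\B C)$ is the function $\psi(s) := (l(s) + i)^{-1}$. First I would confirm that $\psi \in \ell^\infty(\Ga, M_n(\B C))$ (boundedness is automatic since $l(s)$ is selfadjoint, giving $\|(l(s)+i)^{-1}\|_\infty \leq 1$), and that this agrees with the genuine resolvent via the identity $(\C D_l + i)(a\de_s \ot (l(s)+i)^{-1}\xi) = a\de_s \ot \xi$ already recorded in the text. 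The properness hypothesis (4) says precisely that $(l^2 + 1)^{-1}$ vanishes at infinity; since $\|(l(s)+i)^{-1}\|_\infty^2 = \|(l(s)^2 + 1)^{-1}\|_\infty$, the function $\psi$ lies in $C_0(\Ga, M_n(\B C)) \su C_0(\Ga, M_n(A))$. By the compactness criterion stated immediately above the proposition, $M_\psi = (D_l + i)^{-1}$ is then a compact operator in the Hilbert $C^*$-module sense, which is exactly condition (2).

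\medskip

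For the commutator condition (1), it suffices by density and linearity to check it on the generators $\phi(\pi_\al(x)\la_g)$ and to realise $[D_l, \phi(b)]$ as a bounded multiplication operator coming from condition (3) of \cref{d:length}. The plan is to compute $[D_l, \phi(\pi_\al(x)\la_g)]$ on a simple tensor $a\de_s \ot \xi \in C_c(\Ga,A) \ot \B C^n$. Using the formula \eqref{eq:generator} for the action of $\pi_\al(x)\la_g$ together with the definition of $\C D_l$, the commutator telescopes so that the length operator acts through the difference $l(gs) - l(s)$, i.e.\ through the bounded map $\varphi_{g^{-1}}$ from \cref{d:length}(3) (evaluated at the appropriate argument). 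Concretely, I expect the commutator to reduce to the multiplication operator $M_\psi$ for a function $\psi$ built from $x$ and the differences $l(s) - l(g^{-1}s)$; condition (3) guarantees $\psi \in \ell^\infty(\Ga, M_n(A))$, so by the norm formula $\|M_\psi\|_\infty = \sup_{g}\|\psi(g)\|_\infty$ the commutator extends to a bounded adjointable operator $d(b)$. The one subtlety to handle carefully is that $[D_l, \phi(b)]$ a priori is only defined on $C_c(\Ga,A) \ot \B C^n$ rather than all of $\Dom(D_l)$; the standard resolution is to verify the bounded-multiplication identity on this core and then observe that $M_\psi$ is the required bounded adjointable extension. \textbf{The main obstacle} I anticipate is bookkeeping in this commutator computation: correctly tracking the $*$-automorphisms $\al_{gs}^{-1}$, the index shift $s \mapsto gs$, and the matrix action of $l$, so that the telescoping genuinely produces the bounded difference $\varphi_{g^{-1}}$ and not an unbounded term. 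Finally, for the graded case, I would simply note that $1 \ot \ga$ is a selfadjoint unitary on $E$, that it commutes with $\phi(z) = z \ot 1$ since $\ga$ acts only on the $\B C^n$ factor while $z$ acts only on $\ell^2(\Ga,A)$, and that $D_l(1\ot\ga) = -(1\ot\ga)D_l$ follows on the core from $l(s)\ga = -\ga l(s)$, which is exactly the grading hypothesis in \cref{d:length}; this is already essentially observed in the paragraph preceding the proposition.
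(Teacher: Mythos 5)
Your proposal is correct and follows essentially the same route as the paper: compute the commutator with a generator $\pi_\al(x)\la_g$ on the core $C_c(\Ga,A)\ot\B C^n$, identify it as the bounded adjointable operator $M_{\varphi_g}\pi_\al(x)\la_g$ (note the paper places the multiplication operator on the left, so the relevant function is $\varphi_g$ with $\varphi_g(gs)=l(gs)-l(s)$, rather than $\varphi_{g^{-1}}$), extend by the standard approximation argument, and obtain compactness of the resolvent by recognising $(D_l+i)^{-1}=M_{(l+i)^{-1}}$ together with properness and the compactness criterion for multiplication operators. The grading argument also matches the paper's.
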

We sketch the well-known proof for convenience of the reader and since we will be needing the concrete formula \eqref{eq:commutator-with-Dl} {for the associated derivation $d_l \colon A \rtimes_\alg \Ga \to \B L(E)$.} We are going to suppress the unital $*$-homomorphism $\phi \colon z \mapsto z \ot 1$ which represents $A \rtimes \Ga$ on $E = \ell^2(\Ga,A) \ot \B C^n$.

\begin{proof}
For each $g \in \Ga$, consider the multiplication operator $M_{\varphi_g}$ associated with the bounded function $\varphi_g\colon \Gamma \to M_n(\B C)$ given by $\varphi_g(s)=l(s)-l(g^{-1}s)$. 
  Each generator $\pi_\al(x)\lambda_g$ of the algebraic crossed product $A \rtimes_\alg \Ga$ preserves the domain of $\C D_l$ and a direct computation verifies that $[\C D_l, \pi_\al(x)\lambda_g ](\xi) = M_{\varphi_g}\pi_\al(x) \lambda_g(\xi)$ for all elements $\xi \in \T{Dom}(\C D_l) = C_c(\Ga,A) \ot \B C^n$. Since $M_{\varphi_g}\pi_\al(x) \lambda_g$ is a bounded adjointable operator on $E$, a standard approximation argument shows that $\pi_\al(x)\lambda_g$ also preserves the domain of $D_l$ and that $[D_l, \pi_\al(x)\lambda_g ](\xi) =M_{\varphi_g} \pi_\al(x)\lambda_g(\xi)$ for all $\xi \in \T{Dom}(D_l)$. {In particular, we get the formula
\begin{align}\label{eq:commutator-with-Dl}
d_l( \pi_\al(x) \la_g ) = \T{cl}\big( [D_l, \pi_\al(x) \la_g]\big) = M_{\varphi_g} \pi_\al(x)\lambda_g .
\end{align}}
It follows that every $z$ in $A \rtimes_\alg \Ga$ preserves the domain of $D_l$ and that $[D_l, z]$ extends to a bounded adjointable operator, as claimed.
To finish the proof, we notice that the resolvent $(D_l + i)^{-1}$ agrees with the multiplication operator $M_{(l + i)^{-1}} \colon E \to E$. Since the matrix length function $l \colon \Ga \to M_n(\B C)$ is assumed to be proper we get that $(l + i)^{-1} \colon \Ga \to M_n(\B C)$ vanishes at infinity, and by the observations preceding this proposition we may thus conclude that $(D_l + i)^{-1}$ is compact.
\end{proof}

Applying the above proposition we obtain a seminorm $L_l \colon A \rtimes_\alg \Ga \to [0,\infty)$ by setting $L_l(z) := \| d_l(z)\|_\infty$, where $d_l(z)$ agrees with the closure of the commutator $[D_l,z]$, {which is a priori only defined on $\T{Dom}(D_l)$}. The next result shows that $L_l$ is a slip-norm, but that the pair $\big( A \rtimes_\alg \Ga, L_l \big)$ can never be a compact quantum metric space unless $A = \B C$. As always, we view $A$ as a $C^*$-subalgebra of $A \rtimes \Ga$ via the identification $x \mapsto \pi_\al(x) \la_e$.  

  \begin{proposition}\label{p:kernel}
The seminorm $L_l \colon A \rtimes_\alg \Ga \to [0,\infty)$ is a slip-norm with $\ker L_l = A$.
  \end{proposition}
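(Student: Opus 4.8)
The plan is to dispatch the slip-norm axioms first and then determine the kernel by evaluating the derivation $d_l$ on well-chosen vectors of $E=\ell^2(\Ga,A)\ot\B C^n$. For $*$-invariance I would use that $D_l$ is selfadjoint: for $z\in A\ralg\Ga$ both $z$ and $z^*$ preserve $\Dom(D_l)$ with bounded commutators, and the identity $\inn{[D_l,z]\eta,\zeta}=-\inn{\eta,[D_l,z^*]\zeta}$ for $\eta,\zeta\in\Dom(D_l)$ yields $d_l(z)^*=-d_l(z^*)$ after passing to closures; hence $L_l(z^*)=\|d_l(z)^*\|_\infty=\|d_l(z)\|_\infty=L_l(z)$. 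For the inclusion $A\su\ker L_l$ (which in particular gives $\B C\su\ker L_l$, completing the verification that $L_l$ is a slip-norm) I would invoke the commutator formula \eqref{eq:commutator-with-Dl}: an element $x\in A$ is $\pi_\al(x)\la_e$, and the associated function $\varphi_e(s)=l(s)-l(s)$ is identically zero, so $d_l(x)=M_{\varphi_e}\pi_\al(x)\la_e=0$ and thus $L_l(x)=0$.

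The core of the proposition is the reverse inclusion $\ker L_l\su A$. I would take $z=\sum_g\pi_\al(x_g)\la_g\in A\ralg\Ga$ (a finite sum, with coefficients uniquely determined by $z$) satisfying $d_l(z)=0$, and test the vanishing operator $d_l(z)=\sum_g M_{\varphi_g}\pi_\al(x_g)\la_g$ against the vectors $\de_e\ot\xi$ for $\xi\in\B C^n$. From \eqref{eq:generator} one has $\pi_\al(x_g)\la_g(\de_e\ot\xi)=\al_g^{-1}(x_g)\de_g\ot\xi$, and since $\varphi_g(g)=l(g)-l(e)=l(g)$, applying $M_{\varphi_g}$ gives
\[
d_l(z)(\de_e\ot\xi)=\sum_g\al_g^{-1}(x_g)\,\de_g\ot l(g)\xi .
\]
Because the terms sit at the pairwise distinct positions $g\in\Ga$ of $\ell^2(\Ga,A)$, vanishing of the whole expression forces $\al_g^{-1}(x_g)\de_g\ot l(g)\xi=0$ for each $g$ and every $\xi$; as $\al_g^{-1}$ is an automorphism, this shows that $x_g\neq0$ would require $l(g)\xi=0$ for all $\xi$, i.e.~$l(g)=0$. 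By non-degeneracy (axiom (2) of \cref{d:length}) this happens only when $g=e$, so $x_g=0$ for all $g\neq e$ and therefore $z=\pi_\al(x_e)\la_e\in A$.

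The only delicate point—and the main obstacle, albeit a mild one—is the coefficient extraction in the final step: I must argue that a finitely supported element $\sum_g c_g\de_g\ot\eta_g$ of $E$ (with $c_g\in A$ and $\eta_g\in\B C^n$) can vanish only if each summand $c_g\de_g\ot\eta_g$ does, and that such a summand vanishes precisely when $c_g\eta_{g,i}=0$ for all coordinates $i$. Granting this elementary fact, the decisive structural input is simply the non-degeneracy of $l$, used at $s=e$: beyond the commutator formula \eqref{eq:commutator-with-Dl} (itself resting on axiom (3)), the identification of the kernel invokes neither properness nor any further property of $l$.
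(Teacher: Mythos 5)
Your argument is correct and follows essentially the same route as the paper: $*$-invariance via $d_l(z^*)=-d_l(z)^*$, the inclusion $A\su\ker L_l$ from $\varphi_e=0$, and the reverse inclusion by evaluating $d_l(z)$ on $\de_e\ot\xi$ and invoking axiom (2) of \cref{d:length}. The only (immaterial) difference is that the paper extracts the coefficients $\al_t^{-1}(x_t)$ by pairing with $\de_t\ot\eta$ in the $A$-valued inner product, whereas you argue directly from the support decomposition of a finitely supported element of $\ell^2(\Ga,A)\ot\B C^n$; both settle the ``delicate point'' you flag.
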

  \begin{proof}
    It clearly holds that $L_l$ is a slip-norm with $A \su \ker L_l$; the $*$-invariance follows since $d_l(z^*) = - d_l(z)^*$ for all $z \in A \rtimes_\alg \Ga$. Suppose thus that a finite sum of the form $z = \sum_{g \in \Ga} \pi_\al(x_g) \la_g$ belongs to $\ker L_l$. Using the formula from \eqref{eq:commutator-with-Dl} we then see that
    \[
    0 = d_l(z)( \de_e \ot \xi)
    = \sum_{g \in \Ga} \al_g^{-1}(x_g) \de_g \ot {l(g)} \xi
\]
for all $\xi \in \B C^n$. By forming the inner product with vectors of the form $\delta_t\otimes \eta$ it follows that {$\inn{\eta, l(t)\xi}\cdot\alpha_t^{-1}(x_t)=0$} for all $\xi,\eta\in \B C^n$ and all $t\in \Gamma$.
Using condition $(2)$ from \cref{d:length}, it now follows that {$x_t = 0$ for all $t \in \Ga \sem \{e\}$, meaning that $z \in A$}. 
  \end{proof}


  Let us end this section by briefly discussing the special case where $A = \B C$. In this setting we get that the reduced crossed product $A \rtimes \Ga$ agrees with the reduced group $C^*$-algebra $\Cred(\Ga)$ and the algebraic crossed product $A \rtimes_\alg \Ga$ agrees with the group algebra $\B C \Ga$. Moreover, the unbounded Kasparov module from \cref{p:lengthunb} becomes a unital spectral triple {on} $\Cred(\Ga)$ of the form $\big( \B C \Ga, \ell^2(\Ga) \ot \B C^n, D_l \big)$.  It is therefore natural to ask if this unital spectral triple becomes a {spectral metric space} in the sense of \cref{d:sms}. This turns out to be a difficult question to address in full generality, but
there are interesting cases where it is known to be true. For example, if $\Gamma$ is a finitely generated group which is either word-hyperbolic or of polynomial growth, then the usual word-length function arising from a finite generating set gives rise to a spectral metric space; see  \cite{ChristRieffel2017} and \cite{OzawaRieffel2005}, respectively.
 To the best of our knowledge no counterexample is known.

 \subsection{The Berezin transform is a slip-norm contraction}\label{ss:Berezin-L_l-contractive}
{We continue in the setting where $l \colon \Ga \to M_n(\B C)$ is a proper matrix length function on our countable discrete group $\Ga$. As usual, we also consider an action of $\Ga$ on a unital $C^*$-algebra $A$. The point of this section is to investigate the behaviour of the Berezin transforms from \cref{def:Berezin-transform-on-crossed-product} with respect to the slip-norm $L_l \colon A \rtimes_\alg \Ga \to [0,\infty)$, which comes from the unbounded Kasparov module $(A \rtimes_\alg \Ga, \ell^2(\Ga,A) \ot \B C^n,D_l)$.}

Let us fix a finite non-empty subset $F \su \Ga$ together with the associated unit vector $\xi_F \in \ell^2(\Ga)$ defined in \eqref{eq:unitvector}. This unit vector then induces a contractive linear map
\[
T_F \colon \ell^2(\Ga,A) \ot \B C^n \to ( \ell^2(\Ga,A) \ot \B C^n) \hot \ell^2(\Ga) \ \text{ given by } \ T_F(\eta) := \eta \ot \xi_F,
\]
and hence an associated unital contraction
\[
1 \ot \chi_F \colon \B L\big( (\ell^2( \Ga,A) \ot \B C^n) \hot \ell^2(\Ga)\big) \to \B L( \ell^2(\Ga,A) \ot \B C^n) 
\]
defined by the formula $(1 \ot \chi_F)(R) := T_F^* R T_F$. {Note that by viewing the minimal tensor product} $\B L(\ell^2( \Ga,A)\ot \B C^n)\otimes_{\min} \Cred(\Gamma)$ {as a unital $C^*$-subalgebra} of $\B L\big( (\ell^2( \Ga,A)\ot \B C^n) \hot \ell^2(\Ga)\big)$, the map $1 \ot \chi_F$ agrees with the usual slice map on the right tensor leg, thus justifying the notation.

We also introduce the unitary operator $W$ on $(\ell^2(\Ga,A) \ot \B C^n) \hot \ell^2(\Ga)$ given by the formula
\[
W(a\de_s \ot \xi \ot \de_t) := a \de_s \ot \xi \ot \de_{st}, 
\]
and, just as in \cref{ss:cross}, it can be verified that $W$ implements the dual coaction $\delta\colon A\rtimes \Gamma \to (A\rtimes \Gamma) \otimes_{\min} \Cred(\Gamma)$ from \eqref{eq:coaction-through-W}. {Notice here that $A\rtimes \Gamma$ is represented on $\ell^2(\Ga,A) \ot \B C^n$ via the unital injective $*$-homomorphism $z \mapsto z \ot 1$, so that the minimal tensor product is represented faithfully on $(\ell^2(\Ga,A) \ot \B C^n) \hot \ell^2(\Ga)$.} 

A combination of the above operations allows us to extend the Berezin transform $\be_F \colon A \rtimes \Ga \to A \rtimes \Ga$ from  \cref{def:Berezin-transform-on-crossed-product} to a unital contraction
\begin{equation}\label{eq:extendber}
  \begin{split}
  & \be_F \colon \B L( \ell^2(\Ga,A)\ot \B C^n) \to \B L( \ell^2(\Ga,A)\ot \B C^n) \ \text{ given by } \\
  & \be_F(R) := (1 \ot \chi_F) \big( W (R \ot 1) W^*\big) .
  \end{split}
\end{equation}
 It turns out that the extended Berezin transform is a homomorphism in a sense which we will now explain. To this end, consider a map $\psi \colon \Ga \to M_n(A)$ which is bounded with respect to the $C^*$-norm on $M_n(A)$ and recall the formula from \eqref{eq:mult} for the corresponding multiplication operator $M_\psi \in \B L\big( \ell^2(\Ga,A) \ot \B C^n \big)$. The precise statement now is the following:

  \begin{lemma}\label{l:module}
Let $\psi \colon \Ga \to M_n(A)$ be a bounded map. It holds that $\be_F( M_\psi R) = M_\psi \be_F(R)$ for all $R \in \B L\big( \ell^2(\Ga,A) \ot \B C^n \big)$.
\end{lemma}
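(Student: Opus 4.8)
The plan is to reduce the identity to two elementary commutation relations for the multiplication operator $M_\psi$ — both checkable directly on simple tensors — and then to assemble them using the definition \eqref{eq:extendber} of the extended Berezin transform. The guiding principle is that $M_\psi$ acts only on the left tensor leg $\ell^2(\Ga,A) \ot \B C^n$ and is ``diagonal'' with respect to the $\Ga$-grading, so that it commutes with everything happening on the auxiliary leg $\ell^2(\Ga)$.

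First I would record that $M_\psi \ot 1$ commutes with the unitary $W$. Indeed, on a simple tensor $a \de_s \ot e_j \ot \de_t$ both $W(M_\psi \ot 1)$ and $(M_\psi \ot 1)W$ return $\sum_{i=1}^n \psi(s)_{ij}\, a\de_s \ot e_i \ot \de_{st}$: the operator $M_\psi$ leaves the group index $s$ of the left leg untouched (merely multiplying by $\psi(s)$), while $W$ only shifts the right leg index $t \mapsto st$, so the two do not interfere. Equivalently, $W(M_\psi \ot 1)W^* = M_\psi \ot 1$, i.e.\ $\delta(M_\psi) = M_\psi \ot 1$ in the notation of \eqref{eq:coaction-through-W}. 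Next I would verify $T_F^*(M_\psi \ot 1) = M_\psi T_F^*$; it is cleanest to check the adjoint statement $(M_\psi \ot 1)T_F = T_F M_\psi$, which is immediate since for $\eta \in \ell^2(\Ga,A)\ot\B C^n$ one has $(M_\psi \ot 1)T_F(\eta) = M_\psi\eta \ot \xi_F = T_F M_\psi\eta$, the operator $T_F$ being nothing but tensoring with the fixed vector $\xi_F$. Passing to adjoints (and using that this holds for every bounded map, in particular for $\psi$ with entries replaced by their adjoints) yields the claimed relation.

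Finally I would assemble the computation. Since $R \mapsto W(R \ot 1)W^*$ is a unital $*$-homomorphism, the first relation gives $W(M_\psi R \ot 1)W^* = (M_\psi \ot 1)\, W(R \ot 1)W^*$; applying $(1\ot\chi_F)(\cdot) = T_F^*(\cdot)T_F$ and then pulling $M_\psi$ through $T_F^*$ via the second relation produces
\[
\be_F(M_\psi R) = T_F^*(M_\psi \ot 1)W(R\ot 1)W^* T_F = M_\psi\, T_F^* W(R\ot 1)W^* T_F = M_\psi \be_F(R).
\]
I do not expect a genuine obstacle here. The only subtlety worth flagging is that the commutation $W(M_\psi \ot 1) = (M_\psi \ot 1)W$ crucially uses that $M_\psi$ is diagonal with respect to the $\Ga$-grading on the left leg — it would fail for an operator that moves the index $s$ — so this is the step to state with care rather than a real difficulty.
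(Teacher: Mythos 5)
Your proof is correct and follows exactly the same route as the paper, which simply cites the two identities $W(M_\psi \ot 1) = (M_\psi \ot 1)W$ and $T_F^*(M_\psi \ot 1) = M_\psi T_F^*$ and notes that the claim follows from the definition of the extended Berezin transform; you merely supply the (routine) verifications on simple tensors and the adjoint trick for the second identity.
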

\begin{proof}
This follows immediately from the definition of the extended Berezin transform and the two identities $W (M_\psi \ot 1) = (M_\psi \ot 1) W$ and $T_F^* (M_\psi \ot 1) = M_\psi T_F^*$. 
\end{proof}


As an application of \cref{l:module} we obtain that the Berezin transform is a contraction with respect to the slip-norm $L_l$.

\begin{proposition}\label{p:bersc}
For each finite, non-empty subset $F\subseteq \Gamma$, it holds that $L_l\big( \beta_F(z) \big) \leq L_l(z)$ for all $z \in A \ralg \Ga$.
\end{proposition}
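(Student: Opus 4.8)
The plan is to reduce the statement about the slip-norm $L_l$ to the module property of the extended Berezin transform established in \cref{l:module}. Recall that $L_l(z) = \| d_l(z) \|_\infty$ where $d_l(z)$ is the closure of the commutator $[D_l, z]$, and that by \eqref{eq:commutator-with-Dl} the derivation is given on generators by $d_l(\pi_\al(x)\la_g) = M_{\ph_g} \pi_\al(x) \la_g$, where $\ph_g(s) = l(s) - l(g^{-1}s)$ is the bounded function from \cref{d:length}. The key structural insight is that the extended Berezin transform $\be_F$ from \eqref{eq:extendber} acts on all of $\B L(\ell^2(\Ga,A) \ot \B C^n)$ and restricts to the original Berezin transform on $A \rtimes \Ga$, so I can analyse $d_l(\be_F(z))$ by interweaving $\be_F$ with $d_l$.

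First I would establish the crucial commutation relation $d_l(\be_F(z)) = \be_F(d_l(z))$ for all $z \in A \ralg \Ga$. To do this I would compute directly on a generator $z = \pi_\al(x)\la_g$. On the one hand, from \eqref{eq:berezin} we have $\be_F(\pi_\al(x)\la_g) = \chi_F(\la_g) \cdot \pi_\al(x)\la_g$, so $d_l(\be_F(\pi_\al(x)\la_g)) = \chi_F(\la_g) \cdot M_{\ph_g}\pi_\al(x)\la_g$ by \eqref{eq:commutator-with-Dl}. On the other hand, $\be_F(d_l(\pi_\al(x)\la_g)) = \be_F(M_{\ph_g}\pi_\al(x)\la_g)$, and here I invoke \cref{l:module}: since $\ph_g \colon \Ga \to M_n(\B C) \su M_n(A)$ is a bounded map, we get $\be_F(M_{\ph_g}\pi_\al(x)\la_g) = M_{\ph_g}\be_F(\pi_\al(x)\la_g) = M_{\ph_g} \cdot \chi_F(\la_g) \pi_\al(x)\la_g$. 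These two expressions agree, so by linearity $d_l \ci \be_F = \be_F \ci d_l$ on the algebraic crossed product.

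With the intertwining relation in hand, the conclusion is immediate: for any $z \in A \ralg \Ga$ we have
\[
L_l(\be_F(z)) = \| d_l(\be_F(z)) \|_\infty = \| \be_F(d_l(z)) \|_\infty \leq \| d_l(z) \|_\infty = L_l(z),
\]
where the inequality uses that the extended Berezin transform $\be_F \colon \B L(\ell^2(\Ga,A)\ot \B C^n) \to \B L(\ell^2(\Ga,A)\ot \B C^n)$ is a unital contraction, as noted right after \eqref{eq:extendber} (it is a composition of the slice map $1 \ot \chi_F$ with conjugation by the unitary $W$, both of which are contractive).

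The main obstacle, and the place requiring genuine care, is the first step: verifying that the module property \cref{l:module} applies cleanly to the multiplication operator $M_{\ph_g}$ and that the bookkeeping of the scalar factor $\chi_F(\la_g)$ is consistent on both sides. In particular I must be sure that $d_l(\be_F(z))$ is legitimately computed as the closure of the commutator of $D_l$ with an element that still lies in $A \ralg \Ga$ — which is guaranteed by \cref{l:imaber}, since $\be_F(A \ralg \Ga) = A \ralg S$ for the finite set $S = F F^{-1}$, so all commutators involved are bona fide closures of commutators on $\T{Dom}(D_l)$ and the formula \eqref{eq:commutator-with-Dl} remains applicable. Everything else is routine, so the proof is short once the intertwining is in place.
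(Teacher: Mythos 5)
Your proof is correct and follows essentially the same route as the paper: both establish the intertwining identity $d_l\circ\beta_F=\beta_F\circ d_l$ on generators via \eqref{eq:commutator-with-Dl}, \eqref{eq:berezin} and \cref{l:module}, and then conclude from the contractivity of the extended Berezin transform \eqref{eq:extendber}. The only cosmetic difference is that you pull out just $M_{\varphi_g}$ with \cref{l:module} and evaluate $\beta_F(\pi_\alpha(x)\lambda_g)$ directly from \eqref{eq:berezin}, whereas the paper also moves $\pi_\alpha(x)$ through the transform; both are valid.
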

\begin{proof}
{Let $z \in A \ralg \Ga$ be given.} Consider again the derivation $d_l\colon \C A\rtimes_{\alg} \Gamma \to \B L\big(\ell^2(\Gamma, A) \ot \B C^n\big)$ mapping $z$ to the closure of the commutator $[D_l,z]$. We are going to show that $\be_F d_l(z) = d_l \be_F(z)$, where the Berezin tranform on the left hand side is the extended version defined in \eqref{eq:extendber}. Since $L_l(z)=\|d_l(z)\|_\infty$ and $\beta_F$ is a norm contraction, the claimed result follows from this.

{To prove the desired identity, let $g \in \Ga$ and $x \in A$ and apply \eqref{eq:commutator-with-Dl} and \eqref{eq:berezin} together with \cref{l:module} to get that
  \[
  \begin{split}
  \be_F d_l( \pi_\al(x) \la_g) & = \be_F( M_{\varphi_g} \pi_\al(x) \la_g)
  = M_{\varphi_g} \pi_\al(x) \be_F(\la_g) \\
  & = M_{\varphi_g} \pi_\al(x) \la_g \cd \inn{\xi_F,\la_g \xi_F}
  = d_l( \pi_\al(x) \la_g \cd \inn{\xi_F,\la_g \xi_F})
  = d_l \be_F(\pi_\al(x) \la_g) .
  \end{split}
  \]
  The relevant identity $\be_F d_l(z) = d_l \be_F(z)$ now follows by linearity and the above computation.}
\end{proof}


\subsection{The slip-norm and slice maps}\label{ss:slipslice}
{Recall that the countable discrete group $\Ga$ has been equipped with a matrix length function $l \colon \Ga \to M_n(\B C)$ and that $\Ga$ acts on the unital $C^*$-algebra $A$.}

The aim of this subsection is to prove the following proposition which describes the behaviour of the slip-norm $L_l$ under slice maps. Note that in the inequality displayed in the proposition here below, we are strictly speaking referring to two different slip-norms: On the left hand side  $L_l \colon \B C \Ga \to [0,\infty)$ comes from the unital spectral triple $( \B C \Ga, \ell^2(\Ga)\ot \B C^n, D_l)$ and on the right hand side $L_l \colon A \ralg \Ga \to [0,\infty)$ comes from the unbounded Kasparov module $\big( A \ralg \Ga, \ell^2(\Ga,A)\ot \B C^n, D_l\big)$. Both of these ingredients were explained in \cref{ss:length}. However, applying the same notation is justified by recording that $L_l \colon \B C \Ga \to [0,\infty)$ can also just be viewed as a restriction of $L_l \colon A \ralg \Ga \to [0,\infty)$ to the unital $*$-subalgebra $\B C \Ga \su A \ralg \Ga$. The proof of the proposition requires some preparations and we present it at the end of this subsection. 

\begin{proposition}\label{p:slice}
  Suppose that $l \colon \Ga \to M_n(\B C)$ is a proper matrix length function and that $\eta \colon A \rtimes \Ga \to \B C$ is a norm contraction. Then we have the inequality 
  \[
 L_l\big( (\eta \otimes 1) \delta(z) \big) \leq L_l(z) \quad \mbox{for all } z \in A \ralg \Ga .
\]
\end{proposition}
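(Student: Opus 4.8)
The plan is to reduce the statement to a norm inequality for the derivation $d_l$ and then to settle that inequality by an automorphism argument inside an auxiliary reduced crossed product. Writing $z = \sum_g \pi_\al(x_g)\la_g$ as a finite sum and putting $w := (\eta \ot 1)\delta(z) = \sum_g \eta(\pi_\al(x_g)\la_g)\la_g \in \B C\Ga$, I would first record the key identity
\[
(\eta \ot 1)\Delta(z) = [D_l, w], \qquad \Delta(z) := \textstyle\sum_g \pi_\al(x_g)\la_g \ot M_{\varphi_g}\la_g,
\]
where $M_{\varphi_g}\la_g = d_l(\la_g)$ is computed on $\ell^2(\Ga)\ot\B C^n$ for the group spectral triple, so that $\Delta(z) \in (A\rtimes\Ga)\otm \B L(\ell^2(\Ga)\ot\B C^n)$; this is immediate from \eqref{eq:commutator-with-Dl} and the formula for $\delta$. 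Since $\eta$ is contractive, the slice map $\eta \ot 1$ is a contraction on the minimal tensor product, and hence $L_l(w) = \|[D_l,w]\|_\infty \leq \|\Delta(z)\|_\infty$. The proposition therefore follows once I establish $\|\Delta(z)\|_\infty \leq \|d_l(z)\|_\infty = L_l(z)$; in fact I expect equality.

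Representing the minimal tensor product faithfully on $\ell^2(\Ga,A)\hot(\ell^2(\Ga)\ot\B C^n)$, a direct computation shows that $\Delta(z)$ leaves invariant each closed submodule on which the quantity $s^{-1}t$ is constant, where $s,t$ denote the two $\Ga$-indices. Identifying the submodule attached to $r \in \Ga$ with $\ell^2(\Ga,A)\ot\B C^n$ via $a\de_s\hot\de_{sr}\ot e_j \mapsto a\de_s\ot e_j$, one finds $\Delta(z) = \bigoplus_{r\in\Ga} C_r$ with $C_r = [D_l^{(r)}, z]$, where $D_l^{(r)}$ is the diagonal multiplication operator $a\de_s\ot\xi \mapsto a\de_s\ot l(sr)\xi$. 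In particular $C_e = d_l(z)$, so $\|\Delta(z)\|_\infty = \sup_r \|C_r\|_\infty$, and it remains to prove $\|C_r\|_\infty = \|d_l(z)\|_\infty$ for every $r$. This is the crux and the main obstacle: although $D_l^{(r)} = U_r^* D_l U_r$ for the right-translation unitary $U_r(a\de_s\ot\xi) = a\de_{sr}\ot\xi$, conjugation by $U_r$ does \emph{not} preserve $A\rtimes\Ga$ (it twists coefficients by $\al_r$), since $\al$ is in general outer; thus $C_r$ is genuinely not of the form $d_l(\text{element of }A\rtimes\Ga)$ and no naive unitary equivalence is available.

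To overcome this I would pass to the reduced crossed product $\mathcal D \rtimes \Ga$, where $\mathcal D := \ell^\infty(\Ga, M_n(A))$ carries the $\Ga$-action by left translation. The pointwise multiplication representation of $\mathcal D$ together with the left regular representation of $\Ga$ is unitarily equivalent to the regular representation of $\mathcal D \rtimes \Ga$ on $\ell^2(\Ga,A)\ot\B C^n$, hence isometric for the reduced norm. Under this identification both operators lie in $\mathcal D \rtimes \Ga$: using \eqref{eq:commutator-with-Dl} one computes $d_l(z) = \sum_g f_g \la_g$ with $f_g(s) = \varphi_g(s)\,\al_s^{-1}(x_g)$, and likewise $C_r = \sum_g f_g^{r}\la_g$ with $f_g^{r}(s) = \varphi_g(sr)\,\al_s^{-1}(x_g)$. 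Now right translation $R_r(f)(s) := f(sr)$ and the pointwise automorphism $\sigma_r := \id_{M_n}\ot\,\al_r$ are both $\Ga$-equivariant $*$-automorphisms of $\mathcal D$, so they induce $*$-automorphisms of $\mathcal D \rtimes \Ga$; the short check $\sigma_r R_r(f_g)(s) = \varphi_g(sr)\al_r\big(\al_{sr}^{-1}(x_g)\big) = \varphi_g(sr)\al_s^{-1}(x_g) = f_g^{r}(s)$ shows that the induced automorphism $\widehat{\sigma_r}\,\widehat{R_r}$ carries $d_l(z)$ to $C_r$. As $*$-automorphisms are isometric, $\|C_r\|_\infty = \|d_l(z)\|_\infty$ for all $r$, whence $\|\Delta(z)\|_\infty = \|d_l(z)\|_\infty$ and the proposition follows.
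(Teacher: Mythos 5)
Your reduction is the same as the paper's: the element you call $\Delta(z)$ is exactly $(1\ot d_l)(\de(z))$, the slice-map step $L_l(w)\leq\|\Delta(z)\|_\infty$ is identical, and your block decomposition of $\Delta(z)$ over the shifted diagonals $\{(s,sr):s\in\Ga\}$ is precisely the paper's conjugation by $W$, which turns $(1\ot d_l)(\de(z))$ into the diagonal operator with blocks $d_l^r(z)=[D_l^r,z]$ (your $C_r$). Where you diverge is the crux, $\|C_r\|_\infty=\|d_l(z)\|_\infty$. Here you dismiss the ``naive unitary equivalence'' too quickly: it is true that $\rho_r$ alone does not conjugate $C_r$ into $d_l(\text{something in }A\rtimes\Ga)$, but you do not need it to — you only need a surjective isometry of $\ell^2(\Ga,A)\ot\B C^n$ conjugating $C_r$ onto $d_l(z)$, and the paper obtains one by composing $\rho_r$ with the coefficient-twisting isometry $U_r(a\de_s\ot\xi)=\al_r(a)\de_s\ot\xi$, which commutes with $M_{\varphi_g}$ and $\la_g$ and conjugates $\pi_\al(x)$ to $\rho_r^{-1}\pi_\al(x)\rho_r$; this gives $\rho_r^{-1}C_r\rho_r=U_rd_l(z)U_r^{-1}$ in two lines. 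Your alternative — realising $d_l(z)$ and $C_r$ inside $\ell^\infty(\Ga,M_n(A))\rtimes\Ga$ and moving one to the other by the automorphism induced from the equivariant automorphism $\si_r R_r$ of the coefficients — is a correct and conceptually appealing repackaging (indeed $\widehat{R_r}$ and $\widehat{\si_r}$ are spatially implemented by exactly $\rho_r$ and $U_r$), but it is heavier machinery for the same computation.

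The one place where your argument has a real hole is the assertion that the covariant representation of $\ell^\infty(\Ga,M_n(A))\rtimes\Ga$ on $\ell^2(\Ga,A)\ot\B C^n$ is ``unitarily equivalent to the regular representation, hence isometric for the reduced norm.'' As stated this is not literally true (the regular representation lives on $\ell^2(\Ga)\hot\ell^2(\Ga,A)\ot\B C^n$), and the correct statement — that the regular representation induced from the diagonal representation of $\ell^\infty(\Ga,M_n(A))$ is unitarily equivalent to an \emph{amplification} of your representation, via the unitary $\de_s\ot\de_t\mapsto\de_{st}\ot\de_t$ in the group directions (the uniform-Roe-algebra picture with coefficients) — requires proof or a reference. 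You also implicitly use that $\Ga$-equivariant automorphisms of the coefficient algebra descend isometrically to the \emph{reduced} crossed product; this is standard but should be said. Both facts are true, so your proof can be completed, but as written the key norm identity rests on two unproved claims that the paper's spatial argument avoids entirely.
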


{To} every $g \in \Ga$ we associate two unitary operators $\lambda_g$ and $\rho_g \colon \ell^2(\Ga,A)\ot \B C^n \to \ell^2(\Ga,A)\ot \B C^n$ defined by
\[ 
{\lambda_g (a \delta_s \ot \xi) = a\delta_{gs} \ot \xi \quad \text{and } \quad \rho_g (a \delta_s \ot \xi) = a \delta_{sg^{-1}} \ot \xi .}
\]
These two unitary operators determine the left regular and the right regular representation (respectively) of the {countable} discrete group $\Ga$ in the special case where $A = \B C$ and $n = 1$.

For every $t \in \Ga$, we define the essentially selfadjoint and regular unbounded operator $\C D_l^t \colon C_c(\Ga,A)\ot \B C^n \to \ell^2(\Ga,A)\ot \B C^n$ by the formula
\[
{\C D_l^t( a \delta_s \ot \xi) = a \delta_s \ot l(st) \cd \xi} 
\]
and note that $\C D_l^e$ agrees with the unbounded operator $\C D_l$ introduced in \cref{ss:length}. The closure of $\C D_l^t$ is denoted by $D_l^t$. We observe that $\rho_t D_l \rho_t^{-1} = D_l^t$, so $D_l^t$ and $D_l$ are unitarily equivalent. 
The {sequence} of unbounded operators $(\C D_l^t)_{t\in \Gamma}$ can be assembled into a single essentially selfadjoint and regular unbounded operator
{\[
\begin{split}
& \tilde{\C D}_l \colon C_c(\Ga,A)\ot \B C^n \ot C_c(\Ga) \to (\ell^2(\Ga,A)\ot \B C^n) \hot \ell^2(\Ga) \ \text{ given by } \\
  & \tilde{\C D}_l(a \delta_s \ot \xi \otimes \delta_t) := \C D_l^t(a\delta_s \ot \xi) \otimes \delta_t .
\end{split}
\]}
The closure of $\tilde{\C D}_l$ is denoted by $\tilde{D}_l$. We shall also need the essentially selfadjoint and regular unbounded operator
{\[
\begin{split}
& 1 \ot \C D_l \colon C_c(\Ga,A) \ot C_c(\Ga)\ot \B C^n \to \ell^2(\Ga,A) \hot ( \ell^2(\Ga)\ot \B C^n ) \ \text{ given by } \\ 
& (1 \ot \C D_l)(a \delta_s \otimes \delta_t \ot \xi) := a\delta_s \otimes \de_t \ot l(t)\cd \xi .
\end{split}
\]}
The closure of $1 \ot \C D_l$ is denoted by $1 \hot D_l$.

Consider the unitary operator
\[
\begin{split}
& W \colon ( \ell^2(\Ga,A)\ot \B C^n) \hot \ell^2(\Ga)  \to \ell^2(\Ga,A) \hot ( \ell^2(\Ga)\ot \B C^n) \ \text{ given by } \\
& W(a \de_s \ot \xi \ot \de_t) := a \de_s \ot \de_{st} \ot \xi .
\end{split}
\]
We represent the minimal tensor product $(A \rtimes \Ga) \otm \Cred(\Ga)$ faithfully on the Hilbert $C^*$-module $\ell^2(\Ga,A) \hot (\ell^2(\Ga)\ot \B C^n)$ using the defining representation for $A \rtimes \Ga$ and the faithful representation of $\Cred(\Ga)$ on $\ell^2(\Ga)\ot \B C^n$ given by $y \mapsto y \ot 1$. As in \eqref{eq:coaction-through-W}, we may still describe the dual coaction $\de \colon A \rtimes \Ga \to (A \rtimes \Ga) \otm \Cred(\Ga)$ via conjugation with $W$, meaning that
\begin{equation}\label{eq:coactW}
\de(z) = W (z \ot 1 \ot 1) W^* \quad \T{for all } z \in A \rtimes \Ga .
\end{equation}
The unitary operator $W$ also describes the relationship between $\tilde{D}_l$ and {$1 \hot D_l$} in so far that
\begin{equation}\label{eq:tildenon}
W \tilde{\C D}_l W^*(\ze) = (1 \ot \C D_l) (\ze)
\end{equation}
for all elements $\ze$ belonging to $C_c(\Ga,A) \ot C_c(\Ga)\ot \B C^n$.

The next lemma follows from the argument presented in the proof of \cref{p:lengthunb}. For this reason, we could also replace the core $C_c(\Ga,A)\ot \B C^n$ {for $D_l^t$ with the domain of $D_l^t$} in the statement of the lemma.

\begin{lemma}
Let $t \in \Ga$ and $z \in A \ralg \Ga$. The operator $z$ preserves the subspace $C_c(\Ga,A)\ot \B C^n \subseteq \ell^2(\Ga,A)\ot \B C^n$, and the commutator
\[
  [\C D_l^t , z] \colon C_c(\Ga,A)\ot \B C^n \to \ell^2(\Ga,A)\ot \B C^n
\]
extends to a bounded adjointable operator $d_l^t(z)$ on $\ell^2(\Ga,A)\ot \B C^n$.
\end{lemma}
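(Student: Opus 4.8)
The plan is to reproduce the argument from the proof of \cref{p:lengthunb} almost verbatim, with $\C D_l$ replaced throughout by its right-translate $\C D_l^t$. By linearity it suffices to treat a single generator $\pi_\al(x)\la_g$ with $x \in A$ and $g \in \Ga$, since a general $z \in A \ralg \Ga$ is a finite linear combination of such elements. That $z$ preserves the core $C_c(\Ga,A) \ot \B C^n$ is immediate from \eqref{eq:generator}: the operator $\pi_\al(x)\la_g$ sends $a \de_s \ot \xi$ to $\al_{gs}^{-1}(x) a \de_{gs} \ot \xi$, which is again compactly supported.

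For the commutator I would evaluate on a simple tensor $a \de_s \ot \xi \in C_c(\Ga,A) \ot \B C^n$ with $\xi \in \B C^n$ and compute directly
\[
[\C D_l^t , \pi_\al(x)\la_g](a \de_s \ot \xi) = \al_{gs}^{-1}(x) a \de_{gs} \ot \big( l(gst) - l(st) \big) \xi .
\]
The key observation is that the matrix coefficient appearing here is precisely the value at $gs$ of the function $\varphi_g^t \colon \Ga \to M_n(\B C)$ defined by $\varphi_g^t(u) := l(ut) - l(g^{-1}ut) = \varphi_g(ut)$, i.e.\ the right-translate of the function $\varphi_g$ from \cref{d:length}(3). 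Since right multiplication by $t$ is a bijection of $\Ga$, this translate satisfies $\sup_{u \in \Ga} \| \varphi_g^t(u) \|_\infty = \sup_{v \in \Ga} \| \varphi_g(v) \|_\infty < \infty$, so $\varphi_g^t$ is again bounded and the associated multiplication operator $M_{\varphi_g^t} \in \B L\big(\ell^2(\Ga,A) \ot \B C^n\big)$ is bounded adjointable. Comparing with the formula \eqref{eq:mult} then yields the identity
\[
[\C D_l^t , \pi_\al(x)\la_g](\ze) = M_{\varphi_g^t}\, \pi_\al(x)\la_g (\ze) \qquad \text{for all } \ze \in C_c(\Ga,A) \ot \B C^n ,
\]
which is the exact analogue of \eqref{eq:commutator-with-Dl} in the case $t = e$.

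From here the conclusion follows by the same standard approximation argument used in \cref{p:lengthunb}: since $M_{\varphi_g^t}\pi_\al(x)\la_g$ is bounded adjointable and agrees with $[\C D_l^t,\pi_\al(x)\la_g]$ on the core, the generator preserves $\T{Dom}(D_l^t)$ and $d_l^t(\pi_\al(x)\la_g) := M_{\varphi_g^t}\pi_\al(x)\la_g$ is the desired bounded adjointable extension; summing over the finitely many terms of $z$ produces $d_l^t(z)$. I do not expect a genuine obstacle here — the only points requiring care are the index bookkeeping in the commutator computation and the verification that the translated function $\varphi_g^t$ is bounded, which reduces entirely to \cref{d:length}(3). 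As an alternative one could bypass the explicit computation by invoking the unitary equivalence $\rho_t D_l \rho_t^{-1} = D_l^t$ recorded above, writing $[D_l^t, z] = \rho_t [D_l, \rho_t^{-1} z \rho_t] \rho_t^{-1}$ and checking that $\rho_t^{-1} z \rho_t$ again lies in $A \ralg \Ga$ so that \cref{p:lengthunb} applies directly; I would nonetheless favour the direct approach, as it also delivers the explicit formula for $d_l^t$ that is convenient for later use.
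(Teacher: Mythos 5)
Your main argument is correct and is exactly what the paper intends: the lemma is stated there with the one-line justification that it follows from the argument in the proof of \cref{p:lengthunb}, and your computation --- yielding $[\C D_l^t,\pi_\al(x)\la_g] = M_{\varphi_g^t}\pi_\al(x)\la_g$ with $\varphi_g^t(u) = \varphi_g(ut)$ bounded because right translation is a bijection of $\Ga$ --- matches the explicit formula the paper records immediately after the lemma. One caution about your proposed alternative route: $\rho_t^{-1} z \rho_t$ does \emph{not} in general lie in $A \ralg \Ga$, since conjugating $\pi_\al(x)\la_g$ by $\rho_t$ produces the operator $a\de_s \mapsto \al_t\big(\al_{gs}^{-1}(x)\big) a \de_{gs}$, whose coefficient is not of the form $\al_{gs}^{-1}(y)$ for a single fixed $y$ unless $t$ is central; this twisting is precisely why the paper's \cref{lemma:commutator-norm-t-independence} needs the extra operator $U_t$ in the identity $\rho_t^{-1}d_l^t(z)\rho_t = U_t d_l(z) U_t^{-1}$ (one could repair your aside by writing $\rho_t^{-1}z\rho_t = U_t z U_t^{-1}$ and noting that $U_t$ commutes with $\C D_l$, but as stated the membership check would fail). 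Since you explicitly favour the direct computation, this does not affect the validity of your proof.
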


Let $t \in \Ga$ and consider the bounded matrix-valued map $\varphi_g^t \colon \Ga \to M_n(\B C)$ defined by putting $\varphi_g^t(s) := l(st) - l(g^{-1} st)$. It is then convenient to record the explicit formula
\[
d_l^t( \pi_\al(x) \la_g) = \pi_\al(x) M_{\varphi_g^t} \la_g \quad \T{for all } x \in A \, \, , \, \, \, g \in \Ga .
\]
Notice also that $M_{\varphi_g^t} = \rho_t M_{\varphi_g} \rho_t^{-1}$ and hence that
\begin{equation}\label{eq:rhodlt}
\rho_t^{-1} d_l^t(\pi_\al(x) \la_g) \rho_t = \rho_t^{-1} \pi_\al(x) \rho_t M_{\varphi_g} \la_g .
\end{equation}
Let us finally introduce the isometric isomorphism
{\[
\begin{split}
 & U_t \colon \ell^2(\Ga,A)\ot \B C^n \to \ell^2(\Ga,A)\ot \B C^n \ \T{ given by } \\
 & U_t( a \de_s \ot \xi) := \al_t(a) \de_s \ot \xi .
\end{split}
\]}
We mention in passing that the {sequence $( U_t )_{t \in \Ga}$} yields an action of the {countable discrete} group $\Ga$ on the Hilbert $C^*$-module $\ell^2(\Ga,A)\ot \B C^n$, see e.g. \cite[\S 1.2]{Kas:EKK}, but we shall not be needing this fact here. The relationship between the isometric isomorphisms $U_t$ and $\rho_t$ can then be expressed by the identity
\begin{equation}\label{eq:yourho}
{U_t (z \ot 1 ) U_t^{-1} = \rho_t^{-1} (z \ot 1) \rho_t},
\end{equation}
which is valid for all elements $z \in A \rtimes \Ga$. 
	
\begin{lemma}\label{lemma:commutator-norm-t-independence}
Let $t \in \Ga$ and $z \in A \ralg \Ga$. It holds that
\[
\rho_t^{-1} d_l^t(z) \rho_t = U_t  d_l(z) U_t^{-1} 
\]
as operators on $\ell^2(\Ga,A)\ot \B C^n$. In particular, we have the identity $\| d_l^t(z) \|_\infty = \| d_l (z) \|_\infty$. 
\end{lemma}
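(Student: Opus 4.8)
The plan is to reduce to generators and then transport the conjugating maps $\rho_t$ and $U_t$ past the elementary building blocks of $d_l^t$ and $d_l$ by means of a few commutation relations, all of which come from the fact that $\varphi_g$ takes values in $M_n(\B C)$. Since $d_l^t$ and $d_l$ are linear and conjugation by $\rho_t$ respectively $U_t$ is linear in its argument, both sides of the claimed identity depend linearly on $z$, so it suffices to treat a single generator $z = \pi_\al(x)\la_g$ with $x \in A$ and $g \in \Ga$, these spanning $A \ralg \Ga$.

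For such a generator I would start from the explicit formula \eqref{eq:rhodlt}, which yields $\rho_t^{-1} d_l^t(\pi_\al(x)\la_g) \rho_t = \rho_t^{-1} \pi_\al(x) \rho_t \, M_{\varphi_g} \la_g$. The conjugated factor $\rho_t^{-1} \pi_\al(x) \rho_t$ can be rewritten, using \eqref{eq:yourho} applied to the element $x \in A \su A \rtimes \Ga$, as $U_t \pi_\al(x) U_t^{-1}$, so the left-hand side becomes $U_t \pi_\al(x) U_t^{-1} M_{\varphi_g} \la_g$. The goal is then to massage this into $U_t M_{\varphi_g} \pi_\al(x) \la_g U_t^{-1}$, which by \eqref{eq:commutator-with-Dl} is exactly $U_t d_l(\pi_\al(x)\la_g) U_t^{-1}$, the right-hand side.

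The key observation that drives the rearrangement is that, because $\varphi_g$ is $M_n(\B C)$-valued, $M_{\varphi_g}$ acts by position-indexed scalar matrices on the $\B C^n$-leg and leaves the $A$-coefficients untouched. From this, three commutation relations follow by a direct check on simple tensors $a \de_s \ot e_j$: first, $M_{\varphi_g}$ commutes with $U_t$, since $U_t$ only twists the coefficient $a$ by $\al_t$; second, $M_{\varphi_g}$ commutes with $\pi_\al(x)$, since $\pi_\al(x)$ acts diagonally in the position variable by left multiplication; and third, $U_t$ commutes with $\la_g$, since $U_t$ touches only coefficients while $\la_g$ merely shifts positions. Applying $U_t^{-1} M_{\varphi_g} = M_{\varphi_g} U_t^{-1}$, then $\pi_\al(x) M_{\varphi_g} = M_{\varphi_g} \pi_\al(x)$, and finally $U_t^{-1} \la_g = \la_g U_t^{-1}$, in that order, carries the expression to the desired form and establishes the operator identity.

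For the norm statement I would simply note that $\rho_t$ is a unitary on the Hilbert $C^*$-module $\ell^2(\Ga,A) \ot \B C^n$ while $U_t$ is an isometric isomorphism, so conjugation by either preserves the operator norm; thus $\| d_l^t(z) \|_\infty = \| \rho_t^{-1} d_l^t(z) \rho_t \|_\infty = \| U_t d_l(z) U_t^{-1} \|_\infty = \| d_l(z) \|_\infty$. The one subtlety to watch is that $U_t$ is \emph{not} adjointable or right $A$-linear, as it twists coefficients by $\al_t$, so its norm-invariance must be justified from its being a surjective isometry for the module norm rather than a module unitary. This is precisely why the argument proceeds by commuting $U_t$ through $M_{\varphi_g}$ and $\la_g$ rather than invoking any adjoint relation, and it is the main point requiring care in an otherwise routine computation.
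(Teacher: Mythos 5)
Your proposal is correct and follows essentially the same route as the paper: reduce to generators $\pi_\al(x)\la_g$, apply \eqref{eq:rhodlt} and \eqref{eq:yourho}, and commute $M_{\varphi_g}$, $U_t$ and $\la_g$ past one another, with the norm identity following because $\rho_t$ is unitary and $U_t$ is a surjective isometry. The only difference is that you spell out the three commutation relations and the non-adjointability caveat for $U_t$ explicitly, which the paper leaves implicit.
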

\begin{proof}
We may restrict to the case where $z = \pi_\al(x) \la_g$ for some $x \in A$ and $g \in \Ga$. It follows from \eqref{eq:rhodlt} and \eqref{eq:yourho} that 
\[
\rho_t^{-1} d_l^t(z) \rho_t = \rho_t^{-1} \pi_\al(x) \rho_t M_{\varphi_g} \la_g = U_t \pi_\al(x) U_t^{-1} M_{\varphi_g} \la_g
= U_t d_l(z) U_t^{-1} .
\]
This proves the present lemma.
\end{proof}

{In the statement of the next lemma, we could replace the core $C_c(\Ga,A)\ot \B C^n \ot C_c(\Ga)$ for $\wit{D}_l$ with the domain of $\wit{D}_l$, but this is less important for our purposes.}

\begin{lemma}\label{lemma:norm-of-commutator-in-third-entry}
For every $z \in A \ralg \Ga$, the operator {$z \otimes 1 \ot 1$} preserves the dense $A$-submodule $C_c(\Ga,A)\ot \B C^n \ot C_c(\Ga) \su (\ell^2(\Ga,A)\ot \B C^n) \hot \ell^2(\Ga)$, and the commutator
\[
\big[\tilde{\C D}_l, {z \otimes 1 \ot 1}\big] \colon C_c(\Ga,A)\ot \B C^n \ot C_c(\Ga) \to (\ell^2(\Ga,A)\ot \B C^n) \hot \ell^2(\Ga)
\]
extends to a bounded adjointable operator $\tilde{d}_l(z)$ on $(\ell^2(\Ga,A)\ot \B C^n) \hot \ell^2(\Ga)$. Furthermore, we have the identity
\[
\big\| \tilde{d}_l(z) \big\|_\infty = \| d_l(z) \|_\infty .
\]
\end{lemma}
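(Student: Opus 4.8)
The plan is to exploit that $\tilde{\C D}_l$ is block-diagonal along the third tensor leg indexed by $\Ga$. First I would pass through the canonical isometric isomorphism
\[
(\ell^2(\Ga,A)\ot \B C^n)\hot \ell^2(\Ga) \cong \ell^2\big(\Ga,\, \ell^2(\Ga,A)\ot \B C^n\big),
\]
under which $\delta_t$ in the third variable becomes the $t$-th outer coordinate. In this picture $\tilde{\C D}_l$ is the operator whose $t$-th block equals $\C D_l^t$, while $z \ot 1 \ot 1$ becomes the operator acting as $z$ (that is, $z \ot 1$ on $\ell^2(\Ga,A)\ot \B C^n$) in every block, since it leaves the third coordinate untouched. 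In particular $z \ot 1 \ot 1$ preserves $C_c(\Ga,A)\ot \B C^n \ot C_c(\Ga)$, because $z$ preserves $C_c(\Ga,A)\ot \B C^n$ (as already recorded in the construction of $d_l^t(z)$) and the support in the third variable stays finite.

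Next I would compute the commutator on a simple tensor $a\de_s \ot \xi \ot \de_t$ belonging to the algebraic core. Since both $\tilde{\C D}_l$ and $z\ot 1\ot 1$ fix the third coordinate $\de_t$, one finds
\[
[\tilde{\C D}_l,\, z\ot 1\ot 1](a\de_s \ot \xi \ot \de_t) = [\C D_l^t, z](a\de_s \ot \xi)\ot \de_t = d_l^t(z)(a\de_s \ot \xi)\ot \de_t,
\]
using that $[\C D_l^t, z]$ extends to the bounded adjointable operator $d_l^t(z)$. Thus the commutator is precisely the restriction to the core of the block-diagonal operator $\tilde{d}_l(z)$ whose $t$-th block is $d_l^t(z)$, and in particular its $e$-th block is $d_l(z)$ itself.

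It then remains to assemble this block-diagonal family into a genuine bounded adjointable operator and to pin down its norm. Here I would mimic the multiplication-operator estimate preceding \cref{p:lengthunb}, working at the level of the $A$-valued inner product: for a finitely supported family $\psi = (\psi_t)_{t\in\Ga}$,
\[
\big\langle \tilde{d}_l(z)\psi,\, \tilde{d}_l(z)\psi\big\rangle = \sum_{t\in\Ga}\big\langle \psi_t,\, d_l^t(z)^* d_l^t(z)\psi_t\big\rangle \le \Big(\sup_{t\in\Ga}\|d_l^t(z)\|_\infty^2\Big)\langle \psi,\psi\rangle,
\]
so $\tilde{d}_l(z)$ is bounded with $\|\tilde{d}_l(z)\|_\infty \le \sup_{t}\|d_l^t(z)\|_\infty$, and adjointability follows since the adjoint is the block-diagonal operator with blocks $d_l^t(z)^*$. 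By \cref{lemma:commutator-norm-t-independence} every block satisfies $\|d_l^t(z)\|_\infty = \|d_l(z)\|_\infty$, which is finite by \cref{p:lengthunb}; this bounds the supremum and gives the upper estimate $\|\tilde{d}_l(z)\|_\infty \le \|d_l(z)\|_\infty$. The reverse inequality is immediate from restricting $\tilde{d}_l(z)$ to vectors supported in the $e$-th coordinate, where it acts as $d_l(z)$. Hence $\|\tilde{d}_l(z)\|_\infty = \|d_l(z)\|_\infty$.

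The main obstacle I anticipate is the last step: justifying rigorously that a uniformly norm-bounded block-diagonal family over the $\ell^2(\Ga)$-index genuinely defines a \emph{bounded adjointable} operator on the Hilbert $C^*$-module with norm equal to the supremum of the block norms. Over Hilbert spaces this is automatic, but in the Hilbert $C^*$-module setting one must argue through the $A$-valued inner product (exactly as in the $M_\psi$ discussion) and separately verify adjointability rather than mere boundedness. The fact that \cref{lemma:commutator-norm-t-independence} makes the block norms not merely bounded but \emph{constant} in $t$ is what both guarantees uniform boundedness a priori and forces the norm to equal $\|d_l(z)\|_\infty$ exactly.
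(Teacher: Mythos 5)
Your argument is correct and follows essentially the same route as the paper: identify the commutator as the block-diagonal operator with $t$-th block $d_l^t(z)$, then invoke \cref{lemma:commutator-norm-t-independence} to make all block norms equal to $\|d_l(z)\|_\infty$. The paper compresses your final step into the phrase ``standard properties of diagonal operators''; your explicit verification via the $A$-valued inner product and the block-wise adjoint is exactly the content being invoked there.
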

\begin{proof}
{A computation of the commutator in question shows that} the bounded adjointable operator $\tilde{d}_l(z)$ agrees with the diagonal operator given by
\[
\tilde{d}_l(z)(\ze \otimes \delta_t) = d_l^t(z)(\ze) \otimes \delta_t 
\]
for all $\ze \in C_c(\Ga,A)\ot \B C^n$ and $t \in \Ga$. The result now follows {from the operator norm identity in} \cref{lemma:commutator-norm-t-independence} and {standard properties of diagonal operators.}
\end{proof}

%
%

Recall the definition of the dual coaction $\delta : A \rtimes \Ga \to (A \rtimes \Ga) \otm \Cred(\Ga)$ from \cref{ss:cross}.

\begin{proposition}\label{prop:commutator-norm-equality}
For every $z \in A \ralg \Ga$ we have the identity
\[
\big\|  (1 \otimes d_l)(\delta(z))\big\|_\infty = \big\|  d_l(z) \big\|_\infty .
\]
\end{proposition}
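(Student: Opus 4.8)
The plan is to realise the operator $(1 \otimes d_l)(\delta(z))$ as the bounded extension of a commutator with the regular operator $1 \hot D_l$, and then to transport the whole picture back to the operator $\tilde{d}_l(z)$ by conjugating with the unitary $W$, whose associated norm was already pinned down in \cref{lemma:norm-of-commutator-in-third-entry}. In this way the proposition becomes a one-line consequence of the preceding lemma once the correct identification of $(1\otimes d_l)(\delta(z))$ is in place.

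First I would make the left-hand side explicit. Writing $z = \sum_{g} \pi_\al(x_g)\la_g$ as a finite sum, the dual coaction gives $\delta(z) = \sum_g \pi_\al(x_g)\la_g \ot \la_g$, so applying $d_l$ entrywise to the second tensor leg yields
\[
(1 \otimes d_l)(\delta(z)) = \sum_g \pi_\al(x_g)\la_g \ot d_l(\la_g),
\]
an operator on $\ell^2(\Ga,A) \hot (\ell^2(\Ga) \ot \B C^n)$. Since $1 \hot D_l$ acts only on the right leg, this is precisely the bounded extension of the commutator $[1 \hot D_l, \delta(z)]$ computed on the core $C_c(\Ga,A) \ot C_c(\Ga) \ot \B C^n$ of $1 \hot D_l$.

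Next I would conjugate by $W$. Combining the identity $\delta(z) = W(z \ot 1 \ot 1)W^*$ from \eqref{eq:coactW} with the intertwining relation $W \tilde{\C D}_l W^* = 1 \ot \C D_l$ from \eqref{eq:tildenon}, and using that $W$ carries the core $C_c(\Ga,A) \ot \B C^n \ot C_c(\Ga)$ of $\tilde{\C D}_l$ bijectively onto the core $C_c(\Ga,A) \ot C_c(\Ga) \ot \B C^n$ of $1 \hot D_l$, I obtain on these cores the equality
\[
[1 \ot \C D_l, \delta(z)] = W \, [\tilde{\C D}_l, z \ot 1 \ot 1] \, W^* .
\]
Passing to bounded extensions then gives $(1 \otimes d_l)(\delta(z)) = W \tilde{d}_l(z) W^*$. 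As $W$ is unitary the operator norm is preserved, so $\|(1 \otimes d_l)(\delta(z))\|_\infty = \|\tilde{d}_l(z)\|_\infty$, and \cref{lemma:norm-of-commutator-in-third-entry} identifies the latter with $\|d_l(z)\|_\infty$, which is exactly the claimed identity.

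The main obstacle I foresee is purely the domain bookkeeping: one must check that $z \ot 1 \ot 1$ (equivalently $\delta(z)$) preserves the relevant dense core, that the displayed commutator identity holds genuinely there rather than merely formally, and that the two bounded extensions match after conjugation. All of this is controlled by the explicit formulas for $\tilde{\C D}_l$, $1 \ot \C D_l$ and $W$ together with the fact that $W$ is a unitary mapping one core bijectively onto the other; no analytic input beyond the earlier lemmas of this subsection is required.
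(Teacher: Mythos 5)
Your proposal is correct and follows essentially the same route as the paper's proof: identify $(1\otimes d_l)(\delta(z))$ with the commutator $[1\otimes\mathcal{D}_l,\delta(z)]$ on the core, conjugate by $W$ via \eqref{eq:coactW} and \eqref{eq:tildenon} to obtain $W\tilde{d}_l(z)W^*$, and conclude with \cref{lemma:norm-of-commutator-in-third-entry}. The extra explicit formula for the slice map and the domain bookkeeping you flag are exactly the (routine) checks implicit in the paper's argument.
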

\begin{proof}
  Remark that $(1 \ot d_l)(\de(z))$ does indeed make sense since $\de(z) \in (A \ralg \Ga) \ot \B C \Ga$. Letting $\ze$ belong to $C_c(\Ga,A) \ot C_c(\Ga)\ot \B C^n$ we apply \eqref{eq:coactW} and \eqref{eq:tildenon} to obtain the identities
  \[
  \begin{split}
    (1 \ot d_l)(\de(z))(\ze) & = [1 \ot \C D_l, W(z \ot 1 \ot 1) W^*](\ze) \\
    & = W [\tilde{\C D}_l,z \ot 1 \ot 1] W^*(\xi) = W\tilde{d}_l(z) W^*(\ze).
  \end{split}
  \]
The result of the proposition therefore follows from \cref{lemma:norm-of-commutator-in-third-entry}. 
\end{proof}

{We are now ready to prove the main result of this subsection, which shows that certain slice maps are slip-norm contractions.}

\begin{proof}[Proof of \cref{p:slice}]
  Let $z \in A \ralg \Ga$ be given. Recall first of all that the norm contraction $\eta \colon A \rtimes \Ga \to \B C$ extends to a norm contraction $\eta \ot 1$ on $(A \rtimes \Ga) \otm \B L(\ell^2(\Ga)\ot \B C^n)$; cf.~\cite[pg. 79]{Lance1995}. Applying  \cref{prop:commutator-norm-equality}, we therefore get that 
  \[
   L_l\big( (\eta \otimes 1) \delta(z) \big)  
  = \big\| (\eta \ot 1) (1 \ot d_l) \de(z) \big\|_\infty \leq \big\| (1 \ot d_l) \de(z) \big\|_\infty = L_l(z) . \qedhere
  \]
\end{proof}

\begin{remark}\label{r:bivariant}
In the case where $\Gamma$ is amenable, \cref{prop:Berezin-norm-Lip-approx} and \cref{p:slice} show that the Berezin transform satisfies the estimate
\begin{align}\label{eq:beta-F-approx}
\|\beta_F(z)-z\|_\infty \leq \rho_{L_l}(\chi_F, \epsilon) L_l(z) \quad \T{for all } z \in A\rtimes_{\alg}\Gamma
\end{align}
    {as soon as $F \su \Ga$ is a non-empty finite subset. Moreover, by \cref{l:imaber} we know that the image of the Berezin transform $\beta_F$ agrees with the finitely generated free $A$-module $A \ralg S$ where $S = F \cd F^{-1}$.}

    Now, choosing $F=\{e\}$, the state $\chi_F$ {agrees with the tracial state} $\tau$ on $\Cred(\Gamma)$ and $\beta_F$ agrees with the conditional expectation $E\colon A\rtimes \Gamma \to A \rtimes \Ga$ with image $A$. In this situation, the inequality \eqref{eq:beta-F-approx} therefore implies that the slip-unit ball $\{x\in A\rtimes_{\alg}\Gamma \mid L_l(x)\leq 1 \}$ projects onto a subset of diameter at most $2\cdot \rho_{L_l}({\tau}, \epsilon)$ in the quotient space $(A\rtimes \Gamma)/A$.

    If  $(\B C \Gamma, L_l)$ is furthermore assumed to be a compact quantum metric space we also have that $\lim_{n\to \infty}\rho_{L_l}(\chi_{F_n}, \epsilon)=0$ for a Følner sequence $(F_n)_n$ in $\Gamma$; see \cref{lem:ptwise-convergence}. {In this case, we also know that the distance $\rho_{L_l}(\tau, \epsilon)$ is in fact finite.}
    
    Since $\ker(L_l)=A$ {by \cref{p:kernel}} one may view the above properties {for the pair $(A \ralg \Ga, L_l)$} as an $A$-valued analogue of the characterisation of compact quantum metric spaces given in \cite[Theorem 3.1]{Kaad2023}. Under the assumption that $\Ga$ is amenable and $(\B C\Ga,L_l)$ is a compact quantum metric space, it is therefore tempting to view the unbounded Kasparov module $(A \ralg \Ga, \ell^2(\Ga,A) \ot \B C^n, D_l)$ as a  \emph{bivariant spectral metric space}. Let us highlight the properties for our unbounded Kasparov module which we view as essential in this respect (even though the codomain of $\Phi_\ep$ could perhaps be enlarged):
    \begin{enumerate}
    \item The image of the slip-unit ball $\{x\in A\rtimes_{\alg}\Gamma \mid L_l(x)\leq 1 \}$ in the quotient space $(A \rtimes \Ga)/ A$ is bounded (with respect to the quotient norm);
    \item For every $\ep > 0$ there exists a unital bounded $A$-linear map $\Phi_\ep \colon A \rtimes \Ga \to A \rtimes \Ga$ such that the image of $\Phi_\ep$ is a finitely generated projective module over $A$ and the norm-estimate
      \[
\| x - \Phi_\ep(x) \|_\infty \leq \ep \cd L_l(x)
\]
holds for all $x \in A \ralg \Ga$.
    \end{enumerate}
\end{remark}

\subsection{Proof of \cref{thm:main-theorem-crossed} and \cref{maincor:poly-growth-cor-cqms}}
With the tools above at our disposal, we may now prove  \cref{thm:main-theorem-crossed} and  \cref{maincor:poly-growth-cor-cqms} from the introduction.

{Let us recall that $\C X \su A$ is an operator system which is preserved by the action of the countable discrete group $\Ga$ on the unital $C^*$-algebra $A$. We are moreover fixing a proper matrix length function $l \colon \Ga \to M_n(\B C)$ and a slip-norm $L_{\C X} \colon \C X \to [0,\infty)$. Moreover, we recall from the beginning of \cref{ss:quametsubsys} that $E_g \colon A \rtimes \Ga \to A$ denotes the norm contraction defined by $E_g(z) := E( z \cd \la_{g^{-1}})$.}

\begin{proof}[Proof of \cref{thm:main-theorem-crossed}]
  {Suppose that $(\C X,L_{\C X})$ and $(\B C\Ga,L_l)$ are compact quantum metric spaces and that $\Ga$ is amenable. Let moreover $\nvert{\cdot}$ be an order-preserving norm on $C_c(\Gamma, \B C)$.}
  
  {To each $z$ belonging to the algebraic operator system crossed product $\C X \ralg \Ga$ we associate the compactly supported function $L_{\C X} \ci z \colon \Ga \to [0,\infty)$ defined by $(L_{\C X} \ci z)(g) := L_{\C X}( E_g(z))$. In particular, we obtain the seminorm $L_H \colon \C X \ralg \Ga \to [0,\infty)$ given by the formula $L_H(z) := \nvert{L_{\C X}\circ z}$. We also have the slip-norm $L_l$ on $\C X \ralg \Ga$ coming from our unbounded Kasparov module $\big( A \ralg \Ga, \ell^2(\Ga,A) \ot \B C^n, D_l \big)$.}
      
  The claim is now that the slip-norm
\[
{L(z)=\max\{L_l(z), L_H(z), L_H(z^*)\}}
\]
provides a compact quantum metric {space} structure on $\X\ralg \Gamma$. 

{We are going to apply \cref{thm:mainA} and verify the three relevant conditions, starting with condition $(1)$. Let $F \su \Ga$ be a finite non-empty subset.} By \cref{p:bersc}, the Berezin transform $\beta_F$ is a contraction for the slip-norm $L_l$ and we now verify that this is also the case for the remaining parts defining $L$. Let $z = \sum_{s\in \Gamma} \pi_{\alpha}(x_s) \lambda_s \in \X\rtimes_{\alg} \Gamma$ be given. {The formula in \eqref{eq:berezin} shows that
\[
\beta_F(z) = \sum_{s \in \Ga} \pi_\al(x_s) \la_s  \cd \chi_F(\la_s) .
\]
Since $\big| \chi_F(\la_g) \big| \leq 1$ it therefore follows that
\[
\big(L_{\C X} \ci \be_F(z)\big)(g) = L_{\C X}( x_g ) \cd \big| \chi_F(\la_g) \big| \leq (L_{\C X} \ci z)(g) .
\]
for all $g \in \Ga$. Since $\nvert{\cd}$ is assumed to be order-preserving we get that
\[
L_H( \be_F(z)) = \nvert{L_{\C X} \ci \be_F(z)} \leq \nvert{L_{\C X} \ci z } = L_H(z) .
\]
Since this inequality of course also holds for $z^*$ instead of $z$, we conclude that condition $(1)$ of  \cref{thm:mainA} is satisfied.}

Regarding condition $(2)$, \cref{p:slice} shows that this is already satisfied for the slip-norm $L_l$ on $\X\rtimes_{\alg} \Gamma$ and hence also for the bigger slip-norm $L$.

For condition $(3)$, fix a $g\in \Gamma$ {and a $z \in \X\rtimes_{\alg} \Gamma$. By definition of the compactly supported function $L_{\C X} \ci z$, we have that
\[
0 \leq L_{\C X}( E_g(z))\delta_g(s) \leq (L_{\C X} \ci z)(s)
\]
for all $s \in \Ga$. Since $\nvert{\cdot}$ is assumed order-preserving} we obtain the estimate
\[
L_\X(E_g(z))  =\frac{1}{\nvert{\delta_g}} \cdot \nvert{L_\X( E_g(z))\delta_g}\leq  \frac{1}{\nvert{\delta_g}}\cdot\nvert{L_{\C X}\circ z}
\leq \frac{1}{\nvert{\delta_g}} \cdot L(z),
\]
showing that $E_g$ is indeed slip-norm bounded. Hence all conditions in  \cref{thm:mainA}  are met, and we conclude that $(\X\rtimes_{\alg} \Gamma, L)$ is a compact quantum metric space.
\end{proof}

\begin{proof}[Proof of \cref{maincor:poly-growth-cor-cqms}]
{This follows from \cref{thm:main-theorem-crossed}} since groups of polynomial growth are amenable \cite[Example 2.6.6]{Brown-Ozawa} and their word length functions are known to yield compact quantum metric spaces \cite[Theorem 1.4]{ChristRieffel2017}. 
\end{proof}

\section{Spectral metrics on crossed products}\label{ss:specross}
We now turn our attention to  the important special case where the Lip-norm on the base algebra of the crossed product stems from a spectral triple, and our main goal is to prove 
\cref{introthm:spectral-metric-space} from the introduction. Throughout this section, we therefore let $(\C A,H,D)$  be a unital spectral triple of parity $q \in \{0,1\}$. With our conventions, the unital $*$-algebra $\C A$ is identified with a unital $*$-subalgebra of $\B L(H)$ and the corresponding norm closure is denoted by $A$, so that $A$ is a unital $C^*$-algebra. In the {graded} case (meaning $q = 0$) we denote the $\zz/2\zz$-grading operator by $\ga \colon H \to H$.
On top of this data, we consider a countable discrete group $\Ga$  which acts on $A$ via $*$-automorphisms and, as before, the action of a group element $g$ is denoted by $\al_g \colon A \to A$. We shall furthermore assume that $\Ga$ is equipped with a proper matrix length function $l \colon \Ga \to M_n(\B C)$ for some fixed matrix size $n \in \B N$; see \cref{d:length}. The parity $p \in \{0,1\}$ of the length function is defined to be zero in the graded case and one otherwise. {For $p = 0$ we denote the corresponding selfadjoint unitary by $\ga_l \in M_n(\B C)$.}

\subsection{Spectral triples on crossed products}\label{ss:triples-on-crossed}
Our unital spectral triple $(\C A,H,D)$ gives rise to a derivation $d \colon \C A \to \B L(H)$ given by the formula $d(a) := \T{cl}([D,a])$, meaning that $d(a)$ agrees with the closure of the commutator $[D,a] \colon \T{Dom}(D) \to H$. In order to promote the selfadjoint unbounded operator $D$ to (part of a) Dirac operator at the crossed product level, one needs to require more from the the action of $\Ga$. More precisely, it turns out that the following additional assumptions are needed, where the terminology is chosen to align with that from \cite{HSWZ:STC} and \cite{Klisse2023}.

\begin{definition}
The action of $\Ga$ is said to be \emph{smooth} if  $\al_g(\C A) = \C A$ for all $g \in \Ga$  and \emph{metrically equicontinuous} if, moreover, 
  \[
\big\{\big\| d( \al_g(a)) \big\|_\infty \mid g \in \Ga \big\} \su [0,\infty)
  \]
  is bounded for all $a\in \A$.
\end{definition}

That smoothness and metric equicontinuity are necessary additional assumptions was already realised in \cite{BMR:DSS} in the case where $\Gamma=\zz$, and later considered in the context of general countable discrete groups in \cite{HSWZ:STC} and in \cite{Pat:CST} for locally compact groups. See also \cite{HSWZ:STC} for a discussion of the role of metric equicontinuity for actions on classical spaces. The focus in \cite{HSWZ:STC} and  \cite{BMR:DSS} is on the situation where both the length function and the spectral triple are odd, but since we also aim to show that our constructions are compatible with the Kasparov product in $KK$-theory, we will need to pay additional attention to the three remaining combinations of parities.
The $KK$-theoretic point of view has two obvious advantages: Firstly, it establishes that the relevant unital spectral triple on the reduced crossed product $A \rtimes \Ga$ is constructed by a method which applies in many other situations. And secondly, the application of the unbounded Kasparov product in this context  clarifies the link with the internal Kasparov product in $KK$-theory, which helps with understanding the $K$-homology class of the unital spectral triple over the reduced crossed product. We return to these matters in \cref{sec:unbounded-KK}, and now turn to constructing the Dirac operator in each of the four combinations of parities.

\subsubsection{Odd times odd}


We first treat the case where both $p$ and $q$ are equal to $1$. 
The Hilbert space $G$ for the even unital spectral triple for $A \rtimes \Ga$ comes from the internal tensor product of Hilbert $C^*$-modules, so that
\[
G := \big( (\ell^2(\Ga,A)\ot \B C^n) \hot_A H \big)^{\op 2} .
\]
This Hilbert space is $\zz/2\zz$-graded with grading operator $\ga := \begin{pmatrix}1 & 0 \\ 0 & -1\end{pmatrix}$ so that the two factors in the above direct sum decomposition are, respectively, homogeneous even and odd. The reduced crossed product $A \rtimes \Ga$, which by definition is a unital $C^*$-subalgebra of $\B L\big( \ell^2(\Ga,A) \big)$, acts on the Hilbert space $G$ via the unital $*$-homomorphism
\[
z \mapsto \ma{cc}{{z \ot 1 \ot 1} & 0 \\ 0 & {z \ot 1 \ot 1}} .
\]
Since $A$ is a unital $C^*$-subalgebra of $\B L(H)$, we may identify the internal tensor product $(\ell^2(\Ga,A)\ot \B C^n) \hot_A H$ with the Hilbert space $\ell^2(\Ga,\B C^n \ot H)$, and we will often suppress this identification from the notation.
The abstract Dirac operator is defined as a \emph{tensor sum} (applying terminology suggested by Connes) of the two components $D_l$ and $D$. More precisely, for each $\la \in \B C$ we consider the unbounded operator
\[
\C D_l \ot 1 + \la \ot_\Na D \colon C_c\big(\Ga, \B C^n \ot \T{Dom}(D)\big) \to \ell^2\big(\Ga,\B C^n \ot H\big)
\]
which is given by the formula
{\begin{align}\label{eq:definition-of-Dirac-1}
(\C D_l \ot 1 + \la \ot_\Na D)\big( (\xi \ot \eta) \de_s  \big) := (l(s) \cd \xi \ot \eta) \de_s  + \la \cd (\xi \ot D\eta) \de_s 
\end{align}
for all $s \in \Ga$, $\xi \in \B C^n$ and $\eta \in \T{Dom}(D)$.}

The unbounded product operator $D_l \ti_\Na D$, also known as the tensor sum, is then defined as the closure of the unbounded symmetric operator
\[
\begin{pmatrix} 0 &  \C D_l \ot 1 + i \ot_\Na D \\ \C D_l \ot 1 - i \ot_\Na D & 0 \end{pmatrix}
\colon \big( C_c(\Ga, \T{Dom}(D)^{\op n})\big)^{\op 2} \to G .
\]

\begin{remark}\label{r:nabla}
{The subscript $\Na$ in the definition above refers to a Hermitian connection which is in fact used to lift the unbounded selfadjoint operator $D$ to the unbounded selfadjoint operator $1\otimes_{\Na} D$. More precisely, working directly with the internal tensor product $(\ell^2(\Ga,A) \ot \B C^n) \hot_A H$ one may define $1 \ot_{\Na} D$ as the sum
  \begin{equation}\label{eq:horilift}
  1 \ot_\Na D := c(\Na) + 1 \ot D \colon (C_c(\Ga,\C A) \ot \B C^n) \ot_{\C A} \T{Dom}(D)
  \to (\ell^2(\Ga,A) \ot \B C^n) \hot_A H ,
  \end{equation}
  where $c(\Na)$ is the operator which maps a simple tensor $a \de_s \ot \xi \ot \eta$ to the vector
  \[
c(\Na)(a \de_s \ot \xi \ot \eta) := \de_s \ot \xi \ot d(a) \eta .
\]
We shall give more details on this point of view in \cref{sec:unbounded-KK}. Notice in this respect that $1 \ot D$ is typically not well-defined on the module tensor product $(C_c(\Ga,\C A) \ot \B C^n) \ot_{\C A} \T{Dom}(D)$ since $D$ does not commute with all the elements in the coordinate algebra $\C A$. This defect is corrected by the extra term $c(\Na)$ involving the Hermitian connection $\Na$.} Notice that in the formula from \eqref{eq:definition-of-Dirac-1}, the extra term $c(\Na)$ does not appear since we are working with a tensor product over $\B C$ instead of the module tensor product over $\C A$. Let us in this connection clarify that the two Hilbert spaces $(\ell^2(\Ga,A) \ot \B C^n) \hot_A H$ and $\ell^2(\Ga,\B C^n \ot H)$ are unitarily isomorphic (mapping $(a \de_s \ot \xi) \ot \eta$ to $(\xi \ot a\eta) \de_s$) and using this identification, the two expressions for $1 \ot_\Na D$ from \eqref{eq:definition-of-Dirac-1} and \eqref{eq:horilift} coincide.
\end{remark}

\subsubsection{Even times even}

In the case where $p$ and $q$ are both even, the Hilbert space is the internal tensor product $G := (\ell^2(\Ga,A) \ot \B C^n) \hot_A H$ which is equipped with the action of the reduced crossed product given by $z \mapsto z \ot 1 \ot 1$. As in the odd times odd case, we identify $G$ with the Hilbert space $\ell^2(\Ga,\B C^n \ot H)$, but this time $\ell^2(\Ga,\B C^n \ot H)$ is $\zz/2\zz$-graded with grading operator given by the expression
\[
\ga_l \ot \ga \colon (\xi \ot \eta) \de_s \mapsto (\ga_l \cd \ga)(\xi \ot \eta) \de_s .
\]
The relevant unbounded product operator $D_l \ti_\Na D$ is then defined as the closure of the unbounded symmetric operator
\[
\C D_l \ot 1 + \ga_l \ot_\Na D \colon C_c\big(\Ga,\B C^n \ot \T{Dom}(D) \big) \to \ell^2(\Ga,\B C^n \ot H)
\]
given by the concrete formula
\[
(\C D_l \ot 1 + \ga_l \ot_\Na D)((\xi \ot \eta)\de_s) = (l(s) \cd \xi \ot \eta)\de_s + (\ga_l \cd \xi \ot  D(\eta) ) \de_s .
\]

\subsubsection{Odd times even and even times odd}
In the two cases where $p + q$ is odd, the Hilbert space is again $G := (\ell^2(\Ga,A) \ot \B C^n) \hot_A H$ equipped with the same action of the reduced crossed product as in the even times even case. As above, we identify $(\ell^2(\Ga,A) \ot \B C^n) \hot_A H$ with $\ell^2(\Ga, \B C^n \ot H)$ but we view this Hilbert space as being ungraded. For $p$ even and $q$ odd, the unbounded product operator is given by the same formula as in the case where $p$ and $q$ are both even. In the final case where $p$ is odd and $q$ is even, we define the unbounded product operator $D_l \ti_\Na D$ as the closure of the unbounded symmetric operator
\[
\C D_l \ot \ga + 1 \ot_\Na D \colon C_c\big(\Ga,\B C^n \ot \T{Dom}(D)\big) \to \ell^2(\Ga,\B C^n \ot H),
\]
which is given by the explicit formula
\[
(\C D_l \ot \ga + 1 \ot_\Na D)((\xi \ot \eta)\de_s) = (l(s) \cd \xi \ot \ga(\eta) ) \de_s + (\xi \ot D(\eta)) \de_s .
\]

The expected result is now as follows:

\begin{theorem}\label{thm:crossed-product-spectral-triple}
Let $(\A,H, D)$ be a {unital} spectral triple of parity $q$ and $\Gamma$ be a {countable} discrete group endowed with a matrix-valued length function $l$ of parity $p$. {Suppose that} $\Ga$ acts smoothly and metrically equicontinuously on the $C^*$-closure $A$ of $\A$, then $(\A\rtimes_{\alg}\Gamma, G, D_l\times_{\Na} D)$ is a unital spectral triple of parity $p+q$ (mod 2).
\end{theorem}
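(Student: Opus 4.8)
The plan is to verify the three defining properties of a unital spectral triple for $(\A \ralg \Ga, G, D_l \ti_\Na D)$: essential self-adjointness of $D_l \ti_\Na D$ on its natural core, boundedness of the commutators $[D_l \ti_\Na D, z]$ for $z$ in the algebraic crossed product, and compactness of the resolvent. Throughout I would work in the unitarily isomorphic picture $G \cong \ell^2(\Ga, \B C^n \ot H)$ (or its square in the odd-odd case) described in \cref{r:nabla}; under this identification the connection term $c(\Na)$ is absorbed, so that $1 \ot_\Na D$ acts simply as $D$ on the $H$-factor, while $\C D_l \ot 1$ acts by multiplication with $l(s)$ on the $\B C^n$-factor of the $s$-th summand. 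A short computation then shows that a generator $z = \pi_\al(a) \la_g$ acts by $(\xi \ot \eta) \de_s \mapsto (\xi \ot \al_{gs}^{-1}(a)\eta)\de_{gs}$, and smoothness guarantees that $z$ preserves the algebraic core $C_c(\Ga, \B C^n \ot \T{Dom}(D))$. Since the grading operators $\ga_l$ and $\ga$ (in the relevant parities) are bounded and commute with the representation of $A \rtimes \Ga$, all four parity cases reduce to the same two commutator computations.

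For self-adjointness I would exploit that, after the identification, $D_l \ti_\Na D$ is a direct sum over $s \in \Ga$ of operators acting on $\B C^n \ot H$ (or its double). In each parity the $s$-th summand is a bounded self-adjoint perturbation by $l(s)$ of an operator built from $D$ on the $H$-factor; for example, in the even-even case it is $l(s)\ot 1 + \ga_l \ot D$, which is self-adjoint on $\B C^n \ot \T{Dom}(D)$. Using that $l$ is graded, so that $l(s)$ anticommutes with $\ga_l$, one computes that the square of this summand equals $l(s)^2 \ot 1 + 1 \ot D^2$ with the cross terms cancelling, and the analogous identity holds in every parity. Since finitely supported sequences form a core for a direct sum of self-adjoint operators, it follows that $D_l \ti_\Na D$ is essentially self-adjoint on $C_c(\Ga, \B C^n \ot \T{Dom}(D))$ and that $(D_l \ti_\Na D)^2 = \bigoplus_{s \in \Ga}(l(s)^2 \ot 1 + 1 \ot D^2)$.

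Compactness of $(D_l \ti_\Na D \pm i)^{-1}$ then follows from this square formula. Given a finite subset $F \su \Ga$, let $P_F$ be the orthogonal projection onto $\ell^2(F, \B C^n \ot H)$; properness of $l$ forces $l(s)^2 \geq c_F \cd 1$ for $s \notin F$, with $c_F \to \infty$ as $F$ exhausts $\Ga$, whence $\big\| (1 - P_F)(1 + (D_l \ti_\Na D)^2)^{-1}\big\|_\infty \leq (1 + c_F)^{-1} \to 0$. On the other hand $P_F(1 + (D_l \ti_\Na D)^2)^{-1}$ is a finite sum of operators $(1 + l(s)^2 \ot 1 + 1 \ot D^2)^{-1}$ on $\B C^n \ot H$, each of which is compact because $(1 + D^2)^{-1}$ is compact on $H$ (diagonalising the positive matrix $l(s)^2$ reduces this to compactness of resolvents of $D^2$). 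Hence $(1 + (D_l \ti_\Na D)^2)^{-1}$ is a norm limit of compact operators and is itself compact.

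The decisive step, and the place where metric equicontinuity is indispensable, is the boundedness of the commutators. It suffices to treat a generator $z = \pi_\al(a)\la_g$ and to split the commutator into its $\C D_l \ot 1$ and $1 \ot_\Na D$ parts, the bounded grading factors being harmless. The first part is bounded by the unbounded Kasparov module estimate of \cref{p:lengthunb}: one finds that $[\C D_l \ot 1, z]$ acts by $(\xi\ot\eta)\de_s \mapsto (\varphi_g(gs)\xi \ot \al_{gs}^{-1}(a)\eta)\de_{gs}$, which is bounded since $\varphi_g$ is bounded. For the second part a direct computation gives $[1 \ot_\Na D, z](\xi \ot \eta)\de_s = (\xi \ot d(\al_{gs}^{-1}(a))\eta)\de_{gs}$, so that its operator norm equals $\sup_{s \in \Ga}\| d(\al_{gs}^{-1}(a))\|_\infty = \sup_{h \in \Ga}\| d(\al_h(a))\|_\infty$, which is finite precisely because the action is metrically equicontinuous. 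A standard approximation argument, as in \cref{p:lengthunb}, then upgrades these core computations to the statement that $z$ preserves $\T{Dom}(D_l \ti_\Na D)$ with bounded closed commutator, completing the verification. I expect the self-adjointness bookkeeping across the four parities to be the most tedious point, while the metric-equicontinuity commutator bound is the genuine conceptual heart of the argument.
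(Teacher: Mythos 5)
Your proof is correct, but it takes a genuinely different route from the paper's. The paper does not verify the spectral triple axioms directly at all: it remarks that the odd--odd case is outlined in \cite{HSWZ:STC} and otherwise obtains \cref{thm:crossed-product-spectral-triple} as a by-product of the unbounded $KK$-theoretic analysis in \cref{sec:unbounded-KK}, where a correspondence $(E_\Na,\Na)$ in the sense of Kaad--Lesch is built and \cite[Theorems 6.7 and 7.5]{KaLe:SFU} are invoked to deliver selfadjointness and compact resolvent together with the Kasparov product identification of \cref{t:unbprod} --- and even that is only carried out in detail for $p=q=1$, $n=1$ and $\Ga$ infinite. Your argument is an elementary, self-contained verification covering all four parities and general $n$ uniformly. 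Its organising principle --- that under the unitary $G\cong\ell^2(\Ga,\B C^n\ot H)$ (or its double) the tensor sum decomposes as a direct sum of fibre operators, each a bounded selfadjoint perturbation of an amplification of $D$ with square $l(s)^2\ot 1+1\ot D^2$ --- is exactly the observation the paper uses at the very end of its proof of \cref{t:unbprod} to check surjectivity of $D_l\ti_\Na D\pm i$ via the operators $T_s$; you push it further to extract essential selfadjointness on the algebraic core and, combined with properness of $l$ and compactness of $(1+D^2)^{-1}$, the compact resolvent, whereas the paper outsources both to the correspondence machinery. Your two commutator estimates (the $\varphi_g$-bound from condition $(3)$ of \cref{d:length}, and the $\sup_{h\in\Ga}\|d(\al_h(a))\|_\infty$ bound from metric equicontinuity) coincide with the paper's computations of the vertical and horizontal derivations $d_V$ and $d_H$ in \cref{subsec:slip-norms}, so that part is the same in substance. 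What your route buys is independence from the operator $*$-module formalism and an honest treatment of all parity combinations; what the paper's route buys is \cref{t:unbprod} for free. The only points worth writing out carefully in a final version are the standard facts that the algebraic direct sum of fibre domains is a core for a direct sum of selfadjoint operators, and that a bounded commutator on a core upgrades to domain preservation on all of $\T{Dom}(D_l\ti_\Na D)$; neither is a gap.
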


This theorem is well known and an outline of the proof in the ``odd times odd'' case can be found in \cite{HSWZ:STC} for integer valued length functions, but the same kind of reasoning may be applied for other parities and for matrix-valued length functions. We shall not dwell on these details, since the more subtle analysis of the $KK$-theoretic properties of {our data} in \cref{sec:unbounded-KK} will, as a side effect, also yield a proof of  \cref{thm:crossed-product-spectral-triple}.


\subsection{Construction of slip-norms}\label{subsec:slip-norms}
Let us stay in the setting {described in the statement of \cref{thm:crossed-product-spectral-triple}}. We are going to suppress the faithful representation of the reduced crossed product $A \rtimes \Ga$ on $\ell^2(\Ga,\B C^n \ot H)$ (but recall that this representation has been explained in \cref{ss:triples-on-crossed}). Regardless of the parities of the defining data we may consider the two essentially selfadjoint unbounded operators
\[
\begin{split}
& \C D_l \ot 1 \T{ and } 1 \ot_\Na D \colon C_c(\Ga,\B C^n \ot \T{Dom}(D)) \to \ell^2(\Ga,\B C^n \ot H) \T{ given by } \\
& (\C D_l \ot 1)((\xi \ot \eta) \de_s) := (l(s) \cd \xi \ot \eta) \de_s \quad \T{ and } \\
& (1 \ot_\Na D)((\xi \ot \eta) \de_s) := (\xi \ot D(\eta) ) \de_s .
\end{split}
\]
We denote the selfadjoint closures by $D_l \hot 1$ and $1 \hot_\Na D$, respectively, and these unbounded selfadjoint operators yield two derivations $d_V$ and $d_H$ on the algebraic crossed product $\C A \ralg \Ga$ as follows:\\

The \emph{vertical derivation} $d_V \colon \C A \ralg \Ga \to \B L\big(  \ell^2(\Ga,\B C^n \ot H)\big)$ is obtained by taking commutators with $D_l \hot 1$. This derivation is given explicitly by the formula
\begin{equation}\label{eq:verderi}
d_V( \pi_\al(x) \la_g) = M_{\varphi_g} \pi_\al(x) \la_g ,
\end{equation}
where $M_{\varphi_g}$ is the multiplication operator associated with the bounded map $\varphi_g \colon \Ga \to M_n(\B C)$ defined by $\varphi_g(s) := l(s) - l(g^{-1}s)$. Identifying the Hilbert space $\ell^2(\Gamma, \B C^n \ot H)$ with the internal tensor product $(\ell^2(\Gamma, A) \ot \B C^n)\hot_A H$, we see that $d_V$ agrees with $d_l\otimes 1$, where the derivation $d_l \colon \C A \ralg \Ga \to \B L( \ell^2(\Ga,A) \ot \B C^n)$ comes from the unbounded Kasparov module $\big( \C A \ralg \Ga, \ell^2(\Ga,A) \ot \B C^n, D_l \big)$ as described in \cref{ss:length}. In particular, we get that the slip-norm $L_l$ analysed in \cref{sec:slip-from-length} is given by the formula
\[
L_l(z) = \| d_V(z) \|_\infty \quad \T{for all } z \in \C A \ralg \Ga .
\]

The \emph{horizontal derivation} $d_H \colon \C A \ralg \Ga \to \B L\big( \ell^2(\Ga,\B C^n \ot H) \big)$ is obtained by taking commutators with $1 \hot_\Na D$. Recall that the action of $\Ga$ on $A$ is assumed to be {smooth and metrically equicontinuous}. 
Whenever $x \in \C A$ we therefore have a well-defined bounded operator $1 \ot d( \pi_\al(x))$ on the Hilbert space $\ell^2(\Ga,\B C^n \ot H)$ defined by the expression
\[
\big( 1 \ot d( \pi_\al(x)) \big)( (\xi \ot \eta) \de_s) := ( \xi \ot d( \al_s^{-1}(x))(\eta) ) \de_s .
\]
Indeed, the operator norm of this diagonal operator agrees with the supremum of the subset
\[
\big\{\| d( \al_s^{-1}(x)) \|_\infty \mid s \in \Ga \big\} \su [0,\infty),
\]
which is bounded because of metric equicontinuity. For $x \in \C A$ and $g \in \Ga$ it can be verified that $\pi_\al(x) \la_g$ preserves the core $C_c(\Ga,\B C^n \ot \T{Dom}(D))$ for $1 \hot_{\Na} D$. Let now $\xi \in \B C^n$, $\eta \in \T{Dom}(D)$ and $s \in \Ga$ be given and compute that
\[
\begin{split}
  [ 1 \hot_{\Na} D, \pi_\al(x) \la_g]\big((\xi \ot \eta)\de_s\big) &= \big( \xi \ot [ D, \al_{gs}^{-1}(x)] \eta \big) \de_{gs} \\
& = \big( 1 \ot d( \pi_\al(x)) \big) \la_g ( (\xi \ot \eta) \de_s) .
\end{split}
\]
This establishes the following explicit formula for the horizontal derivation
\begin{equation}\label{eq:d_H-formula}
d_H(  \pi_\al(x) \la_g)=\big( 1 \ot d( \pi_\al(x)) \big) \la_g.
\end{equation}
Notice that the horizontal derivation on the algebraic crossed product $\C A \ralg \Ga$ gives rise to a slip-norm defined by $z \mapsto \| d_H(z) \|_\infty$.

Finally, we let $L_{D_l \ti_\Na D}$  denote the slip-norm on $\C A \ralg \Ga$ which arises from the unital spectral triple $\big( \C A \ralg \Ga, G, D_l \ti_\Na D\big)$ by taking norms of commutators. The following convenient result, which can be achieved by going through the argument presented as part of \cite[Theorem 7.1]{Kaad2023}, shows how the three slip-norms are related.

\begin{lemma}\label{l:equislip}
  In all the four combinations of parities, we have the inequalities
  \[
\max\big\{\| d_V(z) \|_\infty, \| d_H(z) \|_\infty \big\} \leq L_{D_l \ti_\Na D}(z) \leq 2 \cd \max\big\{\| d_V(z) \|_\infty, \| d_H(z) \|_\infty \big\}
\]
for all $z \in \C A \ralg \Ga$.
\end{lemma}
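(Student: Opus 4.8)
The plan is to exploit the fact that, regardless of parities, the Dirac operator $D_l \ti_\Na D$ is assembled from the two essentially selfadjoint operators $D_l \hot 1$ and $1 \hot_\Na D$ whose associated commutators are exactly $d_V$ and $d_H$. It suffices to treat a generator $z = \pi_\al(x) \la_g$ with $x \in \C A$ and $g \in \Ga$, the general case following by linearity. Consulting the four explicit formulas for $D_l \ti_\Na D$ together with \eqref{eq:verderi} and \eqref{eq:d_H-formula}, I would verify that the closed commutator $[D_l \ti_\Na D, z]$ equals a sum $V(z) + W(z)$ of a \emph{vertical contribution} $V(z)$ and a \emph{horizontal contribution} $W(z)$. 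In the odd times odd case these are the off-diagonal operators $V(z) = \pma{0 & d_V(z) \\ d_V(z) & 0}$ and $W(z) = \pma{0 & i \cd d_H(z) \\ -i \cd d_H(z) & 0}$, while in the three remaining cases $V(z)$ and $W(z)$ arise from $d_V(z)$ and $d_H(z)$, up to left multiplication by one of the selfadjoint unitaries $\ga_l \ot 1$ or $1 \ot \ga$. In every case the multiplying operator is unitary, so $\|V(z)\|_\infty = \|d_V(z)\|_\infty$ and $\|W(z)\|_\infty = \|d_H(z)\|_\infty$, and the upper bound $L_{D_l \ti_\Na D}(z) = \|V(z) + W(z)\|_\infty \leq \|d_V(z)\|_\infty + \|d_H(z)\|_\infty \leq 2 \cd \max\{\|d_V(z)\|_\infty, \|d_H(z)\|_\infty\}$ follows at once from the triangle inequality.

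For the lower bound I would extract $V(z)$ and $W(z)$ individually by an averaging argument. In each parity combination I would exhibit a selfadjoint unitary $u$ on $G$ which commutes with the represented reduced crossed product and which conjugates one of the two contributions to itself and the other to its negative. The half-sum and half-difference $\frac12\big( [D_l \ti_\Na D, z] \pm u [D_l \ti_\Na D, z] u \big)$ then recover $V(z)$ and $W(z)$, whence $\max\{\|d_V(z)\|_\infty, \|d_H(z)\|_\infty\} \leq \|[D_l \ti_\Na D, z]\|_\infty = L_{D_l \ti_\Na D}(z)$. The relevant unitary is the flip $\pma{0 & 1 \\ 1 & 0}$ exchanging the two copies of $\ell^2(\Ga, \B C^n \ot H)$ in the odd times odd case (where neither grading is available); when $p$ is even I would instead use $\ga_l \ot 1$, acting only on the $\B C^n$-leg; and when $q$ is even I would use $1 \ot \ga$ coming from the grading of $(\C A, H, D)$.

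The main work, and the step I expect to be most delicate, is the case-by-case verification of these (anti)commutation relations after the identification $(\ell^2(\Ga, A) \ot \B C^n) \hot_A H \cong \ell^2(\Ga, \B C^n \ot H)$ from \cref{r:nabla}. Under this identification the element $z$ acts nontrivially on the $H$-factor, so I must check with care that the chosen $u$ genuinely commutes with $z$: this uses that $\ga_l \ot 1$ touches only the $\B C^n$-leg and that $\C A$ consists of even operators, so that $1 \ot \ga$ commutes with $z$. I must likewise confirm that $u$ flips the sign of exactly one contribution. For $\ga_l \ot 1$ this rests on the grading condition $l(s) \ga_l = - \ga_l l(s)$ of \cref{d:length}, which forces the multiplication operator $M_{\varphi_g}$ to anticommute with $\ga_l \ot 1$ and hence negates the vertical contribution; for $1 \ot \ga$ it rests on the fact that $d(a)$ is odd, i.e. anticommutes with $\ga$, which negates the horizontal contribution. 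Once this bookkeeping is in place, the averaging identities hold on the common core $C_c(\Ga, \B C^n \ot \T{Dom}(D))$ and extend to the closures, so both inequalities follow; this is in essence the computation underlying \cite[Theorem 7.1]{Kaad2023}.
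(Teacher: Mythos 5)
Your proposal is correct and follows essentially the same route as the paper: decompose the commutator with $D_l \ti_\Na D$ into a vertical and a horizontal contribution (each a unitary multiple of $d_V(z)$ or $d_H(z)$), get the upper bound from the triangle inequality, and recover each contribution for the lower bound by averaging against conjugation with a suitable selfadjoint unitary that commutes with one piece and anticommutes with the other. The only cosmetic difference is that in the odd--odd case the paper conjugates separately by $\sigma_1$ and $\sigma_2$, whereas you take the half-sum and half-difference for a single flip, which amounts to the same computation.
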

\begin{proof}
Denote by $d$ the derivation arising from $D_l \ti_\Na D$; i.e.~$d(z)=\T{cl}([D_l \ti_\Na D, z])$ for $z\in \C \A \rtimes_{\alg}\Gamma$.
The proof follows the same overall strategy in all four cases, {so we just give the details in the cases where $p$ and $q$ are both even or $p$ and $q$ are both odd.}

If $p$ and $q$ are both even, we may promote $\gamma_l$ to a selfadjoint unitary operator on $\ell^2(\Gamma, \B C^n \ot H)$ by setting $\gamma_l((\xi \ot \eta)\delta_s)=(\gamma_l\cdot \xi \ot \eta)\delta_s$  and a direct computation shows that 
$d(z)=d_V(x) + \gamma_l d_H(z)$ from which the upper bound in the lemma follows. Using that $\gamma_l$ commutes with $d_H(z)$ and anti-commutes with $d_V(z)$, {we obtain the lower bound by considering} the two bounded operators $d(z) \pm \gamma_l d(z)\gamma_l$. 
%

If both $p$ and $q$ are odd, one has the formula
\[
d(z)=d_V(z)\sigma_1 +d_H(z)\sigma_2 \quad \text{ with } \sigma_1=\begin{pmatrix} 0 & 1 \\ 1 & 0 \end{pmatrix} \ \text{ and } \ \sigma_2=\begin{pmatrix} 0 & i \\ -i &0 \end{pmatrix},
\]
from which the upper bound follows. Using that $\sigma_1\sigma_2=-\sigma_2\sigma_1$, {we obtain the relevant lower bound by considering} the bounded operators $d(z) +\sigma_1d(z)\sigma_1$ and $d(z) +\sigma_2d(z)\sigma_2$. 
\end{proof}

Our next aim is to utilise the equivalence of seminorms from \cref{l:equislip} together with \cref{thm:main-theorem-crossed} to obtain conditions ensuring that the unital spectral triple $(\A\rtimes_{\alg}\Gamma, G, D_l\times_{\Na} D)$ is a spectral metric space.

\subsection{Proof of \cref{introthm:spectral-metric-space}}\label{sec:spectral-metrics-on-crossed}
{Throughout this section we consider a fixed unital spectral triple $(\C A,H,D)$ and a fixed countable discrete group $\Ga$ which acts by $*$-automorphisms on the unital $C^*$-algebra $A$ obtained by closing $\C A \su \B L(H)$ in the operator norm. We are moreover assuming that $\Ga$ is equipped with a proper matrix length function $l \colon \Ga \to M_n(\B C)$.}


{Let us consider the supremum norm $\nvert{\cd}_\infty$ on $C_c(\Ga,\B C)$ defined by the formula
\[
\nvert{f}_\infty := \sup_{s \in \Ga} | f(s) |
\]
and record that the supremum norm is order-preserving. Combining the supremum norm with the slip-norm $L_D \colon \C A \to [0,\infty)$ coming from the unital spectral triple $(\C A,H,D)$ we obtain the seminorm
\[
L_H^\infty \colon \C A \ralg \Ga \to [0,\infty) \quad L_H^\infty(z) := \nvert{L_D \ci z }_\infty . 
  \]
  Recall in this respect that $L_D \ci z \colon \Ga \to [0,\infty)$ refers to the compactly supported function given by $(L_D \ci z)(g) = L_D( E_g(z))$ for all $g \in \Ga$.

    We moreover apply the notation $L_\infty \colon \C A \ralg \Ga \to [0,\infty)$ for the slip-norm defined by
      \[
L_\infty(z) := \max\{L_l(z) , L_H^\infty(z), L_H^\infty(z^*) \} .
\]

In the case where the action of $\Ga$ is smooth and metrically equicontinuous (with respect to $(\C A,H,D)$) we recall from \cref{thm:crossed-product-spectral-triple} that $\big( \C A \ralg \Ga, G, D_l \ti_\Na D \big)$ is a unital spectral triple. The corresponding slip-norm is denoted by $L_{D_l \ti_\Na D}$.
    
  \begin{lemma}\label{l:specdomi}
Suppose that the action of $\Ga$ on $A$ is smooth and metrically equicontinuous. We have the inequalities
\[
L_H^\infty(z) \leq \| d_H(z) \|_\infty \ \mbox{ and }  \ L_\infty(z) \leq L_{D_l \ti_\Na D}(z) \quad \T{for all } z \in \C A \ralg \Ga .
  \]
  \end{lemma}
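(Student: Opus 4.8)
The plan is to establish the first inequality by a direct compression argument on the Hilbert space $\ell^2(\Ga,\B C^n \ot H)$, and then to deduce the second inequality from it together with \cref{l:equislip}. Writing a general element of the algebraic crossed product as a finite sum $z = \sum_{g \in \Ga} \pi_\al(x_g) \la_g$ with $x_g \in \C A$, I first record that the compactly supported function $L_D \ci z$ satisfies $(L_D \ci z)(g) = L_D(E_g(z)) = \| d(x_g) \|_\infty$, using that $E_g(z) = x_g$. Consequently $L_H^\infty(z) = \sup_{g \in \Ga} \| d(x_g) \|_\infty$, and the task reduces to bounding each $\| d(x_s) \|_\infty$ by $\| d_H(z) \|_\infty$.

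For the first inequality I use the explicit formula \eqref{eq:d_H-formula}, which gives $d_H(z) = \sum_{g \in \Ga} \big( 1 \ot d(\pi_\al(x_g)) \big) \la_g$ as a bounded operator on $\ell^2(\Ga,\B C^n \ot H)$. A short computation of matrix coefficients, using that $\la_g$ acts by the translation $\zeta \de_t \mapsto \zeta \de_{gt}$ and that $1 \ot d(\pi_\al(x_g))$ is the diagonal operator with block $1 \ot d(\al_t^{-1}(x_g))$ at index $t$, shows that the $(\de_r,\de_t)$-block of $d_H(z)$ equals $1 \ot d(\al_r^{-1}(x_{r t^{-1}}))$. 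Specialising to $r = e$ and $t = s^{-1}$ isolates the block $1 \ot d(x_s)$. Since this block is the compression $V_e^*\, d_H(z)\, V_{s^{-1}}$ by the canonical isometries $V_r \colon \B C^n \ot H \to \ell^2(\Ga,\B C^n \ot H)$, $\zeta \mapsto \zeta \de_r$, we obtain $\| d(x_s) \|_\infty = \| 1 \ot d(x_s) \|_\infty \leq \| d_H(z) \|_\infty$. Taking the supremum over $s \in \Ga$ then yields $L_H^\infty(z) \leq \| d_H(z) \|_\infty$.

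For the second inequality I combine the first with the norm equivalence of \cref{l:equislip} and the $*$-invariance of the horizontal derivation. By \cref{l:equislip} both $\| d_V(z) \|_\infty$ and $\| d_H(z) \|_\infty$ are bounded above by $L_{D_l \ti_\Na D}(z)$, and since $L_l(z) = \| d_V(z) \|_\infty$ this gives $L_l(z) \leq L_{D_l \ti_\Na D}(z)$ and $L_H^\infty(z) \leq L_{D_l \ti_\Na D}(z)$. Because $1 \hot_\Na D$ is selfadjoint, the derivation satisfies $d_H(z^*) = - d_H(z)^*$, whence $\| d_H(z^*) \|_\infty = \| d_H(z) \|_\infty$; applying the first inequality to $z^*$ then gives $L_H^\infty(z^*) \leq \| d_H(z^*) \|_\infty = \| d_H(z) \|_\infty \leq L_{D_l \ti_\Na D}(z)$. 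Taking the maximum of the three estimates produces $L_\infty(z) \leq L_{D_l \ti_\Na D}(z)$. The only genuinely delicate point is the bookkeeping in the compression step of the first inequality; everything else is a formal consequence of results already established.
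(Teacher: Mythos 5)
Your proof is correct and follows essentially the same route as the paper: the first inequality is obtained by isolating the block $1\ot d(x_g)$ of $d_H(z)$ via compression (the paper does this by computing the matrix coefficient $\binn{(e_1\ot\ze)\de_e, d_H(z)(e_1\ot\eta)\de_{g^{-1}}} = \inn{\ze,d(x_g)\eta}$, which is the same computation), and the second follows by combining this with \cref{l:equislip} and the identity $L_l(z)=\|d_V(z)\|_\infty$. Your explicit treatment of the $L_H^\infty(z^*)$ term via $d_H(z^*)=-d_H(z)^*$ is a valid way to handle a step the paper leaves implicit.
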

  \begin{proof}
    The second inequality follows immediately from the first one by an application of \cref{l:equislip} and the observation that $L_l(z) = \| d_V(z) \|_\infty$ for all $z \in \C A \ralg \Ga$ (see the discussion in \cref{subsec:slip-norms}). We therefore focus on proving the first inequality.

Consider an element $z = \sum_{s \in \Ga} \pi_\al(x_s) \la_s \in \C A \ralg \Ga$ and let $g \in \Ga$. For every pair of vectors $\eta$ and $\ze \in H$, it is then a consequence of \eqref{eq:d_H-formula} that we have the explicit formulae
   \[
 \begin{split}
   & \binn{(e_1 \ot \ze) \de_e, d_H(z) (e_1 \ot \eta) \de_{g^{-1}} } 
   = \sum_{s \in \Ga} \binn{(e_1 \ot \ze) \de_e , \big( 1 \ot d(\pi_\al(x_s))\big) (e_1 \ot \eta) \de_{sg^{-1}} } \\
   & \quad = \binn{(e_1 \ot \ze) \de_e , \big(1 \ot d(\pi_\al(x_g)) \big) (e_1 \ot \eta) \de_e }
    = \inn{\ze, d( x_g) \eta } .
\end{split}
    \]
It follows from the above computation that we have the inequality
\[
\| d_H(z) \|_\infty \geq \| d(x_g) \|_\infty = L_D(x_g) = (L_D \ci z)(g)  . 
\]
This in turn shows that $L_H^\infty(z) \leq \| d_H(z) \|_\infty$.
\end{proof}

\begin{proof}[Proof of \cref{introthm:spectral-metric-space} ]
  Recall that the action of $\Ga$ on $A$ is smooth and metrically equicontinuous by assumption. It is furthermore assumed that $(\C A,H,D)$ and $(\B C \Ga, \ell^2(\Ga) \ot \B C^n , D_l)$ are spectral metric spaces and that $\Ga$ is amenable.

  We need to verify that the unital spectral triple $( \C A \ralg \Ga, G, D_l \ti_\Na D)$ is a spectral metric space. However, since the supremum norm $\nvert{\cd}_\infty$ is order-preserving, we get from \cref{thm:main-theorem-crossed} that the pair $(\C A \ralg \Ga, L_\infty)$ is a compact quantum metric space. It now follows from \cref{l:specdomi} and \cref{thm:Rieffels-criterion} that the pair $(\C A \ralg \Ga, L_{D_l \ti_\Na D})$ is a compact quantum metric space.  
\end{proof}}

\section{Unbounded $KK$-theory and proof of \cref{t:unbprod}}\label{sec:unbounded-KK}
{In this section, we work in the setting where $(\C A,H,D)$ is a unital spectral triple on the unital $C^*$-algebra $A$ and $\Ga$ is a countable discrete group which acts on $A$ in a smooth and metrically equicontinuous way. We shall moreover fix a proper matrix length function $l \colon \Ga \to M_n(\B C)$. We are thus in the context of \cref{thm:crossed-product-spectral-triple} so that $\big( \C A \ralg \Ga, G, D_l \ti_\Na D\big)$ is a unital spectral triple on the reduced crossed product $A \rtimes \Ga$. Our aim is now to interpret this unital spectral triple as the unbounded Kasparov product of the unbounded Kasparov module $\big( \C A \ralg \Ga, \ell^2(\Ga,A) \ot \B C^n, D_l \big)$ constructed in \cref{ss:length} and the unital spectral triple $(\C A,H,D)$. Let us emphasize that this point of view is different from the point of view mentioned in \cite[Remark 2.9]{HSWZ:STC} involving an equivariant version of the \emph{external} unbounded Kasparov product. Indeed, we are in this section working with the \emph{internal} unbounded Kasparov product which was introduced in \cite{Mes:UKC} and further developped in \cite{KaLe:SFU,MeRe:NMU}. 

Before we continue any further let us recall that the unbounded Kasparov module $\big( \C A \ralg \Ga, \ell^2(\Ga,A) \ot \B C^n, D_l \big)$ gives rise to a class in $KK$-theory $[D_l] \in KK^p(A \rtimes \Ga,A)$ (where $p = 0$ if $l$ is graded and $p = 1$ otherwise). This class in $KK$-theory is referred to as the \emph{Baaj-Julg bounded transform} of the unbounded Kasparov module and it is represented by the Kasparov module $\big( \ell^2(\Ga,A) \ot \B C^n, D_l(1 + D_l^2)^{-1/2} \big)$; see \cite{BaJu:TBK}. Similarly, we get a class in $K$-homology $[D] \in K^q(A)$ (where $q = 0$ in the graded case and $q = 1$ otherwise) by applying the Baaj-Julg bounded transform to the unital spectral triple $(\C A,H,D)$. Thus, in the case where the unital $C^*$-algebra $A$ is separable we may form the internal Kasparov product of our two classes, obtaining a $K$-homology class $[D_l] \hot_A [D] \in K^{p + q}( A \rtimes \Ga)$ for the reduced crossed product (the parity $p+q$ should be understood modulo $2$); see \cite{Kas:OKF}.

The point of applying the unbounded Kasparov product to our data is that this technique gives a systematic way of representing the $K$-homology class $[D_l] \hot_A [D]$ by a unital spectral triple on the reduced crossed product. Thus, letting $[D_l \ti_\Na D] \in K^{p+q}(A \rtimes \Ga)$ denote the $K$-homology class obtained as the Baaj-Julg bounded transform of the unital spectral triple $\big( \C A \ralg \Ga, G, D_l \ti_\Na D\big)$, we get the identity
\[
[D_l \ti_\Na D] = [D_l] \hot_A [D] .
\]
Ultimately, we are here applying \cite{Kuc:KKU} which provides conditions ensuring that a given unbounded Kasparov module represents an internal Kasparov product. As a byproduct, it is also a consequence of the unbounded $KK$-theoretic point of view that the data $\big( \C A \ralg \Ga, G, D_l \ti_\Na D\big)$ is in fact a unital spectral triple, but this also follows from \cite{HSWZ:STC} (at least for $q = 1$). 

We are in this section focusing on the case where $p$ and $q$ are both odd and our approach to the unbounded Kasparov product follows the scheme presented in \cite{KaLe:SFU}. The remaining combinations of parities can be dealt with by using similar techniques and the interested reader can consult \cite{Mes:UKC,MeRe:NMU} for a detailed treatment in the setting where $p$ and $q$ are both even. Notice that \cite{Mes:UKC,MeRe:NMU} are written for $\zz/2\zz$-graded $C^*$-algebras and the formulas presented there can thus be adapted to take care of the remaining combinations of parities as well (by performing suitable manipulations with Clifford algebras). We shall moreover simplify matters slightly by assuming that $n = 1$ so that our ungraded unbounded Kasparov module is of the form $\big( \C A \ralg \Ga, \ell^2(\Ga,A), D_l \big)$.}


In order to prove \cref{t:unbprod} using unbounded $KK$-theoretic techniques it is necessary to write down a \emph{correspondence} from $(\C A \ralg \Ga, \ell^2(\Ga,A), D_l)$ to $(\C A,H,D)$, see \cite[Theorem 6.7]{KaLe:SFU}. According to \cite[Definition 6.3]{KaLe:SFU} the first step is to upgrade $\C A$ to an \emph{operator $*$-algebra} and to find an appropriate \emph{Hermitian operator module} sitting inside $E := \ell^2(\Ga,A)$. Both of these structures do in fact come from the closable derivation $d \colon \C A \to \B L(H)$.\\

We let $A_d \su A$ denote the domain of the closure of $d$ and we view $A_d$ as a closed unital subalgebra of $\B L(H \op H)$ via the injective unital algebra homomorphism
\[
\pi_d \colon A_d \to \B L(H \op H) \quad \pi_d(x) := \ma{cc}{x & 0 \\ d(x) & x} .
\]
In this fashion $A_d$ becomes a unital operator algebra; {see e.g. \cite[Chapter 2]{BlMe:OAM} for more details.} It turns out that $A_d$ carries even more structure since the involution from $A$ induces a completely isometric involution on $A_d$ and this entails that $A_d$ is a unital operator $*$-algebra ($A_d$ is not a $C^*$-algebra since $\pi_d$ does not preserve the adjoint operation). By construction it holds that the inclusion $A_d \to A$ is a completely contractive unital $*$-homomorphism and similarly that the derivation $d : A_d \to \B L(H)$ is completely contractive.\\

Let us now construct the relevant Hermitian operator module $E_\Na \su E$. We start out by observing that the elements in the Hilbert $C^*$-module $\ell^2(\Ga,\B L(H))$ can be viewed as bounded operators from $H$ to $\ell^2(\Ga,H)$. Indeed, an element $T = \sum_{s \in \Ga} T_s \de_s$ which belongs to $\ell^2(\Ga,\B L(H))$ gives rise to the bounded operator given by 
\begin{equation}\label{eq:boundedrep}
T(\ze) := \sum_{s \in \Ga} T_s(\ze) \de_s \in \ell^2(\Ga,H) \quad \T{for all } \ze \in H .
\end{equation}
{In this fashion, we have a well-defined adjoint $T^* \in \B L\big( \ell^2(\Ga,H), H \big)$ given by the explicit formula
\begin{equation}\label{eq:adjoint}
T^*\Big( \sum_{s \in \Ga}\ze_s \de_s \Big) = \sum_{s \in \Ga} T_s^*(\ze_s) .
\end{equation}
Using this description,} the inner product of two elements $T$ and $S$ in the Hilbert $C^*$-module $\ell^2(\Ga,\B L(H))$ is just given by the product of bounded operators $\inn{T,S} = T^* S$. Since the unital $C^*$-algebra $A$ is identified with a unital $C^*$-subalgebra of $\B L(H)$ we may also view the elements in the Hilbert $C^*$-module $E = \ell^2(\Ga,A)$ as elements inside $\ell^2(\Ga,\B L(H))$ and the above considerations therefore apply here as well.

The next step is to introduce the closable Hermitian $d$-connection
\[
\Na \colon C_c(\Ga, \C A) \to \ell^2(\Ga,\B L(H)), \quad \Na( a \de_s) := d(a) \de_s  .
\]
The fact that $\Na$ is a Hermitian $d$-connection means that $\Na$ is $\B C$-linear and satisfies the identities
\begin{equation}\label{eq:leibherm}
\Na( \xi \cd a) = \Na(\xi) \cd a + \xi \cd d(a) \, \, \T{ and } \, \, \, 
d( \inn{\xi,\eta} ) = \xi^* \cd \Na(\eta) - \Na(\xi)^* \cd \eta
\end{equation}
for all $\xi,\eta \in C_c(\Ga,\C A)$ and all $a \in \C A$. Notice that the adjoint operations in the second equation of \eqref{eq:leibherm} refer to adjoints of bounded operators from $H$ to $\ell^2(\Ga,H)$ using the representation from \eqref{eq:boundedrep} {and the associated formula from \eqref{eq:adjoint}.}

We define $E_\Na \su E$ as the domain of the closure of $\Na$. In order to provide $E_\Na$ with the structure of a Hermitian operator module we first identify $E_\Na$ with a closed subspace of the bounded operators $\B L\big( H^{\op 2}, \ell^2(\Ga,H)^{\op 2}  \big)$. This is carried out via the linear map
\[
\pi_\Na \colon E_\Na \to \B L\big( H^{\op 2}, \ell^2(\Ga,H)^{\op 2}  \big), \quad \pi_\Na(\xi) := \ma{cc}{\xi & 0 \\ \Na(\xi) & \xi},
\]
where we recall that both $\xi$ and $\Na(\xi)$ can be viewed as bounded operators from $H$ to $\ell^2(\Ga,H)$ (in the case where $\xi$ belongs to $E_\Na$). It then follows from the first identity in \eqref{eq:leibherm} that $E_\Na$ becomes a right operator module over the {operator algebra} $A_d$ with right module structure induced by the right module structure {on $E$; see e.g. \cite[Chapter 3]{BlMe:OAM} for more details.} Notice that the relevant completely contractive nature of the right module structure follows since the unital algebra homomorphism $\pi_d$ and the linear map $\pi_\Na$ are compatible in so far that
\[
\pi_\Na(\xi \cd a) = \pi_\Na(\xi) \cd \pi_d(a) \quad \T{for all } \xi \in E_\Na \T{ and } a \in A_d .
\]
Because of the second identity in \eqref{eq:leibherm} we get that the $A$-valued inner product on $E$ induces a completely contractive hermitian form $\inn{\cd,\cd} \colon E_\Na \ti E_\Na \to A_d$. {Notice in this respect that
\[
U \pi_\Na(\xi)^* U \cd \pi_\Na(\eta) = \pi_d( \inn{\xi,\eta})  \quad \T{for all } \xi, \eta \in E_\Na,
\]
where $U$ is the selfadjoint unitary operator given by $U = \ma{cc}{0 & i \\ -i & 0}$.} We therefore get that $E_\Na$ is a Hermitian operator module over $A_d$ and it follows by construction that both the inclusion $E_\Na \to E$ and the connection $\Na \colon E_\Na \to \ell^2(\Ga,\B L(H))$ are completely contractive operations. For more details on these constructions we refer the reader to \cite{BlKaMe:OAG}.\\

Under the conditions stated in \cref{t:unbprod}, we are going to show that the pair $(E_\Na,\Na)$ is a correspondence from $(\C A \ralg \Ga, \ell^2(\Ga,A), D_l)$ to $(\C A,H,D)$. {One of the main conditions on a correspondence (see \cite[Definition 6.3]{KaLe:SFU}) concerns the structure of the associated Hermitian operator module. In the terminology of \cite{KaLe:SFU} it is required that this Hermitian operator module is an \emph{operator $*$-module}, which means that the Hermitian operator module sits as a direct summand in a standard Hermitian operator module, see \cite[Definition 3.4]{KaLe:SFU}. We therefore spend some more time analysing the structure of the Hermitian operator module $E_\Na$ focusing on the case where the countable discrete group $\Ga$ is infinite (the finite case is of course much easier to deal with).}

Since $\Ga$ is countably infinite we may identify the Hilbert $C^*$-module $E = \ell^2(\Ga,A)$ with the standard module $\ell^2(\nn,A)$ via an isomorphism of sets $\Ga \cong \nn$. Our isomorphism of Hilbert $C^*$-modules then induces a completely isometric isomorphism of Hermitian operator modules between $E_\Na \su E$ and the standard column module $\ell^2(\nn,A_d) \su \ell^2(\nn,A)$ (which is denoted by $A_d^\infty$ in \cite[Definition 3.3]{KaLe:SFU} and by $C_\nn(A_d)$ in \cite[Section 3.1.13]{BlMe:OAM}). Notice that the Hermitian operator module structure of the standard column module is induced by the Hilbert $C^*$-module structure of $\ell^2(\nn,A)$. {In particular, we get that $E_\Na$ is an operator $*$-module.}  

In the present situation, the unbounded selfadjoint operator $1 \hot_{\Na} D$ constructed in \cite[Section 5.1]{KaLe:SFU} agrees with the closure of the diagonal operator
\[
\T{diag}(D) \colon C_c(\Ga, \T{Dom}(D)) \to \ell^2(\Ga,H), \quad \T{diag}(D)(\xi \de_s) = D(\xi) \de_s .
\]
The remaining leg of the unbounded product operator is the unbounded selfadjoint operator $D_l \hot 1$ which is obtained as the closure of the unbounded symmetric operator
\[
\C D_l \ot 1 \colon C_c(\Ga,\T{Dom}(D)) \to \ell^2(\Ga,H), \quad (\C D_l \ot 1)(\xi \de_s) = l(s) \cd \xi \de_s . 
\]

\begin{proposition}\label{p:correspond}
Suppose that the action of $\Ga$ on $A$ is {smooth and metrically} equicontinuous with respect to the unital spectral triple $(\C A,H,D)$. Suppose moreover that $l \colon \Ga \to \B R$ is a proper length function. Then the pair $(E_\Na,\Na)$ is a correspondence from $(\C A \ralg \Ga, \ell^2(\Ga,A), D_l)$ to $(\C A,H,D)$.
\end{proposition}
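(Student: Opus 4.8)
The plan is to verify the defining conditions of a correspondence from \cite[Definition 6.3]{KaLe:SFU}. The structural backbone --- that $E_\Na$ is an operator $*$-module over the operator $*$-algebra $A_d$, equipped with the closable Hermitian $d$-connection $\Na$ --- has already been assembled in the preceding discussion, where $E_\Na$ was identified completely isometrically with the standard column module $\ell^2(\nn, A_d)$. What remains is to check the conditions that tie the representation of $\C A \ralg \Ga$ on $E = \ell^2(\Ga,A)$ and the operator $D_l$ to this differentiable structure. By linearity it suffices to treat the generators $b = \pi_\al(x) \la_g$ with $x \in \C A$ and $g \in \Ga$, working on the algebraic core $C_c(\Ga, \C A) \su E_\Na$.

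First I would check that each generator $b = \pi_\al(x)\la_g$ preserves $E_\Na$ and acts as an adjointable operator on the Hermitian module. From \eqref{eq:generator} one has $b(a \de_s) = \al_{gs}^{-1}(x)\, a\, \de_{gs}$, and smoothness of the action ($\al_h(\C A) = \C A$ for all $h$) guarantees $\al_{gs}^{-1}(x) \in \C A$, so $b$ maps $C_c(\Ga,\C A)$ into itself. Applying the Leibniz rule from the first identity in \eqref{eq:leibherm} gives, for $\xi = \sum_s a_s \de_s$,
\[
\Na(b\xi) = \sum_s d\big(\al_{gs}^{-1}(x)\big)\, a_s\, \de_{gs} + \sum_s \al_{gs}^{-1}(x)\, d(a_s)\, \de_{gs},
\]
where the second sum equals $b \cdot \Na(\xi)$ while the first is the multiplication operator associated with the family $\{ d(\al_{gs}^{-1}(x)) \}_{s}$. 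Metric equicontinuity is exactly what bounds $\sup_{s} \| d(\al_s^{-1}(x)) \|_\infty$, so this first term is a bounded operator into $\ell^2(\Ga,\B L(H))$ and the operator-module norm of $b$ on $E_\Na$ is controlled. Passing to matrices via $\pi_\Na$ upgrades this to complete boundedness and adjointability.

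Next I would verify the connection condition, which is the heart of the matter: the failure of $b$ to commute with $\Na$ must extend to a bounded operator on $E_\Na$. The computation above isolates precisely this obstruction as the first sum, namely multiplication by the uniformly bounded operator-valued function $s \mapsto d(\al_{gs}^{-1}(x))$, whose boundedness is again a direct consequence of metric equicontinuity. In parallel I would record that $d_l(b) = \T{cl}([D_l, b]) = M_{\varphi_g}\, b$ by \eqref{eq:commutator-with-Dl}, which is $b$ followed by multiplication by the bounded matrix function $\varphi_g$; hence $d_l(b)$ also preserves $C_c(\Ga,\C A)$ and is $\Na$-differentiable by the same Leibniz computation. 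Finally, properness of $l$ ensures via \cref{p:lengthunb} that $D_l$ has compact resolvent and that $C_c(\Ga,\T{Dom}(D))$ serves as a common core on which $\C D_l \ot 1$ and $\T{diag}(D)$ enjoy the compatibility required for the product operator.

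The main obstacle I anticipate is establishing the uniform bounds across all of $\Ga$ in the connection and differentiability conditions: everything reduces, through the Leibniz rule \eqref{eq:leibherm}, to controlling the family $\{ d(\al_s^{-1}(x)) \}_{s \in \Ga}$, and the entire argument rests on metric equicontinuity supplying a finite supremum here --- without it the connection condition would fail. Once these bounds are secured, the remaining requirements of \cite[Definition 6.3]{KaLe:SFU} amount to bookkeeping about the operator-module structure already set up, and the conclusion that $(E_\Na,\Na)$ is a correspondence follows.
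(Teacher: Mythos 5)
Your handling of condition $(3)$ of \cite[Definition 6.3]{KaLe:SFU} is essentially the paper's argument: the obstruction to $b=\pi_\al(x)\la_g$ commuting with the connection is multiplication by the uniformly bounded family $\{d(\al_{gs}^{-1}(x))\}_{s\in\Ga}$, and metric equicontinuity is exactly what supplies the finite supremum. This is the same computation that produces the horizontal derivation $d_H$ in \cref{subsec:slip-norms}, just carried out on the module $E_\Na$ rather than on the internal tensor product $\ell^2(\Ga,H)$; and you are right that the structural conditions (operator $*$-module, Hermitian $d$-connection) are already disposed of by the preceding discussion.

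There is, however, a genuine gap at condition $(4)$ of \cite[Definition 6.3]{KaLe:SFU}: the requirement that the commutator between $D_l\hot 1$ and $1\hot_\Na D$ be relatively bounded in the sense of \cite{KaLe:LGP}. You gesture at this with the phrase ``the compatibility required for the product operator'' and attribute it to properness of $l$, but properness is not the relevant input here --- it is only needed for the compact-resolvent condition of the unbounded Kasparov module $(\C A\ralg\Ga,\ell^2(\Ga,A),D_l)$ itself, which was already settled in \cref{p:lengthunb}. What actually has to be checked is, first, that the resolvents $(D_l\hot 1-i\mu)^{-1}$, $\mu\in\B R\sem\{0\}$, preserve the core $C_c(\Ga,\T{Dom}(D))$ for $1\hot_\Na D$ (clear, since they act diagonally by the scalars $(l(s)-i\mu)^{-1}$), and second, that on this core
\[
[D_l\hot 1,\,1\hot_\Na D](\xi\de_s)=l(s)\cd D(\xi)\de_s-D(l(s)\cd\xi)\de_s=0,
\]
which holds precisely because $l(s)$ is a real scalar. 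The commutator being zero makes condition $(4)$ trivial once stated, but it must be stated: it is this condition that feeds into \cite[Theorem 7.10]{KaLe:LGP} to give selfadjointness of $D_l\ti_\Na D$ on the intersection of domains, without which the Kucerovsky criterion cannot be invoked in the proof of \cref{t:unbprod}.
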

\begin{proof}
  We only need to verify condition $(3)$ and $(4)$ of \cite[Definition 6.3]{KaLe:SFU}.
 
  Let $x \in \C A$ and $g \in \Ga$ be given. To verify condition $(3)$ it suffices to show that $\pi_\al(x) \la_g$ preserves the core $C_c(\Ga,\T{Dom}(D))$ for $1 \hot_{\Na} D$ and that the commutator
  \[
[ 1 \hot_{\Na} D, \pi_\al(x) \la_g] \colon C_c(\Ga,\T{Dom}(D)) \to \ell^2(\Ga,H)
\]
extends to a bounded operator on $\ell^2(\Ga,H)$. But this was already established in \cref{subsec:slip-norms} {(see the discussion regarding the horizontal derivation $d_H$), so we are left with verifying condition $(4)$.}

{This remaining condition says} that the ``commutator'' between $D_l \hot 1$ and $1 \hot_\Na D$ is relatively bounded in the sense which is carefully spelled out and applied in \cite{KaLe:LGP}. In our case, this condition is particularly easy to verify by considering the core $C_c(\Ga,\T{Dom}(D))$ for $1 \hot_\Na D$. {Indeed, it clearly holds that $(D_l \hot 1 - i \mu)^{-1}$ preserves the core $C_c(\Ga,\T{Dom}(D))$ for all $\mu \in \B R \sem \{0\}$. Moreover, for every $\xi \in \T{Dom}(D)$ and $s \in \Ga$ we have that
  \[
[ D_l \hot 1, 1 \hot_\Na D] ( \xi \de_s) = l(s) \cd D(\xi) \de_s - D( l(s) \cd \xi) \de_s = 0,
  \]
entailing that the relevant commutator of unbounded operators is equal to zero.} This establishes condition $(4)$ and hence that $(E_\Na, \Na)$ is a correspondence.
\end{proof}

With \cref{p:correspond} at hand, we are now ready to prove the main result of this subsection using unbounded $KK$-theoretic techniques. Strictly speaking we are only giving the proof for $n = 1$ and $\Ga$ being countably infinite, but the same methods apply with minor modifications in the more general setting where $n \in \N$ and $\Ga$ is allowed to be finite. As explained in the beginning of this subsection we have chosen to focus on the case where both of the parities $p$ and $q$ are odd. 


\begin{proof}[Proof of \cref{t:unbprod} (odd times odd)]
  By combining \cite[Theorem 6.7]{KaLe:SFU} and \cite[Theorem 7.5]{KaLe:SFU} with \cref{p:correspond} we only need to verify that the unbounded product operator $D_l \ti_\Na D$ which we constructed {right before \cref{r:nabla}} does indeed agree with the unbounded selfadjoint operator
  \[
     \begin{pmatrix}
  0 & D_l \hot 1 + i (1 \hot_\Na D) \\
  D_l \hot 1 - i (1 \hot_\Na D) & 0  \end{pmatrix}
 \hspace{-0.07cm} \colon \hspace{-0.07cm} \big( \T{Dom}(D_l \hot 1) \cap \T{Dom}(1 \hot_\Na D) \big)^{\op 2}
  \to \ell^2(\Ga,H)^{\op 2} .
  \]
  
  The description of the previous operator as selfadjoint on the intersection of domains $\big( \T{Dom}(D_l \hot 1) \cap \T{Dom}(1 \hot_\Na D) \big)^{\op 2}$ is contained in the statement of \cite[Theorem 6.7]{KaLe:SFU}. However, this part regarding selfadjointness is in fact a consequence of \cite[Theorem 7.10]{KaLe:LGP}.
  
  Since the two unbounded operators in question already agree on the core $C_c(\Ga, \T{Dom}(D))^{\op 2}$ for the unbounded product operator $D_l \ti_\Na D$, it suffices to establish that $D_l \ti_\Na D$ is indeed selfadjoint. {This does in turn hold,} if we can show that the restrictions of the two unbounded operators $D_l \ti_\Na D \pm i$ to the core $C_c(\Ga, \T{Dom}(D))^{\op 2}$ both have dense image in $\ell^2(\Ga,H)^{\op 2}$. But this follows by recording that the unbounded operator
  \[
T_s:=\begin{pmatrix}0 & l(s) + i D \\ l(s) - i D & 0\end{pmatrix} \colon \T{Dom}(D)^{\op 2} \to H^{\op 2}
\]
is selfadjoint for all $s \in \Ga$. {Indeed, this implies} that both of the unbounded operators $T_s\pm i$ are surjective and hence that the dense subspace $C_c(\Gamma, H)^{\op 2} \su \ell^2(\Ga,H)^{\op 2}$ is contained in the range of the two unbounded operators $D_l \ti_\Na D \pm i$.
\end{proof}

\bibliographystyle{siam}
\bibliography{bibliography.bib}

\begin{thebibliography}{10}

\bibitem{AgKa:PSM}
{\sc K.~Aguilar and J.~Kaad}, {\em The {P}odle\'{s} sphere as a spectral metric
  space}, J. Geom. Phys., 133 (2018), pp.~260--278.

\bibitem{AKK:Podcon}
{\sc K.~Aguilar, J.~Kaad, and D.~Kyed}, {\em The {P}odle\'{s} spheres converge
  to the sphere}, Comm. Math. Phys., 392 (2022), pp.~1029--1061.

\bibitem{LatAgu:AF}
{\sc K.~Aguilar and F.~Latr\'{e}moli\`ere}, {\em Quantum ultrametrics on {AF}
  algebras and the {G}romov-{H}ausdorff propinquity}, Studia Math., 231 (2015),
  pp.~149--193.

\bibitem{BaJu:TBK}
{\sc S.~Baaj and P.~Julg}, {\em Th\'{e}orie bivariante de {K}asparov et
  op\'{e}rateurs non born\'{e}s dans les {$C\sp{\ast} $}-modules hilbertiens},
  C. R. Acad. Sci. Paris S\'{e}r. I Math., 296 (1983), pp.~875--878.

\bibitem{BMR:DSS}
{\sc J.~Bellissard, M.~Marcolli, and K.~Reihani}, {\em Dynamical systems on
  spectral metric spaces}, Preprint,  (2010).
\newblock \href{https://arxiv.org/abs/1008.4617}{arXiv:1008.4617}.

\bibitem{BlKaMe:OAG}
{\sc D.~Blecher, J.~Kaad, and B.~Mesland}, {\em Operator
  {$\ast$}-correspondences in analysis and geometry}, Proc. Lond. Math. Soc.
  (3), 117 (2018), pp.~303--344.

\bibitem{BlMe:OAM}
{\sc D.~P. Blecher and C.~Le~Merdy}, {\em Operator algebras and their
  modules---an operator space approach}, vol.~30 of London Mathematical Society
  Monographs. New Series, The Clarendon Press, Oxford University Press, Oxford,
  2004.
\newblock Oxford Science Publications.

\bibitem{Brown-Ozawa}
{\sc N.~P. Brown and N.~Ozawa}, {\em {$C^*$}-algebras and finite-dimensional
  approximations}, vol.~88 of Graduate Studies in Mathematics, American
  Mathematical Society, Providence, RI, 2008.

\bibitem{ChristRieffel2017}
{\sc M.~Christ and M.~A. Rieffel}, {\em Nilpotent group {${\rm C}^*$}-algebras
  as compact quantum metric spaces}, Canad. Math. Bull., 60 (2017), pp.~77--94.

\bibitem{Christensen-Ivan:spectral-triples}
{\sc E.~Christensen and C.~Ivan}, {\em Spectral triples for {AF}
  {$C^*$}-algebras and metrics on the {C}antor set}, J. Operator Theory, 56
  (2006), pp.~17--46.

\bibitem{Con:CFH}
{\sc A.~Connes}, {\em Compact metric spaces, {F}redholm modules, and
  hyperfiniteness}, Ergodic Theory Dynam. Syst., 9 (1989), pp.~207--220.

\bibitem{Con:NCG}
\leavevmode\vrule height 2pt depth -1.6pt width 23pt, {\em Noncommutative
  geometry}, Academic Press, Inc., San Diego, CA, 1994.

\bibitem{walter-connes:truncations}
{\sc A.~{Connes} and W.~D. {van Suijlekom}}, {\em {Spectral truncations in
  noncommutative geometry and operator systems}}, {Commun. Math. Phys.}, 383
  (2021), pp.~2021--2067.

\bibitem{CvS:Truncations}
\leavevmode\vrule height 2pt depth -1.6pt width 23pt, {\em {Tolerance relations
  and operator systems}}, {Acta Sci. Math.}, 88 (2022), pp.~101--129.

\bibitem{CMRV2008}
{\sc G.~Cornelissen, M.~Marcolli, K.~Reihani, and A.~Vdovina}, {\em
  Noncommutative geometry on trees and buildings}, in Traces in number theory,
  geometry and quantum fields, vol.~E38 of Aspects Math., Friedr. Vieweg,
  Wiesbaden, 2008, pp.~73--98.

\bibitem{FaLaLaPa:STN}
{\sc C.~Farsi, T.~B. Landry, N.~S. Larsen, and J.~A. Packer}, {\em Spectral
  triples for noncommutative solenoids and a {W}iener's lemma}, J. Noncommut.
  Geom. [to appear],  (2022).
\newblock \href{https://arxiv.org/abs/2212.07470}{arXiv:2212.07470}.

\bibitem{walter-estrada-cp-grps}
{\sc Y.~Gaudillot-Estrada and W.~D. van Suijlekom}, {\em Convergence of
  spectral truncations for compact metric groups}, Preprint,  (2023).
\newblock \href{https://arxiv.org/abs/2310.14733}{arXiv:2310.14733}.

\bibitem{GvS:TolFin}
{\sc M.~Gielen and W.~D. van Suijlekom}, {\em Operator systems for tolerance
  relations on finite sets}, Indag. Math. (N.S.), 34 (2023), pp.~606--621.

\bibitem{gromov-groups-of-polynomial-growth-and-expanding-maps}
{\sc M.~Gromov}, {\em Groups of polynomial growth and expanding maps}, Inst.
  Hautes \'{E}tudes Sci. Publ. Math.,  (1981), pp.~53--73.

\bibitem{HSWZ:STC}
{\sc A.~Hawkins, A.~Skalski, S.~White, and J.~Zacharias}, {\em On spectral
  triples on crossed products arising from equicontinuous actions}, Math.
  Scand., 113 (2013), pp.~262--291.

\bibitem{PutJul:subshifts}
{\sc A.~Julien and I.~Putnam}, {\em Spectral triples for subshifts}, J. Funct.
  Anal., 270 (2016), pp.~1031--1063.

\bibitem{Kaad2023}
{\sc J.~Kaad}, {\em External products of spectral metric spaces}, Preprint,
  (2023).
\newblock \href{https://arxiv.org/abs/2304.03979}{arXiv:2304.03979v2}.

\bibitem{KK:DCQ}
{\sc J.~{Kaad} and D.~{Kyed}}, {\em {Dynamics of compact quantum metric
  spaces}}, {Ergodic Theory Dyn. Syst.}, 41 (2021), pp.~2069--2109.

\bibitem{KaadKyed2022}
{\sc J.~Kaad and D.~Kyed}, {\em The quantum metric structure of quantum
  {SU(2)}}, Preprint,  (2022).
\newblock \href{https://arxiv.org/abs/2205.06043}{arXiv:2205.06043}.

\bibitem{KaLe:LGP}
{\sc J.~Kaad and M.~Lesch}, {\em A local global principle for regular operators
  in {H}ilbert {$C^*$}-modules}, J. Funct. Anal., 262 (2012), pp.~4540--4569.

\bibitem{KaLe:SFU}
\leavevmode\vrule height 2pt depth -1.6pt width 23pt, {\em Spectral flow and
  the unbounded {K}asparov product}, Adv. Math., 248 (2013), pp.~495--530.

\bibitem{KaadMikkelsen2023}
{\sc J.~Kaad and M.~H. Mikkelsen}, {\em Spectral metrics on quantum projective
  spaces}, J. Funct. Anal., 287 (2024), pp.~Paper No. 110466, 38.

\bibitem{KaRu:FSE}
{\sc L.~V. Kantorovi\v{c} and G.~v. Rubin\v{s}te\u{\i}n}, {\em On a functional
  space and certain extremum problems}, Dokl. Akad. Nauk SSSR (N.S.), 115
  (1957), pp.~1058--1061.

\bibitem{KaRu:OSC}
\leavevmode\vrule height 2pt depth -1.6pt width 23pt, {\em On a space of
  completely additive functions}, Vestnik Leningrad. Univ., 13 (1958),
  pp.~52--59.

\bibitem{Kas:OKF}
{\sc G.~G. Kasparov}, {\em The operator {$K$}-functor and extensions of
  {$C\sp{\ast} $}-algebras}, Izv. Akad. Nauk SSSR Ser. Mat., 44 (1980),
  pp.~571--636, 719.

\bibitem{Kas:EKK}
\leavevmode\vrule height 2pt depth -1.6pt width 23pt, {\em Equivariant
  {$KK$}-theory and the {N}ovikov conjecture}, Invent. Math., 91 (1988),
  pp.~147--201.

\bibitem{Klisse2023}
{\sc M.~Klisse}, {\em Crossed products as compact quantum metric spaces}, Can.
  J. Math. [to appear],  (2023).
\newblock \href{https://arxiv.org/abs/2303.17903}{arXiv:2303.17903}.

\bibitem{Kuc:KKU}
{\sc D.~Kucerovsky}, {\em The {$KK$}-product of unbounded modules}, $K$-Theory,
  11 (1997), pp.~17--34.

\bibitem{KN:finiteness}
{\sc D.~Kyed and R.~Nest}, {\em Finiteness of metrics on state spaces}, Bull.
  London Math. Soc. [to appear],  (2023).

\bibitem{Lance1995}
{\sc E.~C. Lance}, {\em Hilbert {$C^*$}-modules}, vol.~210 of London
  Mathematical Society Lecture Note Series, Cambridge University Press,
  Cambridge, 1995.
\newblock A toolkit for operator algebraists.

\bibitem{Lat:AQQ}
{\sc F.~Latr\'{e}moli\`ere}, {\em Approximation of quantum tori by finite
  quantum tori for the quantum {G}romov-{H}ausdorff distance}, J. Funct. Anal.,
  223 (2005), pp.~365--395.

\bibitem{Lat:DGH}
\leavevmode\vrule height 2pt depth -1.6pt width 23pt, {\em The dual
  {G}romov-{H}ausdorff propinquity}, J. Math. Pures Appl. (9), 103 (2015),
  pp.~303--351.

\bibitem{Lat:QGH}
\leavevmode\vrule height 2pt depth -1.6pt width 23pt, {\em The quantum
  {G}romov-{H}ausdorff propinquity}, Trans. Amer. Math. Soc., 368 (2016),
  pp.~365--411.

\bibitem{Lat:Compactness-theorem}
\leavevmode\vrule height 2pt depth -1.6pt width 23pt, {\em A compactness
  theorem for the dual {G}romov-{H}ausdorff propinquity}, Indiana Univ. Math.
  J., 66 (2017), pp.~1707--1753.

\bibitem{Lat:MGP}
\leavevmode\vrule height 2pt depth -1.6pt width 23pt, {\em The modular
  {G}romov-{H}ausdorff propinquity}, Dissertationes Math., 544 (2019), p.~70.

\bibitem{Lat:GPS}
\leavevmode\vrule height 2pt depth -1.6pt width 23pt, {\em The
  {G}romov-{H}ausdorff propinquity for metric spectral triples}, Adv. Math.,
  404 (2022), p.~Paper No. 108393.

\bibitem{LatPack:Solenoids}
{\sc F.~Latr\'{e}moli\`ere and J.~Packer}, {\em Noncommutative solenoids}, New
  York J. Math., 24A (2018), pp.~155--191.

\bibitem{walter-tori}
{\sc M.~Leimbach and W.~D. van Suijlekom}, {\em Gromov-{H}ausdorff convergence
  of spectral truncations for tori}, Adv. Math., 439 (2024), pp.~Paper No.
  109496, 26.

\bibitem{MeLe:SRS}
{\sc M.~Lesch and B.~Mesland}, {\em Sums of regular self-adjoint operators in
  {H}ilbert-{$C^*$}-modules}, J. Math. Anal. Appl., 472 (2019), pp.~947--980.

\bibitem{Li:DCQ}
{\sc H.~Li}, {\em {$\theta$}-deformations as compact quantum metric spaces},
  Comm. Math. Phys., 256 (2005), pp.~213--238.

\bibitem{Li:GH-dist}
\leavevmode\vrule height 2pt depth -1.6pt width 23pt, {\em Order-unit quantum
  {G}romov-{H}ausdorff distance}, J. Funct. Anal., 231 (2006), pp.~312--360.

\bibitem{Li:ECQ}
\leavevmode\vrule height 2pt depth -1.6pt width 23pt, {\em Compact quantum
  metric spaces and ergodic actions of compact quantum groups}, J. Funct.
  Anal., 256 (2009), pp.~3368--3408.

\bibitem{Mes:UKC}
{\sc B.~Mesland}, {\em Unbounded bivariant {$K$}-theory and correspondences in
  noncommutative geometry}, J. Reine Angew. Math., 691 (2014), pp.~101--172.

\bibitem{MeRe:NMU}
{\sc B.~Mesland and A.~Rennie}, {\em Nonunital spectral triples and metric
  completeness in unbounded {$KK$}-theory}, J. Funct. Anal., 271 (2016),
  pp.~2460--2538.

\bibitem{OzawaRieffel2005}
{\sc N.~Ozawa and M.~A. Rieffel}, {\em Hyperbolic group {$C^*$}-algebras and
  free-product {$C^*$}-algebras as compact quantum metric spaces}, Canad. J.
  Math., 57 (2005), pp.~1056--1079.

\bibitem{Pat:CST}
{\sc A.~L.~T. Paterson}, {\em Contractive spectral triples for crossed
  products}, Math. Scand., 114 (2014), pp.~275--298.

\bibitem{Rie:MSA}
{\sc M.~A. Rieffel}, {\em Metrics on states from actions of compact groups},
  Doc. Math., 3 (1998), pp.~215--229.

\bibitem{Rie:MSS}
\leavevmode\vrule height 2pt depth -1.6pt width 23pt, {\em Metrics on state
  spaces}, Doc. Math., 4 (1999), pp.~559--600.

\bibitem{Rie:CQM}
\leavevmode\vrule height 2pt depth -1.6pt width 23pt, {\em Compact quantum
  metric spaces}, in Operator algebras, quantization, and noncommutative
  geometry, vol.~365 of Contemp. Math., Amer. Math. Soc., Providence, RI, 2004,
  pp.~315--330.

\bibitem{Rie:GHD}
\leavevmode\vrule height 2pt depth -1.6pt width 23pt, {\em Gromov-{H}ausdorff
  distance for quantum metric spaces}, Mem. Amer. Math. Soc., 168 (2004),
  pp.~1--65.
\newblock Appendix 1 by Hanfeng Li.

\bibitem{Rie:MSG}
\leavevmode\vrule height 2pt depth -1.6pt width 23pt, {\em Matrix algebras
  converge to the sphere for quantum {G}romov-{H}ausdorff distance}, Mem. Amer.
  Math. Soc., 168 (2004), pp.~67--91.

\bibitem{Rie:matricial-bridges}
\leavevmode\vrule height 2pt depth -1.6pt width 23pt, {\em Matricial bridges
  for ``matrix algebras converge to the sphere''}, in Operator algebras and
  their applications, vol.~671 of Contemp. Math., Amer. Math. Soc., Providence,
  RI, 2016, pp.~209--233.

\bibitem{Rie:Fourier-truncations}
\leavevmode\vrule height 2pt depth -1.6pt width 23pt, {\em Convergence of
  {F}ourier truncations for compact quantum groups and finitely generated
  groups}, J. Geom. Phys., 192 (2023), pp.~Paper No. 104921, 13.

\bibitem{walter:GH-convergence}
{\sc W.~D. {van Suijlekom}}, {\em {Gromov-Hausdorff convergence of state spaces
  for spectral truncations}}, {J. Geom. Phys.}, 162 (2021), p.~11.
\newblock Id/No 104075.

\end{thebibliography}
	
\end{document}